\documentclass[a4paper,11pt,onecolumn,twoside]{article}
\usepackage{tikz}
\usetikzlibrary{arrows.meta}
\usepackage{fancyhdr}
\usepackage{amsmath}
\usepackage{hyperref} % 全文启用无衬线字体
\sffamily 
\usepackage{bbm}
\usepackage[utf8]{inputenc}
\usepackage{amsmath,amsfonts,amssymb}
\usepackage{graphicx}
\usepackage{mathptmx}
\DeclareUnicodeCharacter{FB01}{fi}
\usepackage{amsthm}
\usepackage{booktabs}
\usepackage{amssymb}
\usepackage[mathscr]{eucal}
\usepackage[labelfont=bf]{caption}
\usepackage{indentfirst}
\usepackage{caption}
\usepackage{enumitem}
\usepackage{subfigure}
\usepackage{authblk}
\usepackage{natbib}
\usepackage[all]{xy}
\usepackage{geometry}
\usepackage{mathrsfs}
\usepackage{newtxmath}
\usepackage{hyperref}
\numberwithin{equation}{section}

\linespread{1.1}
\newtheorem{thm}{Theorem}[section]

\newtheorem{definition}[thm]{Definition}
\newtheorem{exm}[thm]{Example}

\newtheorem{lemma}[thm]{Lemma}
\newtheorem{cor}[thm]{Corollary}
\newtheorem{prop}[thm]{Proposition}

\newtheorem*{theorema}{Theorem A}
\newtheorem*{theoremb}{Theorem B}
\newtheorem*{theoremc}{Theorem C}
\newtheorem{rmk}[thm]{Remark}

\newcommand{\GG}{{^{G}_{G}\mathcal{YD}^{\Phi}}}

\newcommand{\GGA}{{^{G}_{G}\mathcal{YD}}}
\newcommand{\AAC}{{^{A}_{A}\mathcal{YD}(\mathcal{C})}}
\newcommand{\AARC}{{\mathcal{YD}^A_A}(\mathcal{C})}
\newcommand{\BBRC}{{\mathcal{YD}^B_B}(\mathcal{C})}
\newcommand{\BBC}{{^{B}_{B}\mathcal{YD}(\mathcal{C})}}

\newcommand{\HH}{{^{H}_{H}\mathcal{YD}}}
\newcommand{\BNG}{{^{\AN}_{\AN}\mathcal{YD}}}
\newcommand{\AMN}{\mathcal{A}(M\oplus N)}
\newcommand{\AN}{\mathcal{A}(N)}
\newcommand{\AMI}{{^{A(M_i)}_{A(M_i)}\mathcal{YD}}}
\newcommand{\ANI}{{^{A(N_i)}_{A(N_i)}\mathcal{YD}}}
\newcommand{\AMID}{{^{A(M_i^*)}_{A(M_i^*)}\mathcal{YD}}}

\newcommand{\VRn}{V^{\otimes \overrightarrow{n}}}

\newcommand{\NN}{\mathbb{N}_{0}}
\newcommand{\ZZ}{\mathbb{Z}}
\newcommand{\Gr}{\text{-Gr}}
\newcommand{\Mod}{\mathcal{M}}

\newcommand{\DG}{D^{\Phi}(G)}

\newcommand{\Zgr}{\mathbb{Z}\operatorname{-Gr}\mathcal{M}_{\mathbbm{k}}}
\newcommand{\BMN}{\mathcal{B}(M\oplus N)}

\newcommand{\bVG}{\textbf{Vec}_G^{\Phi}}

\newcommand{\HHp}{{^{H}_{H}\mathcal{YD}^{\phi}}}

\newcommand{\bfo}{\textbf{1}}
\newcommand{\BV}{\mathcal{B}(V)}
\newcommand{\BN}{\mathcal{B}(N)}

\usetikzlibrary{knots}
\newcommand{\kG}{\mathbbm{k}G}
\newcommand{\bB}{\mathcal{B}}

\newcommand{\wPhi}{\widetilde{\Phi}}

\hypersetup{
  colorlinks=true,
  linkcolor=magenta,
  citecolor=blue,
  filecolor=blue,
  urlcolor=blue
}

\addtolength{\topmargin}{-54pt}

% Please change the following fonts if they are not available.

\setlength{\evensidemargin}{\oddsidemargin}
\setlength{\textwidth}{17.00cm}
\setlength{\textheight}{24.50cm}

\parindent 22pt

\title{\textbf{  Reflection of Nichols Algebras over Coquasi-Hopf Algebras }}
\author{Bowen Li and Gongxiang Liu
	}
\date{}
\setlist{nolistsep}
\captionsetup{font=small}

\begin{document}\large 
	\maketitle
	\setlength{\footskip}{20pt}
	\setlength{\oddsidemargin}{ -1cm}
	\setlength{\evensidemargin}{\oddsidemargin}
	\setlength{\textwidth}{15.50cm}
	\vspace{-.8cm}
	
	\setcounter{page}{1}
	
	\setlength{\oddsidemargin}{-.6cm}  % 3.17cm - 1 inch
	\setlength{\evensidemargin}{\oddsidemargin}
	\setlength{\textwidth}{17.00cm}
		\thispagestyle{fancy}
	\fancyhf{}
	\fancyfoot[R]{\thepage}
	\fancyfoot[L]{Bowen Li, Gongxiang Liu:  School of Mathematics, Nanjing University, Nanjing, P. R. China. \\ B.Li, e-mail: DZ21210002@smail.nju.edu.cn \\ 
 G.Liu, e-mail: gxliu@nju.edu.cn.\\
 }
	\fancyhead{} 
	\renewcommand{\footrulewidth}{1pt}
	\renewcommand{\headrulewidth}{0pt}
	\par 
 \begin{abstract}
     This paper extends the foundational reflection theory of Nichols algebras to the setting of certain coquasi-Hopf algebras. Our primary motivation arises from the classification  of  pointed finite-dimensional coquasi-Hopf algebras. We develop a reflection theory for tuples of simple Yetter-Drinfeld modules in the category $\GG$, where $G$ is a finite group and $\Phi$ is a 3-cocycle on $G$. We prove that such a tuple gives rise to a semi-Cartan graph if it admits all reflections. Consequently, its Weyl groupoid is well-defined. We further establish several criteria for the finite-dimensionality of Nichols algebras in terms of the associated semi-Cartan graph. As an application, we provide a new proof for the infinite-dimensionality of a specific class of Nichols algebras previously studied in \cite{huang2024classification}, bypassing extensive computational arguments.

 \end{abstract}

\textbf{ Mathematics Subject Classification}: 16T05; 18M15; 20F55
\section{Introduction}
The classification of finite-dimensional pointed Hopf algebras has been one of the central problems in the theory of Hopf algebras over the past three decades. A crucial breakthrough in this direction was achieved through the systematic study of Nichols algebras. Heckenberger introduced the revolutionary concepts of semi-Cartan graphs and Weyl groupoids [\citealp{Hec06},\citealp{Hec09},\citealp{Hec10}], which provide powerful combinatorial tools for understanding the structure and classification of Nichols algebras.\par 
The reflection theory of Nichols algebras represents one of the most important developments in the field.
The fundamental theorem states that a tuple of simple Yetter-Drinfeld modules over a Hopf algebra with bijective antipode will give rise to a semi-Cartan graph provided that it admits all reflections [\citealp{AHS10},\citealp{HS10}]. This result was proved again using a categorical approach  in the setting of Hopf algebras in \cite{HSdualpair}.\par   
We recall that a semi-Cartan graph consists of the following data,
\begin{definition}
   Let $\mathbb{I}$ be a non-empty finite set,   \(\mathcal{X}\) a non-empty set, $r: \mathbb{I} \times \mathcal{X}\rightarrow 
   \mathcal{X}$, $A: \mathbb{I} \times \mathbb{I} \times \mathcal{X}\rightarrow \mathbb{Z}$ maps. For all $i,j \in \mathbb{I}$ and ${X} \in \mathcal{X}$ we write $r_i(X)=r(i,X)$, $a_{ij}^X=A(i,j,X)$ and $A^X=(a_{ij}^X)_{i,j \in \mathbb{I}} \in \mathbb{Z}^{\mathbb{I} \times \mathbb{I}}$. The quadruple \(\mathcal{G} = \mathcal{G}(\mathbb{I}, \mathcal{X}, (r_i), (A^X)_{X\in \mathcal{X}})\) is called a semi-Cartan graph if for all $X \in \mathcal{X}$, the matrix $A^X$ is a generalized Cartan matrix, and the following axioms hold.
\\  \textrm{$(CG1)$} For all \(i \in I\), \(r_i^2 = \mathrm{id}_{\mathcal{X}}\).
    \\ \textrm{$(CG2)$} For all \(i \in I\), \(X \in \mathcal{X}\), \(A^X\) and \(A^{r_i(X)}\) have the same \(i\)-th row.

\end{definition}

 The  structure underlying a semi-Cartan graph encodes the combinatorial data necessary to define the Weyl groupoid $\mathcal{W}(\mathcal{G})$, which parametrizes the action of reflections on the set of real roots. 
 %The concept of a Weyl groupoid generalizes the classical Weyl group to a categorical setting, where instead of a single root system, one considers a collection of interrelated root systems connected by reflection functors. This framework provides a natural generalization of the classical theory where Weyl groups act on root systems of semisimple Lie algebras.
 Given a semi-Cartan graph $\mathcal{G}=(\mathbb{I},\mathcal{X},r,(A^X)_{X\in \mathcal{X}})$, the Weyl groupoid of $\mathcal{G}$ is defined as follows.
\begin{definition}
   Let $\mathcal{D}(\mathcal{X},\operatorname{End}(\mathbb{Z}^\mathbb{I}))$  denote the category with objects $\operatorname{Ob}\mathcal{D}(\mathcal{X},\operatorname{End}(\mathbb{Z}^\mathbb{I}))=\mathcal{X}$, and morphisms 
    $$ \operatorname{Hom}(X,Y)=\{ (Y,f,X)\mid f\in \operatorname{End}(\mathbb{Z}^\mathbb{I})\}, $$
    where the composition of morphisms is defined by 
    $$(Z,g,Y)\circ (Y,f,X)=(Z,gf,X),  \ \text{for all} \  X,Y,Z \in \mathcal{X}, \ f,g \in \operatorname{End}(\mathbb{Z}^\mathbb{I}).$$
   Let $\alpha_i$, $1\leq i\leq \theta$ be the standard basis of  $\mathbb{Z}^\mathbb{I}$, and 
    $$ s_i^X \in \operatorname{Aut}(\mathbb{Z}^{\mathbb{I}}), \ s_i^X(\alpha_j)=\alpha_j-a_{ij}^X\alpha_i, \text{for all} \ j.$$
    We call the smallest subcategory of $\mathcal{D}(\mathcal{X},\operatorname{End}(\mathbb{Z}^\mathbb{I}))$ which contains all morphisms $(r_i(X),s_i^X,X)$ with $i \in \mathbb{I}$, $X \in \mathcal{X}$ the Weyl groupoid of $\mathcal{G}$, denoted by $\mathcal{W}(\mathcal{G})$.
\end{definition}
 In the context of Nichols algebras, the Weyl groupoid captures the symmetries and reflection properties of the underlying braided vector space, establishing a bridge between the algebraic structure of Nichols algebras and the combinatorics of root systems [\citealp{AHS10}].\par 
The interplay between Weyl groupoid theory and reflection theory has led to spectacular progress in the classification of finite-dimensional Nichols algebras. The classification program, initiated by Andruskiewitsch and Schneider in a series of  papers [\citealp{AS98},\citealp{AS00},\citealp{AS02},\citealp{AS10}], aims to determine all finite-dimensional pointed Hopf algebras over algebraically closed fields of characteristic zero whose group of group-like elements is abelian. A cornerstone of this classification is Heckenberger's complete classification of finite-dimensional Nichols algebras of diagonal type [\citealp{Hec09}]. Building on this classification, Angiono made a breakthrough contribution by providing explicit presentations by generators and relations for all Nichols algebras of diagonal type [\citealp{A13},\citealp{A15}]. The lifting problem of Nichols algebras of diagonal type has been addressed through cocycle deformation [\citealp{AA+14},\citealp{AG19},\citealp{AA17}]. The classification of finite-dimensional Nichols algebras over non-abelian groups  has achieved excellent progress as well [\citealp{AGn99},\citealp{AGn03},\citealp{AHS10},\citealp{MS00},\citealp{FK99},\citealp{Gn00},\citealp{Gn+11},\citealp{HLV12},\citealp{HS10},\citealp{HS10b},\citealp{HS10c},\citealp{HS14},\citealp{Hs15},\citealp{HV17}].\par 
%Recent applications of Nichols algebra theory extend beyond Hopf algebra classification to diverse areas including Schubert calculus on Coxeter groups [Baz06, Bär19], quasi-quantum groups [\citealp{FQQR2},\citealp{QQG},\citealp{huang2024classification}], representation theory of vertex operator algebras [ST12, ST13, FL18, FL19], and even number theory [ETW17, KS19]. The theory continues to develop with ongoing research on Nichols algebras in positive characteristic [CLW09], computational methods for determining Gröbner bases [Hec06], connections to simplicial arrangements and Cartan graphs [BC12, CH15, CMW17, CL17], and applications to tensor categories [And14]. 
In this paper, we extend the reflection theory to the setting of some special coquasi-Hopf algebras. Our motivation comes from the classification of coradically graded pointed finite-dimensional coquasi-Hopf algebras, which has been studied in [\citealp{rank1},\citealp{coquasitriangular},\citealp{tame-rep},\citealp{FQQR2},\citealp{QQG},\citealp{huang2024classification},\citealp{LiLiu1}].
 %Depending on the type of the associated  Nichols algebra, the classification falls into two cases: diagonal type and non-diagonal type. For  Nichols algebras of diagonal type, the authors of [\citealp{QQG}] showed that they are always isomorphic to an ordinary Nichols algebra in the category of Yetter-Drinfeld modules over a Hopf algebra, thereby allowing the application of Heckenberger’s classification of finite arithmetic root systems \cite{QQG}. \par
In the classification of pointed Hopf algebras over abelian groups, Nichols algebras of non-diagonal type do not appear. However, in the coquasi-Hopf algebra setting,  Nichols algebras of non-diagonal type do occur and present greater challenges. The authors in \cite{huang2024classification} used extensive computations to establish that a certain class of such  Nichols algebras is infinite-dimensional. We later provided an alternative proof of their main result via categorical Morita equivalence 
 \cite{LiLiu1}.  Nonetheless, these arguments are not entirely natural, and we hope that the theory of semi-Cartan graphs can be extended to coquasi-Hopf algebras. If such a generalization holds, it could be applied to the classification of finite-dimensional coquasi-Hopf algebras over non-abelian groups.\par 
 Our approach combines the methods of \cite{Simon} and \cite{HSdualpair}. The next theorem generalizes [\citealp{HSdualpair}, Theorem 7.1] to the case of coquasi-Hopf algebras with bijective antipodes under some minor conditions, which is crucial to the description of the reflection of Nichols algebras.
\begin{theorema} \textup{({Theorem} \ref{C-cor3.8})}
    Let $H$ be a coquasi-Hopf algebra with bijective antipode, and $\mathcal{C}=\HH$, (see Definition \ref{C-def2.5} for details). Suppose $N\in \HH$, and $\bB(N)$ is finite-dimensional. We have the following equivalence of braided monoidal categories 
 \begin{align*}
  \Omega:{^{\bB(N)}_{\bB(N)}\mathcal{YD}(\mathcal{C})}& \cong {^{\bB(N^*)}_{\bB(N^*)}\mathcal{YD}(\mathcal{C})}.
 \end{align*}
\end{theorema}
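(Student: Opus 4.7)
The plan is to adapt the duality construction of \cite[Theorem 7.1]{HSdualpair} to the non-strict braided setting of $\mathcal{C}=\HH$, using the associator bookkeeping techniques of \cite{Simon} to handle the reassociator of $\mathcal{C}$ inherited from $\Phi$. The guiding principle is that finite-dimensionality of $\bB(N)$ forces $\bB(N^*)$ to play the role of its graded dual in $\mathcal{C}$, so that the evaluation $N\otimes N^*\to\mathbbm{k}$ extends via the universal property of Nichols algebras to a non-degenerate Hopf pairing $\langle-,-\rangle\colon \bB(N)\otimes\bB(N^*)\to\mathbbm{k}$ in $\mathcal{C}$. Using this pairing, the functor $\Omega$ fixes the underlying object in $\mathcal{C}$ and swaps action with coaction: the $\bB(N)$-action on $M$ is transported to a $\bB(N^*)$-coaction via dual bases, and the $\bB(N)$-coaction on $M$ is transported to a $\bB(N^*)$-action via evaluation against the pairing.

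First I would construct and verify non-degeneracy of the Hopf pairing between $\bB(N)$ and $\bB(N^*)$ in $\mathcal{C}$; this is the coquasi analogue of the classical Nichols algebra duality and requires extending the primitive pairing through the braided multiplication and comultiplication while tracking the associator at every non-associative reassembly. Second, I would check that $\Omega(M)$ satisfies the Yetter-Drinfeld compatibility axiom over $\bB(N^*)$ in $\mathcal{C}$; dualizing through the pairing should reduce this to the YD axiom for $M$ over $\bB(N)$, modulo a web of associators that collapses by the pentagon. Third, I would equip $\Omega$ with a monoidal constraint $\Omega(M\otimes P)\cong \Omega(M)\otimes \Omega(P)$ built from the braiding of $\mathcal{C}$, verify the monoidality hexagons, and observe that the resulting braidings on both sides agree since both are recovered, after dualization, from the same underlying braiding of $\mathcal{C}$. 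A symmetric construction using the pairing of $\bB(N^*)$ with $\bB(N)\cong \bB(N^*)^{\ast}$ produces a quasi-inverse, yielding the equivalence.

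The main obstacle is the associator itself: every diagrammatic identity in \cite{HSdualpair} that implicitly assumes strictness must be redrawn with an associator at each non-associative composition, and the resulting equalities must be extracted from the pentagon, the compatibility of the Hopf pairing with $\Phi$, and the $3$-cocycle condition on $\Phi$. I expect the tightest step to be the verification of the Yetter-Drinfeld compatibility for $\Omega(M)$, where two distinct routes through a six-fold tensor product must be identified; the diagrammatic strategy of \cite{Simon} is tailored precisely to reduce such identities to the Hopf pairing compatibility, and this is where I would concentrate the technical work.
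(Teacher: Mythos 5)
Your proposal is correct in outline and shares the paper's central idea --- finite-dimensionality of $\bB(N)$ yields a non-degenerate Hopf pairing between $\bB(N^*)$ and $\bB(N)$, and the equivalence is the transport of Yetter--Drinfeld structures (partial dualization) along this pairing --- but your execution plan diverges from the paper's in a way that costs you essentially all of the work you describe. The paper never redraws the diagrams of \cite{HSdualpair} with associators: it quotes the partial-dualization theorem of \cite{Simon} (Theorem \ref{thm2.6} here, [\citealp{Simon}, Theorem 3.20]), which is already stated and proven for an arbitrary, non-strict braided monoidal category, so the entire verification you flag as the ``tightest step'' (Yetter--Drinfeld compatibility for $\Omega(M)$, monoidality, braiding, and the quasi-inverse via the flipped pairing $\omega^-$ of Remark \ref{rmk2.7}) is a citation, not a computation. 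The only genuinely new content in the paper's proof is Lemma \ref{lem3.7}: because $\bB(N)$ is finite-dimensional it is rigid, so by Remark \ref{rmk2.3}(3) the dual object $\bB(N)^{*\operatorname{op,cop}}$ is automatically a Hopf algebra in $\HH$ with $\operatorname{ev}$ a non-degenerate Hopf pairing, and one then identifies $\bB(N)^{*\operatorname{op,cop}}\cong\bB(N^*)$ by the universal property of the Nichols algebra (giving a surjection) together with the graded-dual dimension count of [\citealp{defofnichols}, Lemma 2.6]. Your alternative construction of the pairing --- extending the degree-one evaluation through the universal property --- can be made to work, but it leaves non-degeneracy as something you must prove (the radical of the pairing is a graded Hopf ideal meeting degree one trivially), whereas the paper's rigidity route makes non-degeneracy automatic, with $\operatorname{coev}$ as the inverse copairing.

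One concrete warning about your second step: the functor is not the naive ``swap action and coaction through the pairing'' while keeping left-left conventions on both sides. In Theorem \ref{thm2.6} the transported action involves $\omega\circ(\operatorname{id}\otimes(\mathcal{S}^{-1})^2)$ and the transported coaction involves a double braiding composed with $\mathcal{S}^2$; as Remark \ref{rmk2.7}(3) explains, the raw pairing-transport $D$ lands in \emph{right-right} Yetter--Drinfeld modules, and the antipode-square corrections implement the passage $T$ back to left-left modules. Without these corrections the left-left Yetter--Drinfeld axiom for $\Omega(M)$ fails even for ordinary Hopf algebras, so if you pursue the hands-on route you should build them into the definition from the start rather than hope the associator web closes on its own.
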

It is worth mentioning that in our cases, the above two categories are not strict. This makes the definition of the functor $\Omega$ more complicated and adds some challenges to studying its properties.\par 
Now we restrict to the case of pointed cosemisimple coquasi-Hopf algebras.
Let $G$ be a finite group, and $\Phi$ be a $3$-cocycle on $G$, which gives rise to the Yetter-Drinfeld module category $\GG$ \cite{QQG}.  Let $\theta \geq 1$ be a positive integer, $\mathbb{I}=\{1,2,..,\theta\}$ and $M = (M_1, \ldots, M_{\theta})$, where $M_1, \ldots, M_{\theta} \in \GG$ are  irreducible Yetter-Drinfeld modules. We always denote $\bB(M)$ as the Nichols algebra of $M_1\oplus \cdots \oplus M_{\theta}$ in the following context.  Suppose $M$ admits the $i$-th reflection for
some $1\leq i \leq \theta$ (see Definition \ref{C-def5.1}). Then the $i$-th reflection of $M$ is denoted by $R_i(M)=(V_1,\ldots,V_\theta)$, which is a tuple consisting of finite-dimensional irreducible Yetter-Drinfeld modules. There is a natural explanation  of reflection of Nichols algebra in terms of $\Omega$:
\begin{theoremb}\textup{(Theorem \ref{C-thm 3.15})}
   Under the above assumptions on M, suppose $M$ admits the $i$-th reflection for $1\leq i \leq \theta$, and $\bB(M_i)$ is finite-dimensional. Then there is an isomorphism of Hopf algebras in $\GG$:
     \begin{equation}
         \Theta_i:\bB(R_i(M))\cong \Omega_i\left(\bB(M)^{\operatorname{co}\bB(M_i)}\right)\# \bB(M_i^*).
     \end{equation}
\end{theoremb}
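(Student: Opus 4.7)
The plan is to combine a Radford-style bosonization decomposition of $\bB(M)$ with the monoidal equivalence $\Omega_i$ furnished by Theorem A, and then invoke the universal property of Nichols algebras. Since $\bB(M_i)$ sits inside $\bB(M)$ as a Hopf subalgebra in $\GG$, I would first set $K:=\bB(M)^{\operatorname{co}\bB(M_i)}$ and use the standard splitting argument (adapted to the coquasi-Hopf setting, which is where the associator $\Phi$ has to be carried through carefully) to obtain $K$ as a braided Hopf algebra in ${^{\bB(M_i)}_{\bB(M_i)}\mathcal{YD}}(\GG)$ together with an isomorphism $\bB(M)\cong K\#\bB(M_i)$ of Hopf algebras in $\GG$.

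Next I would apply $\Omega=\Omega_i$ to $K$, producing a braided Hopf algebra $\Omega(K)$ in ${^{\bB(M_i^*)}_{\bB(M_i^*)}\mathcal{YD}}(\GG)$; bosonizing again yields the right-hand side $\Omega(K)\#\bB(M_i^*)$ as a Hopf algebra in $\GG$. To identify it with $\bB(R_i(M))$, I would proceed in three sub-steps: (a) show that $K$ is $\NN$-graded and strictly generated in degree one, with primitives (in the category ${^{\bB(M_i)}_{\bB(M_i)}\mathcal{YD}}(\GG)$) equal to $\bigoplus_{j\neq i} V_j'$, where $V_j'$ is built from the braided adjoint action of $\bB(M_i)$ on $M_j$; this is the analogue in our setting of the primitive-generation statement used in \cite{HSdualpair}. (b) Match $\Omega(V_j')$ with the component $V_j$ of $R_i(M)$ (for $j\neq i$) from Definition \ref{def5.1}, and note that the factor $\bB(M_i^*)$ already contributes $V_i=M_i^*$. (c) Conclude via the universal property that $\Omega(K)$ is the Nichols algebra of $\bigoplus_{j\neq i}\Omega(V_j')$ in ${^{\bB(M_i^*)}_{\bB(M_i^*)}\mathcal{YD}}(\GG)$, so that $\Omega(K)\#\bB(M_i^*)$ is the Nichols algebra in $\GG$ of $V_1\oplus\cdots\oplus V_\theta$, which is $\bB(R_i(M))$. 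The isomorphism $\Theta_i$ is then the composite of these canonical identifications.

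The main obstacle will be step (a)-(b): translating the classical identification of the primitive elements of $K$ with the reflected generators $V_j$ into the non-strict coquasi-Hopf context. Both the Radford decomposition and the functor $\Omega$ involve nontrivial associators coming from $\pi^*(\Phi)$, and the relevant braided commutators, coactions, and duality pairings must be tracked through these coherence isomorphisms. Once this identification of primitives on both sides of $\Omega$ is done consistently with Definition \ref{def5.1}, the universality of Nichols algebras forces the isomorphism $\Theta_i$, and no further computation is needed.
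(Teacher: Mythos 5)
Your skeleton coincides with the paper's: write $\bB(M)\cong K\#\bB(M_i)$ with $K=\bB(M)^{\operatorname{co}\bB(M_i)}$, identify $K$ with the Nichols algebra of $Q=\bigoplus_{j\neq i}\operatorname{ad}\bB(M_i)(M_j)$ (this is exactly the paper's Theorem \ref{thm3.7}), transport through $\Omega_i$ using that a braided monoidal equivalence sends Nichols algebras to Nichols algebras (Lemma \ref{lem2.18}), and bosonize by $\bB(M_i^*)$. But two of your steps do not hold as stated. In (b), with $V_j'=\operatorname{ad}\bB(M_i)(M_j)$, the object $\Omega_i(V_j')$ is \emph{not} the component $V_j$ of $R_i(M)$: the functor $\Omega_i$ does not change underlying vector spaces, so $\dim \Omega_i(V_j')=\dim V_j'>\dim V_j$ whenever $m_{ij}^M>0$. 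The correct statement, proved in the lemma preceding Theorem \ref{thm 3.15}, is that $\Omega_i(V_j')=\operatorname{ad}\bB(M_i^*)(V_j)$ and that only its coinvariant part satisfies $\mathcal{F}^0\bigl(\Omega_i(V_j')\bigr)=\mathcal{F}_0(V_j')=V_j$; this identification is not automatic, but rests on the exchange properties $\mathcal{F}_n\Omega_i=\mathcal{F}^n$ and $\mathcal{F}^n\Omega_i=\mathcal{F}_n$ of Lemma \ref{lemOmega}, the irreducibility of $V_j'$ in $\AMI$ (Lemma \ref{irrlem}), and the $\ZZ$-graded structure theory of Lemma \ref{lem 3.10}.

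The more serious gap is the final claim of (c), that universality forces the isomorphism and ``no further computation is needed.'' The natural degree-one part of $\Omega_i(K)\#\bB(M_i^*)$ is $\Omega_i(Q)$, not $\bigoplus_{j\neq i}V_j$, so this bosonization is not even a pre-Nichols algebra of $R_i(M)$ until one regrades so that each $V_j$ sits in degree one and then proves, by an induction that must carry the associator, that the regraded algebra is generated by $\bigoplus_{j\neq i}V_j\oplus M_i^*$. Even granting this, the universal property produces only a surjection $\Omega_i(K)\#\bB(M_i^*)\twoheadrightarrow\bB(R_i(M))$; injectivity is the substantive content and is not formal. This is precisely the paper's Proposition \ref{prop3.11}, whose proof needs the semisimplicity of $\Omega_i(Q)$ in $\AMID(\Zgr)$ (which you never invoke, and which is where irreducibility of the $M_j$ enters via Lemma \ref{irrlem}), a second application of Theorem \ref{thm3.7} to the target $\bB(R_i(M))$, and the observation that the induced comparison $\bB(M_i^*)\rhd V_j\to\operatorname{ad}\bB(M_i^*)(V_j)$ is a surjection between irreducible objects, hence an isomorphism. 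A dimension count cannot substitute for this, since $\bB(M)$ is not assumed finite-dimensional. Without supplying Proposition \ref{prop3.11} or an equivalent, your argument yields only a surjection onto $\bB(R_i(M))$, not the asserted isomorphism $\Theta_i$.
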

Readers who are familiar with reflection theory in the Hopf algebra context will recognize this formula, we emphasize that our contribution lies in establishing this result in  the framework of coquasi-Hopf algebras. The extension from Hopf algebras to coquasi-Hopf algebras introduces  technical challenges, particularly in Sections 3 and 4 where the non-strict monoidal structure necessitates a  reworking of the reflection theory.with the help of the above theorems, our main result follows naturally.
\begin{theoremc}\textup{(Theorem \ref{C-thm5.9})}
Suppose $M$ admits all reflections and 
    let $\mathcal{X}=\{ [P]\mid P \in \mathcal{F}_{\theta}(M) \}$, see Definition \ref{C-def5.2} for related notations. We denote the generalized Cartan matrix $A^{[X]}=(a_{ij}^X)_{i,j \in \mathbb{I}}$ for each $[P] \in \mathcal{X}$, where $(a_{ij}^X)_{i,j \in \mathbb{I}}$ is defined in Lemma \ref{C-lem5.4}. Let $r$ be the following map, $$r: \mathbb{I} \times \mathcal{X} \rightarrow \mathcal{X},\ i \times X \mapsto [R_i(X)].$$ Furthermore, we assume $\bB(P_i)$ is finite-dimensional for each $P \in \mathcal{X}$ and $1\leq i \leq \theta$, then 
    $$ \mathcal{G}(M)=(\mathbb{I},\mathcal{X},r,(A^X)_{X\in \mathcal{X}}),$$
   is a semi-Cartan graph.
\end{theoremc}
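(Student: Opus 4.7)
The plan is to verify directly the three defining axioms of a semi-Cartan graph: that each $A^X$ is a generalized Cartan matrix, axiom (CG1) asserting $r_i^2 = \operatorname{id}_{\mathcal{X}}$, and axiom (CG2) asserting that $A^X$ and $A^{r_i(X)}$ share their $i$-th row. The first axiom should follow at once from the definition of the entries $a_{ij}^X$ in Lemma \ref{lem5.4}: by construction $a_{ii}^X = 2$, and for $j \neq i$ the entry $a_{ij}^X$ is a non-positive integer recording the maximal $m$ such that the $m$-fold braided adjoint action of $P_i$ on $P_j$ inside $\bB(P)$ is nonzero. The symmetry $a_{ij}^X = 0 \Longleftrightarrow a_{ji}^X = 0$ should follow from the rigidity of the ambient category $\GG$ and the duality between $P_j$ and $P_j^*$ in the Nichols algebra.

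For (CG1), the strategy is to iterate Theorem B. A first application produces $R_i(M) = (V_1,\ldots,V_\theta)$ with $V_i = M_i^*$ and
\begin{equation*}
\bB(R_i(M)) \cong \Omega_i\bigl(\bB(M)^{\operatorname{co}\bB(M_i)}\bigr) \# \bB(M_i^*).
\end{equation*}
A second application of Theorem B, together with the fact that Theorem A realizes $\Omega$ as a braided monoidal equivalence whose square is naturally isomorphic to the identity via the canonical identification $(M_i^*)^* \cong M_i$, should yield $\bB(R_i(R_i(M))) \cong \bB(M)$ together with an isomorphism of tuples. Since the equivalence class $[\cdot]$ in $\mathcal{X}$ is defined modulo such isomorphisms, this gives $r_i \circ r_i = \operatorname{id}_{\mathcal{X}}$.

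For (CG2), I would compare the $i$-th rows of $A^X$ and $A^{r_i(X)}$ entry by entry. The diagonal entries are $2$ in both. For $j \neq i$, $a_{ij}^X$ measures the length of the $M_i$-string through $M_j$ inside $\bB(M)$, while $a_{ij}^{r_i(X)}$ measures the length of the $M_i^*$-string through $V_j$ inside $\bB(R_i(M))$. Using the explicit description of $V_j$ furnished by Theorem B --- namely that $V_j$ arises as the top of the $M_i$-string on $M_j$ inside $\Omega_i(\bB(M)^{\operatorname{co}\bB(M_i)})$ --- a direct computation should show that these two string lengths coincide, at which point both (CG1) and the $i$-th row invariance in (CG2) are established.

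The main obstacle will be the non-strict monoidal structure induced by the $3$-cocycle $\Phi$: the functor $\Omega_i$ and the identification $(M_i^*)^* \cong M_i$ are only available up to the associator, so tracking the cocycle corrections through the double reflection of (CG1) and the string-length comparison of (CG2) is delicate. Once one verifies that these associator discrepancies cancel after two applications of $\Omega$, the axioms fall into place and $\mathcal{G}(M)$ is a semi-Cartan graph.
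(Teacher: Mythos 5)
Your overall skeleton (generalized Cartan matrix imported from Lemma \ref{lem5.4}, then (CG1) and (CG2) deduced from the reflection machinery of Theorem \ref{thm 3.15}) matches the paper, and your treatment of (CG2) is essentially the paper's: in Corollary \ref{cor5.8} the identity $a_{ij}^{R_i(M)}=a_{ij}^{M}$ is read off from $\operatorname{ad}\bB(M_i^*)(V_j)=\Omega_i\bigl(\operatorname{ad}\bB(M_i)(M_j)\bigr)$, i.e.\ the $M_i^*$-string through $V_j$ is the image under $\Omega_i$ of the $M_i$-string through $M_j$ and hence has the same length.

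Your route to (CG1), however, has a genuine gap. You propose to apply Theorem \ref{thm 3.15} twice and invoke the quasi-inverse property of the dualization functors to get $\bB(R_i^2(M))\cong\bB(M)$, and then conclude $R_i^2(M)\cong M$. Two things are missing. First, an isomorphism of Nichols algebras only yields an isomorphism of the degree-one (primitive) parts $\bigoplus_j R_i^2(M)_j\cong\bigoplus_j M_j$ in $\GG$; since the components $M_j$ need not be pairwise non-isomorphic, this does not by itself produce the component-wise isomorphisms $R_i^2(M)_j\cong M_j$ that the definition of isomorphism of tuples (and hence of $[\,\cdot\,]$ in $\mathcal{X}$) requires. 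Second, the statement $\Omega^{\omega^-}\circ\Omega^{\omega}\cong\operatorname{id}$ of Remark \ref{rmk2.7}(2) concerns the pair $(\omega,\omega^-)$ on one fixed pair of Hopf algebras, whereas your second reflection uses the pairing of $\bB(M_i^{**})$ with $\bB(M_i^*)$; identifying this with $(\operatorname{ev}_{\bB(M_i)})^-$ under $(M_i^*)^*\cong M_i$ is exactly the ``associator discrepancy'' you defer at the end, so the core of (CG1) is left unproved. The paper's proof deliberately avoids both issues: Corollary \ref{cor5.8} never composes two dualization functors, but computes each component of $R_i^2(M)$ directly using the filtration-exchange property of $\Omega_i$ (Lemma \ref{lemOmega}, which gives $\mathcal{F}_0\circ\Omega_i=\mathcal{F}^0$), namely $R_i^2(M)_j=\mathcal{F}_0\bigl(\operatorname{ad}\bB(M_i^*)(V_j)\bigr)=\mathcal{F}^0\bigl(\operatorname{ad}\bB(M_i)(M_j)\bigr)=M_j$ for $j\neq i$, and $R_i^2(M)_i=(M_i^*)^*\cong M_i$ by finite-dimensionality. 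Replacing your double-dualization step by this component-wise computation closes the argument.
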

After establishing that $M$ gives rise to a semi-Cartan graph, several additional criteria for determining the finite-dimensionality of the Nichols algebra can also be derived. We show that if $\mathcal{B}(M)$ is finite-dimensional,  then the semi-Cartan graph $\mathcal{G}(M)$ is a finite semi-Cartan graph. Furthermore, if $\mathcal{G}(M)$ is         a standard finite semi-Cartan graph, that is $A^X=A^Y$, for all $X,Y \in \mathcal{X}$, then $A^M $ must be a finite Cartan matrix. 
As an application of these results, we provide a third proof of [\citealp{huang2024classification}, Proposition 4.1].\par 
The paper is organized as follows. Section 2 is dedicated to preliminaries. In Section 3, we establish a braided monoidal equivalence between categories of Yetter-Drinfeld modules. Section 4 deals with projections of Nichols algebras; the results obtained here are essential to the study of reflections of Nichols algebras. In Section 5, we prove that, under minor assumptions, a tuple of simple Yetter-Drinfeld modules gives rise to a semi-Cartan graph. Finally, in Section 6, we apply the theory of semi-Cartan graph to the classification of finite-dimensional Nichols algebras over certain coquasi-Hopf algebra.

 \section{Preliminaries}
Throughout this paper, $\mathbbm{k}$ is an algebraically closed field of characteristic zero and all linear spaces are over $\mathbbm{k}$.

\subsection{Coquasi-Hopf algebras}
By definition, coquasi-Hopf algebras are exactly the dual  of Drinfeld's quasi-Hopf algebras \cite{Drinfeld}. Their formal definition can be given as follows.

\begin{definition}
    A coquasi-Hopf algebra is a coalgebra $(H, \Delta, \varepsilon)$ equipped with a compatible quasi-algebra structure and an antipode $(\mathcal{S},\alpha, \beta)$. Namely, there exist:
\begin{itemize}
    \item Two coalgebra homomorphisms:
    \[
    m : H \otimes H \rightarrow H, \quad a \otimes b \mapsto ab,
    \]
    \[
    \mu : \mathbbm{k} \rightarrow H, \quad \lambda \mapsto \lambda 1_{H},
    \]
    
    \item A convolution-invertible map $\Phi : H^{\otimes 3} \rightarrow \mathbbm{k}$ called an \textit{associator},
    
    \item A coalgebra antimorphism $\mathcal{S} : H \rightarrow H$,
    
    \item Two linear functions $\alpha, \beta : H \rightarrow \mathbbm{k}$
\end{itemize}
such that for all $a, b, c, d \in H$ the following equalities hold:
\begin{align}
    a_1(b_1c_1)\Phi(a_2, b_2, c_2) &= \Phi(a_1, b_1, c_1)(a_2b_2)c_2, \\
    1_{H}a &= a = a1_{H}, \\
    \Phi(a_1, b_1, c_1d_1)\Phi(a_2b_2, c_2, d_2) &= \Phi(b_1, c_1, d_1)\Phi(a_1, b_2c_2, d_2)\Phi(a_2, b_3, c_3), \\
    \Phi(a, 1_{H}, b) &= \varepsilon(a)\varepsilon(b), \\
    \mathcal{S}(a_1)\alpha(a_2)a_3 &= \alpha(a)1_{H}, \quad a_1\beta(a_2)\mathcal{S}(a_3) = \beta(a)1_{H}, \\
    \Phi(a_1, \mathcal{S}(a_3), a_5)\beta(a_2)\alpha(a_4) &= \Phi^{-1}(\mathcal{S}(a_1), a_3, \mathcal{S}(a_5))\alpha(a_2)\beta(a_4) = \varepsilon(a).
\end{align}
\end{definition}

Throughout this paper, we use the Sweedler sigma notation $\Delta(a) = a_1 \otimes a_2$ for the coproduct and $a_1 \otimes a_2 \otimes \cdots \otimes a_{n+1}$ for the result of the $n$-iterated application of $\Delta$ on $a$. We say $H$ has a bijective antipode if $\mathcal{S}$ is bijective.

A coquasi-Hopf algebra $H$ is pointed if the underlying coalgebra is so. Given a  coquasi-Hopf algebra, let $\{ H_n \}_{n \geq 0}$ be its coradical filtration, and $$ \operatorname{gr}(H):=H_0 \oplus H_1/H_0 \oplus  H_2/H_1 \oplus \cdots $$ the corresponding coradically graded coalgebra. Then naturally $(\operatorname{gr}(H),\operatorname{gr}(\Phi))$ inherits from $H$  a graded coquasi-Hopf algebra structure. Furthermore, for homogeneous elements $a,b,c \in \operatorname{gr}(H)$, $\operatorname{gr}(\Phi)(a,b,c)= 0$ unless they all lie in $H_0$.\par
Following the definition in [\citealp{preantipode}], we now recall the definition of a preantipode, plays an important role in subsequent calculations.
\begin{definition}\textup{[\citealp{preantipode}, Definition 3.6]}
    Let $(H,\Phi)$ be a coquasi-bialgebra, a preantipode for $H$ is a $\mathbbm{k}$-linear map $\mathbb{S}:H \rightarrow H$ such that, for all $h \in H$,
    \begin{align*}
        \mathbb{S}(h_1)_1h_2\otimes \mathbb{S}(h_1)_2&=1_H \otimes \mathbb{S}(h),\\
        \mathbb{S}(h_2)_1\otimes h_1\mathbb{S}(h_2)_2&=\mathbb{S}(h)\otimes 1_H,\\
\Phi(h_1,\mathbb{S}(h_2),h_3)&=\varepsilon(h).
    \end{align*}
\end{definition}
Furthermore, [\citealp{preantipode}, Remark 3.7] provides us  a useful property:
\begin{equation}
    h_1\mathbb{S}(h_2)=\varepsilon(\mathbb{S}(h))1_H=\mathbb{S}(h_1)h_2, \ \text{for all} \ h \in H.
\end{equation}
Meanwhile, when $H$ is equipped with the structure of a coquasi-Hopf algebra with an antipode $(\mathcal{S}, \alpha, \beta)$, then preantipode for $H$ always exists.
\begin{lemma}\textup{[\citealp{preantipode}, Theorem 3.10]}\label{C-lem1.3}
Let $H$ be a coquasi-Hopf algebra with an  antipode $(\mathcal{S}, \alpha, \beta)$, then 
\begin{equation}
    \mathbb{S}:= \beta * \mathcal{S} *\alpha 
\end{equation}
is a preantipode for $H$, where $*$ denotes convolution product. 
\end{lemma}
\begin{exm}\rm
    A baby example of coquasi-Hopf algebra is  $(\mathbbm{k}G,\Phi)$, where $G$ is a group and $\Phi$ a normalized 3-cocycle on $G$. It is well known that the group algebra $\mathbbm{k}G$ is a Hopf algebra with:
\[
\Delta(g) = g \otimes g, \quad \mathcal{S}(g) = g^{-1}, \quad \varepsilon(g) = 1 \quad \text{for any } g \in G.
\]
By extending $\Phi$ linearly, $\Phi : (\mathbbm{k}G)^{\otimes 3} \rightarrow \mathbbm{k}$ becomes a convolution-invertible map. Define two linear functions $\alpha, \beta : \mathbbm{k}G \rightarrow \mathbbm{k}$ by:
\[
\alpha(g) := \varepsilon(g), \quad \beta(g) := \frac{1}{\Phi(g, g^{-1}, g)}
\]
for any $g \in G$. Then $\mathbbm{k}G$ together with these $\Phi, \alpha$ and $\beta$ becomes a coquasi-Hopf algebra. We denote this resulting coquasi-Hopf algebra by $(\mathbbm{k}G, \Phi)$.\label{C-exm1.2} For all $g\in G$, we define 
    $$\mathbb{S}(g)=\frac{1}{\Phi(g,g^{-1},g)}g^{-1}. $$
    Then Lemma \ref{C-lem1.3} shows $\mathbb{S}$ is a preantipode for $(\mathbbm{k}G,\Phi)$.
\end{exm}

 \subsection{ Yetter-Drinfeld module categories over coquasi-Hopf algebras}
 We now turn our attention to the Yetter-Drinfeld module category structure.
 The definition of the Yetter-Drinfeld module category $\HH$ over an arbitrary coquasi Hopf-algebra $H$ was already given in [\citealp{YD-module}]. It is defined as the center $\mathcal{Z}({{^H}\mathscr{M}})$ of the comodule category ${^H\mathscr{M}}$. Hence it is braided monoidal equivalent to $\textbf{Rep}(D(H))$. Since it plays a crucial role in our paper, we recall this definition below.
 \begin{definition}\textup{[\citealp{YD-module}, Definition 3.1]}\label{C-def2.5}
Let $H$ be a coquasi-Hopf algebra with associator $\Phi$. A left-left Yetter--Drinfeld module over $H$ is a triple $(V, \delta_V, \rhd)$ such that:

\begin{itemize}
    \item $(V, \delta_V)$ is a left comodule of $H$ and we denote $\delta_V(v)$ by $v_{-1} \otimes v_0$ as usual;
    
    \item $\rhd: H \otimes V \to V$ is a $\mathbbm{k}$-linear map satisfying for all $h, l \in H$ and $v \in V$:
    \begin{align}
   & (hl) \rhd v = \frac{\Phi(h_2, (l_2 \rhd v_0)_{-1}, l_3)}{\Phi(h_1, l_1, v_{-1})\Phi((h_3 \rhd (l_2 \rhd v_0)_0)_{-1}, h_4, l_4)} 
    (h_3 \rhd (l_2 \rhd v_0)_0)_0, \\
  &  1_H \rhd v = v, \\ 
  & (h_1 \rhd v)_{-1} h_2 \otimes (h_1 \rhd v)_0 = h_1 v_{-1} \otimes (h_2 \rhd v_0). \label{C-1.9}
    \end{align}
\end{itemize}
A morphism $f:(V,\delta_V,\rhd)\rightarrow (V',\delta_{V'},\rhd')$ is a colinear map $f:(V,\delta_V)\rightarrow (V',\delta_{V'})$ such that $f(h \rhd v)=h \rhd' f(v)$ for all $h\in H$.
\end{definition}
 The category $\HH$ is a $\mathbbm{k}$-linear  braided monoidal  abelian category over the field $\mathbbm{k}$. The unit object of $\HH$ is $\mathbbm{k}$, which is regarded as an object in $\HH$ via trivial structures. For $V,W \in \HH$, the tensor product of Yetter-Drinfeld modules is defined by:
\[
(V, \delta_V, \rhd) \otimes (W, \delta_W, \rhd) = (V \otimes W, \delta_{V \otimes W}, \rhd),
\]
where  $\delta_{V \otimes W}$ is given by 
\[
\delta_{V \otimes W} (v \otimes w) = v_{-1} w_{-1} \otimes v_0 \otimes w_0,
\]
and
\begin{equation}\label{C-1.12}
h \rhd (v \otimes w) =
\frac{\Phi (h_1, v_{-1} , w_{-2})\Phi((h_2\rhd v_0)_{-1}, (h_4 \rhd w_0 )_{-1}, h_5)}{\Phi ((h_2 \rhd v_0)_{-2}, h_3, w_{-1}) }
  (h_2 \rhd v_0)_0 \otimes (h_4 \rhd w_0)_0.
\end{equation}
The braiding $
c_{V,W} : V \otimes W \rightarrow W \otimes V
$
is given by:
\begin{equation}
c_{V,W} (v \otimes w) = (v_{-1} \rhd w) \otimes v_0.
\end{equation}\par 
It is worth noting that if $H$ has a bijective antipode, then equation (\ref{C-1.9}) has an equivalent form, which will prove useful in later calculations.
In order to simplify notation, we recall the following linear maps 
\( p \), \( q \), \( s \), \( t \)\(\in \operatorname{Hom}(H\otimes H, \mathbbm{k}) \), which is introduced in \cite{rightyd}.
\[
\begin{aligned}
p(h, g) &= \Phi^{-1}(h, g_1 \beta(g_2), \mathcal{S}(g_3)), \\
q(h, g) &= \Phi(h, g_3, \alpha (\mathcal{S}^{-1}(g_2)) \mathcal{S}^{-1}(g_1)), \\
s(h,g)&=\Phi(\mathcal{S}^{-1}(h_3)\beta(\mathcal{S}^{-1}(h_2)),h_1,g),\\
t(h,g)&=\Phi^{-1}(\mathcal{S}(h_1)\alpha(h_2),h_3,g).
\end{aligned}
\]
for any \( h, g \in H \).
These maps satisfy the following properties:
\begin{align}\label{C-1.14}
(h_1 g_1) \mathcal{S}(g_3) p(h_2, g_2) &= h_2 p(h_1, g), \\
(h_2 g_3) \mathcal{S}^{-1}(g_1) q(h_1, g_2) &= h_1 q(h_2, g),\label{C-1.15}\\ \label{C-1.16}
q(h_1 g_1, \mathcal{S}(g_3)) p(h_2, g_2) &= \varepsilon(h) \varepsilon(g), \\
\mathcal{S}(h_1)(h_3g_2)t(h_2,g_1)&=g_1t(h,g_2),\label{C-2.17}\\
s(\mathcal{S}(h_1),h_3g_2)t(h_2,g_1)&=\varepsilon(h)\varepsilon(g).\label{C-2.18}
\end{align}
With the above notations established, we can now prove the following lemma.
\begin{lemma} \label{C-lem2.6}
Suppose $H$ has a bijective antipode.\par 
 \textup{(1)} The equation (\ref{C-1.9}) is equivalent to  
\begin{equation}\label{C-1.17}
    (h\rhd v)_{-1} \otimes (h\rhd v)_0=p((h_3 \rhd v_0)_{-1},h_4)   q(h_1v_{-2},\mathcal{S}(h_6))    (h_2v_{-1})\mathcal{S}(h_5)      \otimes (h_3 \rhd v_0)_0.
\end{equation}\par 
\textup{(2)} The equation (\ref{C-1.9}) implies:
\begin{equation}
    v_{-1} \otimes (h\rhd v_0)=t(h_3,v_{-1})s(\mathcal{S}(h_1),(h_4\rhd v_0)_{-1}h_6)\mathcal{S}(h_2)((h_4\rhd v_0)_{-2}h_5)\otimes (h_4 \rhd v_0)_0.
\end{equation}
\end{lemma}

\begin{proof}
(1): Suppose $V \in \HH$ with a linear map $\rhd : H \otimes V \rightarrow V$ satisfying (\ref{C-1.9}). For any $v \in V, h\in H$, then we have
\begin{align*}
&(h\rhd v)_{-1} \otimes (h\rhd v)_0=  \varepsilon(h_2)\varepsilon((h_1 \rhd v)_{-2}) (h_1 \rhd v)_{-1}\otimes (h_1 \rhd v)_{0}  \\
&\overset{(\ref{C-1.15})}{=}q((h_1\rhd v)_{-3}h_2,S(h_4))p((h_1 \rhd v)_{-2},h_3)(h_1 \rhd v)_{-1}\otimes (h_1 \rhd v)_{0}\\
&\overset{(\ref{C-1.14})}{=}p((h_1 \rhd v)_{-1},h_4)q((h_1\rhd v)_{-3}h_2,\mathcal{S}(h_6))((h_1 \rhd v)_{-2}h_3)\mathcal{S}(h_5)\otimes (h_1 \rhd v)_{0}\\
& \overset{(\ref{C-1.9})}{=}p((h_3 \rhd v_0)_{-1},h_4)   q(h_1v_{-2},\mathcal{S}(h_6))    (h_2v_{-1})\mathcal{S}(h_5)      \otimes (h_3 \rhd v_0)_0.
\end{align*}
Conversely, if (\ref{C-1.17}) holds, we have
\begin{equation}\label{C-1.18}
     \varepsilon((h\rhd v)_{-1})(h\rhd v)_0=p((h_2 \rhd v_0)_{-1},h_3)q(h_1v_{-1},\mathcal{S}(h_4))(h_2\rhd v_0)_0.
\end{equation}
The origin equation comes from the following calculation, \begin{align*}
    (h_1\rhd v)_{-1}h_2 \otimes (h_1 \rhd v)_0&=p((h_3 \rhd v_0)_{-1},h_4)   q(h_1v_{-2},\mathcal{S}(h_6))    ((h_2v_{-1})\mathcal{S}(h_5))h_7 \otimes (h_3 \rhd v_0)_0\\
    &\overset{(\ref{C-1.15})}{=}p((h_3 \rhd v_0)_{-1},h_4)   q(h_2v_{-1},\mathcal{S}(h_5)) h_1v_{-2}\otimes (h_3\rhd v_0)_0 \\
    &\overset{(\ref{C-1.18})}{=}h_1v_{-1}\otimes(h_2 \rhd v_0).
\end{align*}\par 
(2) We have 
\begin{align*}
    v_{-1}\otimes ( h\rhd v_0)&=\varepsilon(v_{-1})\varepsilon(h_1)v_{-2}\otimes (h_2\rhd v_0)\\
    &\overset{(\ref{C-2.18})}{=}s(\mathcal{S}(h_1),h_3v_{-1})t(h_2,v_{-2})v_{-3}\otimes (h_4\rhd v_0)\\
    &\overset{(\ref{C-2.17})}{=}s(\mathcal{S}(h_1),h_5v_{-1})t(h_3,v_{-3})\mathcal{S}(h_2)(h_4v_{-2})\otimes h_6\rhd v_0\\
    &\overset{(\ref{C-1.9})}{=}s(\mathcal{S}(h_1),(h_4\rhd v_0)_{-1}h_6)t(h_3,v_{-1})\mathcal{S}(h_2)((h_4\rhd v_0)_{-2}h_5)\otimes{ (h_4\rhd v_0)_0}.
\end{align*}
\end{proof}

\begin{exm}\rm

For our purpose, we aim to describe  Yetter-Drinfeld modules over coquasi-Hopf algebra of the form $(\mathbbm{k}G, \Phi)$, where  $G$ is a finite  group and $\Phi$ a $3$-cocycle on $G$. Assume that $V$ is a left $kG$-comodule with a comodule structure map $\delta_L : V \rightarrow kG \otimes V$. Define

\[
{}^gV := \{ v \in V \mid \delta_L(v) = g \otimes v \}.
\]
Thus
\[
V = \bigoplus_{g \in G} {}^gV.
\]

Here we call $g$ the degree of the elements in ${}^gV$ and denote $\deg v = g$ for $v \in {}^gV$.

The left $\mathbbm{k}G$-comodule $(V, \delta_L)$ is a left-left Yetter--Drinfeld module over the coquasi-Hopf algebra $H = (\mathbbm{k}G, \Phi)$ if there is a linear map $\rhd : G \otimes V \rightarrow V$ such that for all $e, f \in G$ and $v \in {}^gV$:
\begin{align}
&e \rhd (f \rhd v) = \frac{\Phi(e, f, g)\Phi(efgf^{-1}e^{-1}, e, f)}{\Phi(e, fgf^{-1}, f)} (ef) \rhd v, \\
&1_H \rhd v = v, \\
&e \rhd v \in {}^{ege^{-1}}V.
\end{align}

The category of all left-left Yetter--Drinfeld modules over $(\mathbbm{k}G, \Phi)$ is denoted by $\GG$. Similarly, one can define left-right, right-left and right-right Yetter--Drinfeld modules over $(\mathbbm{k}G, \Phi)$. 

It is well-known
 that $\GG$ is a braided tensor category. More precisely, for any $M, N \in \GG$, the structure maps of $M \otimes N$ as a left-left Yetter--Drinfeld module are given by:
\begin{align}
\delta_L(m_g \otimes n_h) &:= gh \otimes m_g \otimes n_h, \\
x \rhd (m_g \otimes n_h) &:= \frac{\Phi(x, g, h)\Phi(xgx^{-1}, xhx^{-1}, x)}{\Phi(xgx^{-1}, x, h)} x \rhd m_g \otimes x \rhd n_h,
\end{align}
for all $x, g, h \in G$ and $m_g \in {}^gM$, $n_h \in {}^hN$.

The associativity constraint $a$ and the braiding $c$ of $\GG$ are given respectively by:
\begin{align}
a((u_e \otimes v_f) \otimes w_g) &= \Phi(e, f, g)^{-1}u_e \otimes (v_f \otimes w_g), \\
c(u_e \otimes v_f) &= e \rhd v_f \otimes u_e,
\end{align}
for all $e, f, g \in G$, $u_e \in {}^eU$, $v_f \in {}^fV$, $w_g \in {}^gW$ and $U, V, W \in \GG$.
\end{exm}
In general, let $\mathcal{C}$ be a braided monoidal category and $A$ a Hopf algebra in $\mathcal{C}$. The category of Yetter-Drinfeld modules  $\AAC$ is well-defined. The reader may refer to [\citealp{rootsys}, Section 3] for details.

\subsection{Bosonization for  coquasi-Hopf algebras}
Bosonization for Hopf algebras is well-known. In [\citealp{quasibon}], the authors introduced bosonization for quasi-Hopf algebras. Dually, bosonization for coquasi-Hopf algebras is given in \cite{YD-module}.
Let $H$ be a coquasi-Hopf algebra with associator $\Phi$. Suppose $R$  is a Hopf algebra in $\HH$, we denote 
$$ r^1 \otimes r^2:= \Delta_R(r).$$
\begin{lemma}\textup{[\citealp{YD-module}, Theorem 5.2]}
    Let us consider on $N:=R\otimes H$ the following structures:
\begin{align*}
(r \otimes h)  (s \otimes k) &= \frac{\Phi(h_2,s_{-1}, k_2)\Phi(r_{-1},(h_3 \rhd s_0)_{-1},h_5k_4)}{\Phi (r_{-2},h_1, s_{-2}k_1)\Phi((h_3 \rhd s_0)_{-2}, h_4, k_3)}   r_0  (h_3 \rhd s_0)_0 \otimes h_6k_5, \\
 u_B(k) &= k1_R \otimes 1_M,\\
\Delta_B(r \otimes h) &= \Phi^{-1}(r_{-1}^1, r_{-2}^2, h_1)r_0^1 \otimes r_{-1}^2 h_2 \otimes r_0^2 \otimes h_3, \\
\varepsilon_B(r \otimes h) &= \varepsilon_R(r)\varepsilon_M(h),\\
\omega_B((r \otimes h), (s \otimes k), (t \otimes l))& = \varepsilon_R(r)\varepsilon_R(s)\varepsilon_R(t)\Phi(h, k, l),
\end{align*}
where $r,s,t \in R$, $h,k,l \in H$.
With above operations, $N$ is a coquasi-Hopf algebra.
\end{lemma}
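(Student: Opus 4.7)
\smallskip

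\noindent\textbf{Proof proposal.} The plan is to verify, axiom by axiom, that $N = R \otimes H$ with the listed structures is a coquasi-Hopf algebra, by systematically lifting computations in the braided category $\HH$ (where $R$ is a Hopf algebra) back to the underlying vector space, keeping track of every instance where the associator $\Phi$ enters. The key organizing principle is that $\omega_B$ is essentially $\Phi$ pulled back along $\varepsilon_R \otimes \varepsilon_R \otimes \varepsilon_R$, so all non-strictness of $N$ comes from $\Phi$ together with the Yetter-Drinfeld action/coaction of $H$ on $R$.

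First I would check that $(N,\Delta_B,\varepsilon_B)$ is a coalgebra. Counitality is immediate from the counit axioms for $R$ in $\HH$ and for $H$, together with $\Phi(1,-,-) = \varepsilon \otimes \varepsilon$. Coassociativity is where the factor $\Phi^{-1}(r^1_{-1},r^2_{-2},h_1)$ in $\Delta_B$ comes in: applying $\Delta_B$ twice in either order produces, after rearranging, two expressions that differ by $\Phi^{-1}(r^1_{-1},r^2_{-2},r^3_{-3})$ versus the associator constraint placed on $\Delta_R$ by $R$ being a coalgebra in $\HH$, and these agree by the $3$-cocycle identity for $\Phi$. Next I would verify that $\omega_B$ satisfies the normalization $\omega_B(-,1,-)=\varepsilon\varepsilon$ and the $3$-cocycle identity; both are direct consequences of the corresponding properties of $\Phi$ and the fact that $\varepsilon_R$ is an algebra map in $\HH$.

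The heart of the proof is quasi-associativity of the multiplication, i.e.\ the identity
\begin{equation*}
n_1(m_1 p_1)\,\omega_B(n_2,m_2,p_2) \;=\; \omega_B(n_1,m_1,p_1)\,(n_2 m_2)p_2
\end{equation*}
for $n,m,p \in N$. Here I would substitute the explicit formula for the product, distribute both sides, and use (in order): the Yetter-Drinfeld compatibility (\ref{1.9}) in the equivalent form of Lemma \ref{lem2.6} to move $\rhd$-factors past the coaction; the fact that $m_R$ is a morphism in $\HH$, which controls how the scalars $\Phi(r_{-1},s_{-1},t_{-1})$ assemble; and finally the $3$-cocycle equation for $\Phi$ to collapse the accumulated scalar into $\Phi(h,k,l) = \omega_B(n,m,p)/(\varepsilon_R\varepsilon_R\varepsilon_R)$. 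I would then check that $m_B$ is a coalgebra homomorphism: this amounts to showing that the tensor product $R \otimes H$ carries the tensor product coalgebra structure twisted by $\delta_R$, which is a bookkeeping argument using (\ref{1.12}) and the fact that $\Delta_R$ is multiplicative up to the braiding in $\HH$.

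For the antipode I would set
\begin{equation*}
\mathcal{S}_N(r\otimes h) \;=\; \bigl(1_R \otimes \mathcal{S}_H(r_{-1}h)\bigr)\bigl(\mathcal{S}_R(r_0)\otimes 1_H\bigr),
\end{equation*}
together with $\alpha_N(r\otimes h) = \varepsilon_R(r)\alpha_H(h)$ and $\beta_N(r\otimes h) = \varepsilon_R(r)\beta_H(h)$ (possibly adjusted by the evaluation of $\alpha,\beta$ on elements emerging from the coaction of $H$ on $R$), and verify the two antipode identities by reducing to the antipode axioms for $\mathcal{S}_R$ in $\HH$ and for $(\mathcal{S}_H,\alpha_H,\beta_H)$. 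I expect the main obstacle to be precisely this non-strictness bookkeeping in the quasi-associativity step: there are six $\Phi^{\pm 1}$ factors in the product together with whatever $\Phi$'s come from the coactions $\delta_R$, and showing that they all cancel against $\omega_B$ on both sides is the delicate combinatorial core; everything else is formally analogous to Majid's bosonization for ordinary Hopf algebras with Yetter-Drinfeld modules replaced by their coquasi-Hopf analogues.
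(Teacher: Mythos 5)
The paper does not prove this lemma: it is imported verbatim from the reference [YD-module, Theorem 5.2], so there is no internal proof to compare your proposal against. Your plan — a direct, axiom-by-axiom verification that lifts the Hopf-algebra-in-$\HH$ structure of $R$ to the underlying vector space and tracks every occurrence of $\Phi$ — is exactly the strategy the cited source follows, and the identification of quasi-associativity as the combinatorial core where the Yetter--Drinfeld compatibility (in the form of Lemma \ref{lem2.6}), the multiplicativity of $\Delta_R$ up to braiding, and the $3$-cocycle identity must conspire to cancel all associator factors is the right diagnosis. Your formulas for $\alpha_N$ and $\beta_N$ are consistent with the paper's own specialization in Example \ref{ex 1.7}.

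One concrete correction: your proposed antipode $\mathcal{S}_N(r\otimes h)=\bigl(1_R\otimes\mathcal{S}_H(r_{-1}h)\bigr)\bigl(\mathcal{S}_R(r_0)\otimes 1_H\bigr)$ is missing an associator scalar. Compare with the paper's Example \ref{ex 1.7}, where for $R\#\kG$ one has $\mathcal{S}(X\otimes g)=\Phi(g^{-1}x^{-1},x,g)\,(1\otimes g^{-1}x^{-1})(\mathcal{S}_R(X)\otimes 1)$; the correction is a genuine $\Phi$-factor evaluated on degrees, not (as you hedge) an adjustment of $\alpha$ or $\beta$ by coaction terms. Without that factor the two antipode identities will not close, since the product of $(1_R\otimes\mathcal{S}_H(r_{-1}h))$ with $(\mathcal{S}_R(r_0)\otimes 1_H)$ in $N$ itself produces associator terms that must be compensated. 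The rest of the outline is sound as a plan, though of course the quasi-associativity cancellation is asserted rather than carried out.
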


Now suppose $N$ and $H$ are both coquasi-Hopf algebras.  Furthermore, assume there exist morphisms of coquasi-Hopf algebras
$$ \pi: N \rightarrow H \ \text{ and } \sigma: H \rightarrow N$$
such that $\pi\sigma=\operatorname{id}_N$.
\begin{lemma}\textup{[\citealp{YD-module}, Theorem 5.8]} \label{C-Lemma 1.6}
   Under above assumptions, $L:=N^{\operatorname{co}H}$ is a Hopf algebra in $\HH$.
     For all $a \in N$, we denote 
   \begin{equation}
    \tau(a)=\Phi_N(a_1,\sigma\mathbb{S}\pi(a_3)_1, a_4)a_2\sigma \mathbb{S}\pi(a_3)_2.
   \end{equation} 
   Then $\tau: N \rightarrow L$ is a well-defined map,
   where, for all $r, s \in L$, $h\in H$, $k \in \mathbbm{k}$, the Yetter-Drinfeld module structure is given by
   \begin{align*}
       \operatorname{ad}(h)(r)&:=\tau(\sigma(h)r)=\Phi_{H}(h_{1}r_{-1},\mathbb{S}(h_3)_1,h_4)(h_2r_0)\mathbb{S}(h_3)_2, \\  r_{-1}\otimes r_0 &:=\rho_R(r):=\pi(r_1)\otimes r_2.
       \end{align*} Moreover, $L$ is a Hopf algebra in $\HH$ via
       \begin{align*}
           &m_L(r\otimes s):=rs, \ \ \ \  \ \  \ u_L(k)=k1_N \\ 
       &\Delta_L(r):=\tau(r_1) \otimes \tau(r_2), \ \ \ \ \varepsilon_L(r):=\varepsilon_N(r).
   \end{align*}
   Furthermore, there is an isomorphism of coquasi-Hopf algebra $\phi: L\# H \rightarrow N$ given by 
   $$ \phi(r \otimes h)=r \sigma(h), \ \  \phi^{-1}(a)=\tau(a_1)\otimes \pi(a_2).$$
\end{lemma}
For the reader's convenience, we provide an example about $\GG$,
where $G$ is a finite group and $\Phi$ is a $3$-cocycle on $G$. Suppose $R$ is a Hopf algebra in $\GG$. Since $R$ is a left $G$-comodule, there is a
$G$-grading on $R$: $$R =\bigoplus_{x \in G} {^xR},$$
where $^xR:=\{ X\in R\mid \rho(x)=x\otimes X \}$.  For simplicity, in this paper, homogeneous elements in $\GG$ are denoted by capital letters, say $X,Y,Z...$, and the associated degrees are denoted by their lowercase letters, say $x,y,z$...
\begin{exm}\rm \label{C-ex 1.7}
    Let us maintain the assumptions on $R$ from above. For all $X,Y,Z \in R$, $h,g,k \in G$, we consider the following structure on $M=R \otimes  \mathbbm{k}G$.
    \begin{align*}
         &(X \otimes  h)(Y\otimes g)=\frac{\Phi(h,y,k)\Phi(x,hyh^{-1},hg)}{\Phi(x,h,yg)\Phi(hyh^{-1},h,g)}X(h \rhd Y)\otimes hg, \ \ \ 
         u_{H}(k)=k1_R\otimes 1_{\mathbbm{k}G}, \\
         &\Delta(X\otimes h)= \Phi^{-1}(x^1,x^2,h)X^1 \otimes x^2h\otimes X^2 \otimes h, \ \ \ \ \ \ \
         \varepsilon(X\otimes h)=\varepsilon_R(X)\varepsilon_{\mathbbm{k}G}(h),\\[2pt]
         &\Phi_M(X\otimes h, Y\otimes g, Z \otimes k)=\varepsilon_R(X)\varepsilon_R(Y) \varepsilon_R(Z)\Phi(h,g,k),\\[2pt]
        &\mathcal{S}(X\otimes g)=\Phi(g^{-1}x^{-1},x,g)(1\otimes g^{-1}x^{-1})(\mathcal{S}_R(X)\otimes 1),\\
    &\alpha(1\otimes g)=1, \ \ \alpha(X\otimes g)=0, \ \ \ \ \beta(1\otimes g)=\frac{1}{\Phi(g,g^{-1},g)}, \ \ \beta(X\otimes g)=0.
    \end{align*}
    With the above operations, $M$ is indeed a coquasi-Hopf algebra.\par 
Conversely, let us now consider the reverse construction. 
    Suppose  $M$ is  a coradically graded coquasi-Hopf algebra with coradical $(\mathbbm{k}G,\Phi)$. Then $R:=M^{\operatorname{co}\kG}$ is an object in $\GG$  equipped with the following structures:
    \begin{align*}
       &\operatorname{ad}(g)(X)=\frac{\Phi(gh,g^{-1},g)}{\Phi(g,g^{-1},g)}(gX)g^{-1}, \ \   \\ &\rho_R(X)=\pi(X_1)\otimes X_2, \ X \in {^hR}, g \in G.
       \end{align*} 
   Furthermore, $R$ carries a Hopf algebra structure in $\GG$, 
    \begin{align*}
       &m_R(X \otimes Y)=m_M(X\otimes Y), \ \ \ \ u_R(k)=\mathbbm{k}1_M,\\
       & \Delta_R(X)=\frac{1}{\Phi(x_1,x_2,x_2^{-1})}X_1 \cdot  x_2^{-1} \otimes X_2,  \ \  \ \varepsilon_R(X)=\varepsilon_M(X), \\
       & \mathcal{S}_R(X)=\frac{1}{\Phi(x,x^{-1},x)}x\cdot \mathcal{S}(X),
    \end{align*}
    where $X,Y \in R$.
\end{exm}
However, this construction alone is not sufficient for our purposes. The following condition occurs frequently in this paper. Given $R,R'$ be two Hopf algebras in $\GG$ together with a surjective Hopf algebra map $\pi': R \rightarrow R'$ and an injective map $i': R' \rightarrow R$ such that $ \pi \circ i= \operatorname{id}_R'$. Let $M=R\# \kG$, and $H=R' \# \kG$. Obviously, these maps naturally extend to yield a surjective coquasi-Hopf algebra map 
$\pi:M \rightarrow H$ and an injective coquasi-Hopf algebra map
$i:H \rightarrow M$, such that $\pi \circ i=\operatorname{id}$. \par

\begin{exm} \rm
    Let $L:=M^{\operatorname{co}H}$, then $L$ is an object in  $\HH$, with the following structures:
    \begin{align*}
    &   \operatorname{ad}: H \otimes L \rightarrow L,  h \otimes l \mapsto \Phi_{H}(h_{1}l_{-1},\mathbb{S}_H(h_3)_1,h_4)(h_2l_0)\mathbb{S}_H(h_3)_2, \\
    &\operatorname{\rho}: L \rightarrow H \otimes L, l \mapsto \pi(l_1) \otimes l_2.
    \end{align*}
    Our primary focus now shifts to analyzing the linear map $\operatorname{ad}$. Given  $g\in G$  and $X \in {^xR}$,
    $$ \operatorname{ad}(g)(X)=\frac{\Phi(gx,g^{-1},g)}{\Phi(g,g^{-1},g)}(gX)g^{-1}.$$
    Now suppose $Y$ is a $(y,1)$-primitive element in $H$, we first need to compute what $\mathbb{S}(Y)$ is,

    \begin{align*}
        \mathbb{S}(Y)= \beta * \mathcal{S} * \alpha(Y)=\frac{1}{\Phi(y,y^{-1},y)}\mathcal{S}(Y)&=\frac{1}{\Phi(y,y^{-1},y)}(1\# y^{-1})(-Y\#1)\\&=-\frac{1}{\Phi(y,y^{-1},y)}y^{-1}Y.
    \end{align*}  Then we can proceed to calculate $\operatorname{ad}(Y)(X)$,
    \begin{equation}
\begin{aligned}\label{C-1.27}
    \operatorname{ad}(Y)(X)=YX-\frac{1}{\Phi(y,y^{-1},y)}(yX)(y^{-1}Y)&=YX-\frac{\Phi(yx,y^{-1},y)}{\Phi(y,y^{-1},y)}((yX)y^{-1})Y\\&=YX-\operatorname{ad}(y)(X)Y.
\end{aligned}
\end{equation}
\end{exm}
The definition of the bosonization  naturally extends to the case that $R$  is a Hopf algebra in the
braided monoidal category $\AAC$, where $\mathcal{C}$ is now an arbitrary braided monoidal  category.
 We collect several key results from \cite{B95} and \cite{AF00} that will be essential for the construction presented in Section 3.
\begin{lemma}\label{C-lem2.12}
\textup{[\citealp{AF00}, Theorem 3.2]} Let \( H \) and \( A \) be Hopf algebras in a braided monoidal category \( \mathcal{C} \). Let \(\pi : H \to A\) and \(\iota : A \to H\) be Hopf algebra morphisms such that \(\pi \circ \iota = \mathrm{id}_{A}\). If \( \mathcal{C} \) has equalizers and \( A \otimes (-)\) preserves equalizers, there is a Hopf algebra \( R \) in the braided monoidal category $\AAC$, such that  
\[ H \cong R \# A. \]    
\end{lemma}
\begin{lemma}\textup{[\citealp{B95}, Proposition 4.2.3]}\label{C-thm1.12}
    Let \( A \) be a Hopf algebra in \( \mathcal{C} \) and \( K \) a Hopf algebra in \( \AAC \). There is an obvious isomorphism of braided monoidal categories
    \begin{equation}
        {^{K\#A}_{K\#A}\mathcal{YD}}(\mathcal{C})\cong  {^{K}_{K}\mathcal{YD}}(\AAC).
    \end{equation}
\end{lemma}

\subsection{ Nichols algebras in a braided monoidal category}
Having established the necessary framework for bosonization, we now turn to the theory of Nichols algebras. Roughly speaking, Nichols algebras serve as the analogues of ordinary symmetric algebras in more general braided monoidal categories. This concept can be defined through various equivalent approaches. 
Now let $\mathcal{C}$ be  an abelian braided monoidal category with braiding isomorphism $c$.
We present the definition of Nichols algebras in $\mathcal{C}$. \par 
Let $V$ be a nonzero object in $\mathcal{C}$; by $T(V)$ we denote the tensor algebra in $\mathcal{C}$ generated freely by $V$. Here we denote that
$$ V^{\otimes n }:=\left( \cdots\left( \left( V \otimes V\right) \otimes V\right) \cdots \otimes V\right). $$
Then
$T(V)$ is isomorphic to $\bigoplus_{n \geq 0} V^{\otimes n}$ as an object. It is standard that $T(V)$ is  a graded Hopf algebra in $\mathcal{C}$.\par 
\iffalse
Let $i_n:V^{\otimes n} \rightarrow T(V)$ be the  canonical injection. We denote the total Woronowicz symmetriser by 
$$ \operatorname{Wor}(c)=\bigoplus_{n \geq 1} i_n [n]!_c,
$$ 
where $[n]!_c \in \operatorname{End}(V^{\otimes n}).$ For explicit definition of $[n]!_c$, one may refer to [\citealp{nicholscatdef}, Section 5.6].\fi
\begin{definition}
Let $V \in \mathcal{C}$ and $I(V)$ be the largest coideal of ${T}(V)$ contained in $\bigoplus_{n \geq 2}T^n(V)$. The Nichols algebra of $V$ is defined by 
    $$ \mathcal{B}(V):=T(V)/I(V).$$
	%The Nichols algebra $\mathcal{B}(V)$ of $V$ is deﬁned to be the quotient Hopf algebra $T(V)/\operatorname{Wor}(c)$ in $\mathcal{C}$.
\end{definition}
%This definition has an equivalent form, we list here for later use.
%\begin{rmk}\rm \textup{[\citealp{rootsys}, Definition 1.6.17]}
   % Let $V \in \mathcal{C}$ and $I(V)$ be the largest coideal of ${T}(V)$ contained in $\bigoplus_{n \geq 2}T^n(V)$. The Nichols algebra of $V$ is defined by 
   % $$ \mathcal{B}(V):=T(V)/I(V).$$
%\end{rmk}
For further analysis, we shall assume that  $\mathcal{C}$ is a $\mathbbm{k}$-linear braided monoidal abelian category over the field $\mathbbm{k}$. This additional structure allows us to adopt an equivalent definition which is more convenient for our purposes.
\begin{definition}\textup{[\citealp{defofnichols}, Definition 2.4]}
    For a given object $V \in \mathcal{C}$ the Nichols
algebra $\mathcal{B}(V)$ is the unique Hopf algebra in $\mathcal{C}$ that satisfies the following conditions:\par 
\text{$(1)$} The Hopf algebra $\mathcal{B}(V)$ is graded by the non-negative integers.\par
\text{$(2)$} The zeroth component of the grading satisfies $\mathcal{B}(V)_0=\mathbbm{k}$.\par
\text{$(3)$} The first component of the grading satisfies $\mathcal{B}(V)_1=V$ , and $\mathcal{B}(V)$ is generated
by $V$ as an algebra in $\mathcal{C}$.\par
\text{$(4)$} The subobject of primitive elements of $\mathcal{B}(V)$ is $V$.\par
\end{definition}
Suppose $\mathcal{C}$ and $\mathcal{D}$ are  braided monoidal abelian categories,  and $F: \mathcal{C}
\rightarrow \mathcal{D}$ is a braided monoidal equivalence of abelian category.
The following lemma plays a crucial role in our theory. 
\begin{lemma}\label{C-lem2.18}
   Let $V \in \mathcal{C}$ be an object and $\mathcal{B}(V)$ be its corresponding Nichols algebra in $\mathcal{C}$. Then  
   $$F(\mathcal{B}(V))\cong \mathcal{B}(F(V))$$ as Hopf algebras in $\mathcal{D}$.
\end{lemma}
\begin{proof}
Let $V \in \mathcal{C}$  and $J_{V,V}: F(V\otimes V) \cong F(V) \otimes F(V)$ the monoidal structure of $F$.  Using  $J$ repeatedly, we have
$$F(V^{\otimes n}) \cong F(V)^{\otimes n}$$ for each $n\in \mathbb{N}$.
Therefore we have $F(\bigoplus\limits_{n \in \mathbb{N}}V^{\otimes n})\cong \bigoplus\limits_{n \in \mathbb{N}}F(V^{\otimes n})\cong \bigoplus\limits_{n \in \mathbb{N}}F(V)^{\otimes n}$. That is $F(T(V)) \cong T(F(V))$.\par
    Recall that the  Nichols algebras in $\mathcal{C}$ are of the form $T(X)/I$ where $X \in \mathcal{C}$ and $I$ is the unique maximal graded Hopf ideal in $T(V)$ generated by homogeneous elements of degree greater than or equal to $2$.  According to $F(T(V)) \cong T(F(V))$, we have
$$F(T(V)/I) \cong T(F(V))/F(I),$$ which is finite-dimensional as well.
Here $F(I)$ is a homogeneous Hopf ideal of degree greater than or equal to $2$ of $T(F(V))$. Note that the Nichols algebra generated by $F(V)$ must be of the form $T(F(V))/J$, where $J$ is the unique maximal homogeneous graded Hopf ideal of $T(F(V)) \in \mathcal{D}$ with degree greater than or equal to $2$.  Hence   $ F(I) \subset J$ and $T(F(V))/J \subset T(F(V))/F(I)$.
On the other hand, $F^{-1}: \mathcal{D} \longrightarrow \mathcal{C}$ is the  inverse of $F$, which is an exact monoidal functor. Hence
$$F^{-1}(T(F(V))/J) \cong T(F^{-1}(F(V))/F^{-1}(J)\cong T(V)/F^{-1}(J) \supseteq T(V)/I.$$
So $F^{-1}(J)\subseteq I$, combining $F(I) \subseteq J$ implies $F(I)=J$, which leads to $F(T(V)/I) \cong T(F(V))/J$ and the proof is done. 
\end{proof}

\section{Monoidal equivalence between Yetter-Drinfeld module categories}

We build upon the theory of Hopf pairings and partial dualization in braided monoidal categories \cite{Simon}. We apply this theory to the setting of coquasi-Hopf algebras. Our main result, Theorem \ref{C-cor3.8}, establishes a key braided monoidal equivalence $\Omega$ between categories of Yetter-Drinfeld modules, which is important to our reflection theory.
\subsection{Hopf pairings  in braided monoidal categories}
In this subsection, $\mathcal{C}$ always represents a braided monoidal category with associative isomorphism $a$ and braiding isomorphism $c$ unless stated otherwise.
We begin by introducing the notion of Hopf pairings.
\begin{definition}
     Let \( A \) and \( B \) be Hopf algebras in \( \mathcal{C} \). A morphism \( \omega : A \otimes B \to 1 \) in \( \mathcal{C} \) is called a Hopf pairing, if the following identities hold:
\begin{align*}
\omega \circ (\mu_A \otimes \operatorname{id}_B)& = \omega \circ (\operatorname{id}_A \otimes \omega \otimes \operatorname{id}_B)\circ  (\operatorname{id}_A\otimes a^{-1}_{A,B,B}) \circ a_{A,A,B\otimes B}\circ (\operatorname{id}_{A \otimes A} \otimes \Delta_B), \\
\omega \circ (\eta_A \otimes \operatorname{id}_B) &= \varepsilon_B,\\
\omega \circ (\operatorname{id}_A \otimes \mu_B)& = \omega \circ (\operatorname{id}_A \otimes \omega \otimes \operatorname{id}_B)\circ  (\operatorname{id}_A\otimes a^{-1}_{A,B,B}) \circ a_{A,A,B\otimes B} \circ (\Delta_A \otimes \operatorname{id}_{B \otimes B}),\\
\omega \circ (\operatorname{id}_A \otimes \eta_B)& = \varepsilon_A.
\end{align*}
    A Hopf pairing $\omega : A \otimes B \to 1$ is called non-degenerate
if there is a morphism $\omega' : 1\rightarrow B \otimes A$, such that
$$ (\omega \otimes \operatorname{id}_A) \circ a^{-1}_{A,B,A}\circ (\operatorname{id}_A \otimes \omega')= \operatorname{id}_A, \ \ \text{and} \ \ (  \operatorname{id}_B\otimes \omega) \circ a^{-1}_{B,A,B}\circ ( \omega'\otimes \operatorname{id}_B)= \operatorname{id}_B.$$
\end{definition}
There are some useful properties of Hopf pairings, we list here for further use.
\begin{rmk}\textup{[\citealp{Simon}, Remark 2.13, 2.16]}\rm\label{C-rmk2.3}\begin{itemize}
    \item [(1)] Suppose  $\omega: A \otimes B \rightarrow \bfo$ is a Hopf pairing in $\mathcal{C}$, then \begin{equation}\label{C-3.0}
        \omega\circ (\operatorname{id} \otimes \mathcal{S}_B)=\omega\circ (\mathcal{S}_A\otimes \operatorname{id}).
    \end{equation} 
    \item [(2)] If $\omega: A\otimes B \rightarrow \textbf{1}$  is a Hopf pairing, then also \begin{equation}\label{C-3.15}
        \omega^{-}:=\omega \circ c^{-1}_{A,B}\circ (\mathcal{S}_B^{-1}\otimes \mathcal{S}_A^{-1}): B \otimes A \rightarrow \bfo.
    \end{equation} Furthermore, if $\omega$ is non-degenerate, then $\omega^{-}$ is non-degenerate as well with inverse copairing $(\mathcal{S}_A\otimes \mathcal{S}_B)\circ c_{B,A}\circ \omega'$.
    \item [(3)] If $A$ has a left dual object $A^*$,  then $A^*$ naturally carries the structure of a Hopf algebra in $\mathcal{C}$ such that the evaluation morphism $\operatorname{ev}_A:A^* \otimes A \rightarrow \textbf{1}$ is a Hopf pairing. Moreover, this Hopf pairing is non-degenerate with inverse copairing $\operatorname{coev}_A$.
\end{itemize}
\end{rmk}
Having established the properties of Hopf pairings, we now recall the  important  braided monoidal equivalence $\Omega^{\omega}$ induced by a certain Hopf pairing $\omega$.

\begin{thm}\textup{[\citealp{Simon}, Theorem 3.20]}\label{C-thm2.6}
Let $\omega: A \otimes B \to \mathbf{1}$ be a non-degenerate Hopf pairing. 
The categories $\AAC$ and $\BBC$ are braided monoidal equivalent via the monoidal 
functors $\Omega^{\omega}$ as follows.   
\begin{align*}
\Omega^{\omega}: \AAC &\longrightarrow \BBC,\\
(X,\rho_A,\delta_A)&\mapsto (X,\rho_B,\delta_B).
\end{align*}
Where $\rho_B, \delta_B$ are given by 
\begin{align*}
    \rho_B&=(B\otimes X \xrightarrow{c_{X,B}^{-1}} X\otimes B \xrightarrow{\delta_A \otimes \operatorname{id}_B}(A\otimes X)\otimes B \xrightarrow{a_{X,A,B}\circ (c^{-1}_{A,X}\otimes \operatorname{id}_B)} X\otimes (A\otimes B) 
    \xrightarrow{ \operatorname{id}_X \otimes (\omega \circ (\operatorname{id}_A \otimes (\mathcal{S}_B^{-1})^2)) }X)\\
    \delta_B&=(X \xrightarrow{\omega'\otimes \operatorname{id}_X}(B\otimes A)\otimes X \xrightarrow{a_{B,A,X}} B \otimes (A\otimes X)\xrightarrow{\operatorname{id}_B \otimes \rho_A} B\otimes X \xrightarrow{c_{X,B}\circ c_{B,X}\circ (\mathcal{S}_B^2\otimes \operatorname{id}_X)}B\otimes X).
\end{align*}
\end{thm}
\begin{rmk}\label{C-rmk2.7}\rm
\begin{itemize}
    \item [(1)] Starting from a non-degenerate Hopf pairing $\omega: A \otimes B \to \mathbf{1}$. There is  a non-degenerate Hopf pairing  $\omega^{-}:   B \otimes A \rightarrow \bfo $ by Remark \ref{C-rmk2.3}. Thus we obtain an equivalence:
\begin{equation}\label{C-2.1}
\Omega^{\omega^{-}}:\BBC \rightarrow \AAC.    
\end{equation}
\item [(2)] 
The braided monoidal functor 
$$\Omega^{\omega^{-}} \circ \Omega^{\omega}:\AAC \rightarrow \AAC$$
is isomorphic to the identity functor by \textup{[\citealp{Simon}, Proposition 3.22]}\label{C-prop2.8}.
\item [(3)] For completeness, we recall the construction of the functor $\Omega^{\omega}$, which is investigated in \textup{[\citealp{Simon}, Theorem 3.16, Corollary 3.19]}. 
Let $\omega: A \otimes B \to \mathbf{1}$ be a non-degenerate Hopf pairing. 
The functor $\Omega^{\omega}$ is given by
$$ \Omega^{\omega}:= T \circ D.$$
The braided monoidal equivalence $T$ is given by \begin{align*}
    T&: \mathcal{YD}_A^A(\mathcal{C}) \to \AAC \\
    T&(X, \rho^r, \delta^r) = (X, \rho^- \circ (\mathcal{S}_A^{-1} \otimes \mathrm{id}_X), (\mathcal{S}_A \otimes \mathrm{id}_X) \circ \delta^+),
\end{align*}
with \(\rho^- := \rho^r \circ c_{X,A}^{-1}\) and \(\delta^+ := c_{X,A} \circ \delta^r\). \\
Another  braided monoidal equivalence $D$ is given by $$
D: \AAC \to \BBRC, \ \ 
(X, \rho_X, \delta_X) \mapsto (X, \rho_{D(X)}, \delta_{D(X)})$$

with  
\begin{align*}
    \rho_{D(X)} &= (\mathrm{id} \otimes \omega)\circ a_{X,A,B} \circ (c_{A,X}^{-1} \otimes S^{-1})\circ a^{-1}_{A,X,B}  \circ ( \delta_X\otimes \operatorname{id}), \\
    \delta_{D(X)} &= c_{B,X} \circ (\mathcal{S}_B \otimes \rho_X) \circ a_{B,A,X}\circ (\omega' \otimes \mathrm{id}).
\end{align*}

\end{itemize}

\end{rmk}
\subsection{Partial dualization of a Hopf algebra in braided monoidal categories}
We now turn our attention to the construction of partial dualization. This construction, originally developed in [\citealp{Simon}, Theorem 4.4], provides a powerful tool for studying Hopf algebras in braided monoidal categories.\par Let us begin by recalling the setup. Consider a braided monoidal category $\mathcal{C}$ and a Hopf algebra $R$ in this category. A partial dualization datum is defined as follows.
\begin{definition}
    A partial dualization datum is denoted by $\mathcal{A}=(R\overset{\pi}{\rightarrow} A,B, \omega)$ in $\mathcal{C}$ if \par
$\cdot$ There is a Hopf algebra projection $\pi:R \rightarrow A$ to a Hopf subalgebra $A \subset R$ in $\mathcal{C}$. \par 
$\cdot$ There is a Hopf algebra $B$ with a non-degenerate Hopf pairing $\omega: A\otimes B \rightarrow \bfo$.
\end{definition} 
Now let $H$ be a coquasi-Hopf algebra with bijective antipode, and $\mathcal{C}=\HH$. Suppose $R$ is a Hopf algebra in $\mathcal{C}$ and $\mathcal{A}=(R \overset{\pi}{\rightarrow} A,B, \omega)$ is a partial dualization datum. By Lemma \ref{C-lem2.12}, there is a well-defined $K \in \AAC$ such that $R\cong K\#A$ in $\mathcal{C}$.
Combined with Theorem \ref{C-thm2.6} discussed in the previous subsection, the following lemma is immediate.
\begin{lemma}
    \label{C-thm2.8}
    With above notations, the equivalence of braided categories
    $$ \Omega^{\omega}: \AAC \rightarrow \BBC$$ 
    induces an equivalence of braided monoidal categories:
    \begin{equation}
        ^{R}_{R}\mathcal{YD}(\mathcal{C})\cong {^{K}_{K}\mathcal{YD}}(\AAC) \overset{\overline{\Omega}}{\longrightarrow}  {^{L}_{L}\mathcal{YD}}(\BBC)\cong {^{L\#B}_{L\#B}\mathcal{YD}}(\mathcal{C}).
    \end{equation}
    Here $L:= \Omega(K)$ is
  a Hopf algebra in $\BBC$.
\end{lemma}
\begin{proof}
    Since $\Omega^{\omega}$ is a braided monoidal equivalence, then $L:= \Omega(K)$ is a Hopf algebra in $\BBC$. Furthermore, $L \# B$ is a Hopf algebra in $\mathcal{C}$. The braided monoidal equivalences 
    $$  ^{R}_{R}\mathcal{YD}(\mathcal{C})\cong {^{K}_{K}\mathcal{YD}}(\AAC),$$
    $$  {^{L}_{L}\mathcal{YD}}(\BBC)\cong {^{L\#B}_{L\#B}\mathcal{YD}}(\mathcal{C})$$
    follow by Lemma \ref{C-thm1.12}. Each object in ${^{K}_{K}\mathcal{YD}}(\AAC)$ naturally belongs to $\AAC$, therefore the equivalence $\Omega^{\omega}$ naturally extends to an equivalence of braided monoidal categories.
    $$ \overline{\Omega}: {^{K}_{K}\mathcal{YD}}(\AAC) \longrightarrow {^{L}_{L}\mathcal{YD}}(\BBC).$$
\end{proof}

\iffalse \begin{cor}\textup{[\citealp{Simon}, Corollary 4.3]}
The two fold dualization of $H$ is isomorphic to itself. A non-trivial isomorphism of Hopf algebra is 
$$ \Omega^{\omega^{-}}(\Omega^{\omega}(K))\# A \overset{\theta_K \#
\operatorname{id}}{
\longrightarrow
} K \#A=H.$$
\end{cor}
This self-duality property has important implications for the structure theory of Hopf algebras in braided categories.\par 
\fi
We now specialize to the  case which we concern.
\begin{lemma}\label{C-lem3.7}
    Let $H$ be a coquasi-Hopf algebra with bijective antipode.  Assume that $N \in \HH$ and  $\mathcal{B}(N)$ is finite-dimensional, then we have an isomorphism of Hopf algebras in $\HH$
    $$ \BN^{*\operatorname{op,cop}}\cong \bB(N^*).$$
   Consequently, the evaluation map $\operatorname{ev}: \bB(N^*) \otimes \BN \rightarrow \textbf{1}$ is a non-degenerate Hopf pairing.
\end{lemma}
\begin{proof}
We establish this result in several steps. First, the object $\BN^{*\operatorname{op,cop}}$ with the following operations is a Hopf algebra in $\HH$ by [\citealp{AGn99}, Lemma 2.2.3]
$$ (\BN^*, \mu_{{\BN^*}}, c^{-1}_{{\BN^*},{\BN^*}}, \eta_{{\BN^*}}, c_{{\BN^*},{\BN^*}}\Delta_{{\BN^*}}, \varepsilon_{{\BN^*}}, \mathcal{S}_{{\BN^*}}).$$ 
 It is immediate that $\BN^*(1)=N^*$, thus $\BN^{*\operatorname{op,cop}}$ is generated by $N^*$. On the other hand, since $N^* \in \HH$, $\bB(N^*)$ is a Nichols algebra generated by $N^*$ in $\HH$. By universal property of Nichols algebra, there is a surjective Hopf algebra map $$\pi:\BN^{*\operatorname{op,cop}} \longrightarrow \bB(N^*)$$ in $\HH$. The object   $\bB(N^*)$ is the graded dual of $\bB(N)$ by [\citealp{defofnichols}, Lemma 2.6]. Therefore $\operatorname{dim}(\bB(N^*))=\operatorname{dim}(\bB(N)^*)$ since $\operatorname{dim}(\bB(N))$ is finite-dimensional. As a result 
  $$ \BN^{*\operatorname{op,cop}}\cong \bB(N^*).$$ as Hopf algebras in $\HH$.
  By Remark \ref{C-rmk2.3}(3),  the evaluation map $\operatorname{ev}: \bB(N)^{*\operatorname{op},\operatorname{cop}} \otimes \BN \rightarrow \textbf{1}$ is a non-degenerate Hopf pairing. Then by an appropriate abuse of notation, we can write $$\operatorname{ev}:\bB(N^*) \otimes \BN \rightarrow \textbf{1},$$ which confirms that this is indeed a non-degenerate Hopf pairing. 
\end{proof}
This lemma will play an important role in what follows, as noted in the subsequent application of Lemma \ref{C-thm2.8}.
\begin{thm}\label{C-cor3.8}
 Let $H$ be a coquasi-Hopf algebra with bijective antipode, and $\mathcal{C}=\HH$. Suppose $N=\bigoplus_{i \in I}N_i \in \HH$, and the Nichols algebras $\bB(N_i)$ are all finite-dimensional. For each $i\in I$, we have a partial dualization datum $\mathcal{A}_i=(\bB
 (N)\overset{\pi_i}{\rightarrow}\bB(N_i),\bB(N_i^*), \operatorname{ev}_{\bB(N_i)}^-)$.  We set $K_i:= \BN^{\operatorname{co}\bB(N_i)} \in {^{\bB(N_i)}_{\bB(N_i)}\mathcal{YD}(\mathcal{C})}$.  Then we have the following equivalences of braided monoidal categories 
 \begin{align*}
  \Omega_i:{^{\bB(N_i)}_{\bB(N_i)}\mathcal{YD}(\mathcal{C})}& \cong {^{\bB(N_i^*)}_{\bB(N_i^*)}\mathcal{YD}(\mathcal{C})},\\
     \widetilde{\Omega_i}:{^{K_i}_{K_i}\mathcal{YD}}({^{\bB(N_i)}_{\bB(N_i)}\mathcal{YD}(\mathcal{C})})&\cong{^{\Omega_i^{}(K_i)}_{\Omega_i^{}(K_i)}\mathcal{YD}}( {^{\bB(N_i^*)}_{\bB(N_i^*)}\mathcal{YD}(\mathcal{C})}).
 \end{align*}
\end{thm}
\begin{proof}
    It follows from Remark \ref{C-rmk2.7}(1), Lemma \ref{C-thm2.8} and \ref{C-lem3.7} immediately.
\end{proof}
\subsection{The functor $\Omega^{\omega}$  under $\mathbb{Z}$-gradings}
We now investigate the properties of the functor $\Omega^{\omega}$.\par 
Let $\NN\Gr \Mod_{\mathbbm{k}}$(resp.$\ZZ\Gr\Mod_{\mathbbm{k}}$) be the category of $\mathbb{N}_0$-graded vector spaces(resp.$\mathbb{Z}$-graded vector spaces).
The functor $\NN\Gr \Mod_{k} \rightarrow \ZZ\Gr \Mod_{k}$, which extends the $\NN$-grading of an object $V$ in $\NN\Gr \Mod_{k}$ to a $\ZZ$-grading by setting $V(n) = 0$ for all $n < 0$, is monoidal.  Consequently, we can view $\NN$-graded coalgebras and coquasi-Hopf algebras as $\ZZ$-graded coalgebras and $\ZZ$-graded coquasi-Hopf algebras, respectively.\par 
Furthermore, suppose $H$ is a $\mathbb{N}_0$-graded coquasi-Hopf algebra. A $\ZZ$-graded Yetter-Drinfeld module $V$ over $H$ is by definition an object $V$ in $\HH(\ZZ\Gr \Mod_{k})$. In other words, $V$ is a $\ZZ$-graded vector space such that the operation $H\otimes V \rightarrow V$ and comodule structure maps  $V \rightarrow H \otimes V$ are $\mathbb{Z}$-graded.
\begin{lemma}\label{C-lem 1.24}Suppose $H$ is a $\mathbb{Z}$-graded coquasi-Hopf algebra.\par 
\text{$(1)$} Let $K$ be a Nichols algebra in $\HH(\Zgr)$, and $K(1)=\oplus_{\gamma \in \mathbb{Z}} K(1)_\gamma$ a $\ZZ$-graded object in $\HH(\Zgr)$. Then there is a unique $\ZZ$-grading on $K$ extending the grading on $K(1)$. Moreover, $K(n)$ is $\ZZ$-graded in $\HH(\Zgr)$ for all $n \geq 0$.\par
\text{$(2)$}  Let $K$ be a $\ZZ$-graded braided Hopf algebra in $\HH(\ZZ\Gr \Mod_{k})$. Then the bosonization $K\#H$ is a $\ZZ$-graded Hopf algebra with $\deg K(\gamma)\#H(\lambda) = \gamma + \lambda$ for all $\gamma, \lambda \in \ZZ$.\par
\text{$(3)$}  Let $H_0 \subseteq H$ be a coquasi-Hopf subalgebra of degree $0$, and $\pi : H \rightarrow H_0$ a coquasi-Hopf algebra map with $\pi|_{H_0} = \mathrm{id}$. Define $R = H^{\operatorname{co}H_0}$. Then $R$ is a $\ZZ$-graded braided Hopf algebra in ${^{ H_0}_{H_0}\mathcal{YD}}(\ZZ\Gr \Mod_{k})$ with $R(\gamma) = R \cap H(\gamma)$ for all $\gamma \in \ZZ$.
\end{lemma}
\begin{proof}
(1) 
The module and comodule maps of $K(1)$ are $\ZZ$-graded and hence the infinitesimal braiding $c \in \mathrm{Aut}(K(1) \otimes K(1))$, being determined by the module and comodule maps, is $\ZZ$-graded. Moreover, the structure maps of $K(n)$ for $n \in \mathbb{N}$  are determined by $c$ and $K(1)$, since $K(1)$ generates $K$.  Therefore $K$ is a $\mathbb{Z}$-graded objects in $\HH(\Zgr)$.\par 
(2) and (3) are the same as the $\mathbb{N}_0$-graded case.
\end{proof}

Let $A,B \in \HH$ be $\mathbb{N}_0$-graded Hopf algebras with a non-degenerate Hopf pairing $\omega$. Moreover, 
$$ \omega(A(m),B(n))=0, \ \ \text{for all} \ m\neq n.$$
Obviously, if $\BN$ is finite-dimensional and let $A=\bB(N^*)$, $B=\BN$, $\omega=\operatorname{ev}$. This setting satisfies the above requirement. To extend this construction to the  $\mathbb{Z}$-graded setting, we extend the $\mathbb{N}_0$-grading to $\mathbb{Z}$-grading  by
$$ A(n)=0, \ B(n)=0 \ \  \text{for all}  \ n<0.$$
We then define a new $\mathbb{Z}$-grading on $A$ by $\operatorname{deg}(A(n))=-n$ for all $n \in \mathbb{Z}$, which ensures that  $\omega: A \otimes B \rightarrow \bfo$ is $\mathbb{Z}$-graded. Here, the grading of $\mathbbm{k}$ is given by $\mathbbm{k}(n) = 0$, if $n \neq  0$, and $\mathbbm{k}(0) = \mathbbm{k}$.\par 
Now we return to the purpose of this subsection.
We first introduce the following notations for further use.
\begin{definition}
Let $H$ be a coquasi-Hopf algebra with bijective antipode, and $\mathcal{C}=\HH$.
Suppose $R$ is an $\mathbb{N}_0$-graded Hopf algebra in  $\HH$. Let $X$ be an  $R$-module in $\mathcal{C}$ and $Y$ be an $R$-comodule in $\mathcal{C}$. For $n \geq 0$, we define:
\begin{align*}
\mathcal{F}_nX &= \{x \in X \mid R(i) \rhd x = 0 \text{ for all } i > n\}, \ \\
\mathcal{F}^nY &= \{y \in Y \mid \delta_Y(y) \in \bigoplus_{i=0}^n R(i) \otimes Y\}. 
\end{align*}
\end{definition}
 With the above notations, the functor $\Omega$ relates the operator $\mathcal{F}_n$ to $\mathcal{F}^n$.
\begin{lemma}\label{C-lemOmega}
    Let $(\bB(N^*),\BN,\operatorname{ev})$ be a Hopf pairing in $\HH$ with the equivalence of tensor category:
    $$ \Omega:{^{\bB(N)}_{\bB(N)}\mathcal{YD}(\mathcal{C})} \cong {^{\bB(N^*)}_{\bB(N^*)}\mathcal{YD}(\mathcal{C})} .$$ Let $V \in {^{\bB(N)}_{\bB(N)}\mathcal{YD}(\mathcal{C})}$,\par 
    (1) $\mathcal{F}_n \Omega(V)=\mathcal{F}^nV$ for all $n\geq 0$.\par (2) $\mathcal{F}^n\Omega(V)=\mathcal{F}_n(V)$ for all $n\geq 0$.\par
    (3) Suppose  $V$ is  a $\mathbb{Z}$-graded object in ${^{\bB(N)}_{\bB(N)}\mathcal{YD}(\mathcal{C})}$ , then $\Omega(V)$ is a $\mathbb{Z}$-graded object in ${^{\bB(N^*)}_{\bB(N^*)}\mathcal{YD}(\mathcal{C})} $, where the grading of  $\Omega(V)$ is given by $\Omega(V)(n)=V(-n)$ for all $n\in \mathbb{Z}$.
\end{lemma}
\begin{proof}
Recall that the braided monoidal equivalence of abelian category 
$$\Omega: {^{\bB(N)}_{\bB(N)}\mathcal{YD}(\mathcal{C})} \cong {^{\bB(N^*)}_{\bB(N^*)}\mathcal{YD}(\mathcal{C})}, $$ sends 
$$ (V,\rho_{\bB(N)},\delta_{\bB(N)}) \mapsto (V,\rho_{\bB(N^*)},\delta_{\bB(N^*)}).$$
It is induced by the non-degenerate Hopf pairing 
$$ \operatorname{ev}^{-}:=\operatorname{ev}\circ c^{-1}_{\bB(N^*),\bB(N)}\circ (\mathcal{S}^{-1}_{\bB(N)}\otimes 
\mathcal{S}^{-1}_{\bB(N^*)}):\BN \otimes \bB(N^*)\rightarrow \mathbbm{k}.$$\par 
(1) By naturality of braiding and associative isomorphism,
\begin{align*}
    &\rho_{\bB(N^*)}\\&=(\operatorname{id}_V\otimes (\operatorname{ev}^{-}\circ (\operatorname{id}_{\bB(N)}\otimes (\mathcal{S}_{\bB(N^*)}^{-1})^2)))\circ a_{V,\bB(N),\bB(N^*)}\circ (c^{-1}_{\mathcal{B}(N),V}\otimes \operatorname{id}_{\mathcal{B}(N^*)}) \circ (\delta_{\bB(N)}\otimes \operatorname{id}_{\mathcal{B}(N)}) \circ c^{-1}_{V,\mathcal{B}(N^*)}\\
    &\overset{(\ref{C-3.0})}{=}(\operatorname{id}_V\otimes \operatorname{ev}^{-}\circ c^{-1}_{\bB(N),\bB(N^*)} \circ (\mathcal{S}_{\bB(N^*)}^{-1}\otimes \mathcal{S}_{\bB(N)}^{-1}))\circ c^{-1}_{V,\bB(N^*)\otimes \bB(N)}\circ a^{-1}_{\bB(N^*),\bB(N),V} \circ (\operatorname{id}_{\bB(N^*)}\otimes \delta_{\bB(N)})\\
    &\overset{(\ref{C-3.15})}{=}(\operatorname{id}_V\otimes (\operatorname{ev}^{-})^{-})\circ c^{-1}_{V,\bB(N^*)\otimes \bB(N)}\circ a^{-1}_{\bB(N^*),\bB(N),V} \circ (\operatorname{id}_{\bB(N^*)}\otimes \delta_{\bB(N)})\\
    &=((\operatorname{ev}^{-})^{-}\otimes \operatorname{id}_V)\circ a^{-1}_{\bB(N^*),\bB(N),V} \circ (\operatorname{id}_{\bB(N^*)}\otimes \delta_{\bB(N)}).
\end{align*}
Here $(\operatorname{ev}^{-})^{-}=\operatorname{ev}\circ c^{-1}_{\bB(N^*),\bB(N)}\circ c^{-1}_{\bB(N),\bB(N^*)}\circ ((\mathcal{S}^{-1}_{\bB(N^*)})^2\otimes 
(\mathcal{S}^{-1}_{\bB(N)})^2)$.\par 
     Define an auxiliary map
    $\rho':=(\operatorname{ev}\otimes \operatorname{id}_V) \circ a^{-1}_{\bB(N^*),\bB(N),V} \circ (\operatorname{id}_{\bB(N^*)}\otimes \delta_{\bB(N)})$, which is derived from $\rho_{\bB(N^*)}$.  By [\citealp{rootsys}, Corollary 3.3.4, Lemma 12.2.11], 
    $(V, \rho')$ is still a $\bB(N^*)$-module in $\mathcal{C}$ and
    $$\mathcal{F}_n\Omega(V)=\mathcal{F}_n(V,\rho').$$
    Now for fixed $n \geq 0$, the  kernel of the induced map:
    \begin{equation}\label{C-2.3}
        \bB(N^*)\otimes V \longrightarrow \operatorname{Hom}(\bB(N)(n),V), \ \ a\otimes v\mapsto(b \mapsto \Phi(a_{-1},b_{-1},v_{-1})^{-1}\langle a_0,b_0\rangle v_0)
    \end{equation} 
    is $\bigoplus_{m\neq n}\bB(N^*)(m) \otimes V$. This means,  if $v \in \mathcal{F}^n V$, then for each $i >n$, $\rho'(\bB(N^*)(i)\otimes v)=0$ by (\ref{C-2.3}). On the other hand, if $v \in \mathcal{F}_n(V,\rho')$, this will imply $\delta(v)\in \bigoplus_{i=0}^n\BN(i)\otimes V$,  which completes the proof of (1): $\mathcal{F}_n \Omega(V)=\mathcal{F}_n(V,\rho')=\mathcal{F}^nV$.\par
  (2)  For the second statement, recall $\delta_{\bB(N^*)}$ is given by 
    $$ \delta_{\bB(N^*)}=(\mathcal{S}_{\bB(N^*)}^2\otimes \operatorname{id}_V) \circ c_{{\bB(N^*)},V} \circ  c_{V,{\bB(N^*)}} \circ  (\operatorname{id}_{\bB(N^*)} \otimes \rho_{\bB(N)}) \circ a_{{\bB(N^*)},{\bB(N)},V} \circ ((\operatorname{ev}^-)'\otimes \operatorname{id}_V ). $$
    We denote $\delta'=(\operatorname{id}_{\bB(N^*)} \otimes \rho_{\bB(N)}) \circ a_{{\bB(N^*)},{\bB(N)},V} \circ ((\operatorname{ev}^-)'\otimes \operatorname{id}_V ) $, which origins from $\delta_{\bB(N)}$. Again by [\citealp{rootsys}, Corollary 3.3.6, Lemma 12.2.11]
$$\mathcal{F}^n(V,\delta')=\mathcal{F}^n\Omega(V).$$
 A similar argument to that in (1) shows that $\mathcal{F}_n(V)=\mathcal{F}^n(V,\delta')$.  This combines together shows  $\mathcal{F}_n(V)=\mathcal{F}^n(V,\delta')=\mathcal{F}^n\Omega(V)$.\par 
(3) We regard $\bB(N)$ and $\bB(N^*)$ as $\mathbb{Z}$-graded Hopf algebras in $\GG$. Let $V=\bigoplus_{n \in \mathbb{Z}}V(n)$ be its gradation, and set  $\Omega(V)(n)=V(-n)$ for all $n \in \mathbb{Z}$. Then  for $m\geq 0$, by definition of $\rho_{\bB(N^*)}$, we have
$$ \rho_{\bB(N^*)} ( \bB(N^*)(m)\otimes \Omega(V)(n) )=\rho_{\bB(N^*)} (\bB(N^*)(m)\otimes V(-n))\subseteq V(-m-n)=\Omega(V(m+n)).   $$
On the other hand, by definition of $\delta_{\bB(N^*)}$
$$ \delta_{\bB(N^*)}(\Omega(V)(n))=\delta_{\bB(N^*)}(V(-n))\subseteq \bigoplus_{i\in \mathbb{Z}}\bB(N^*)(i)\otimes V(i-n)=\bigoplus_{i \in \mathbb{Z}} \bB(N^*)(i)\otimes \Omega(V(n-i)).$$
The above two equations imply that
 $\Omega(V)$ is a well-defined  $\mathbb{Z}$-graded object in ${^{\bB(N^*)}_{\bB(N^*)}\mathcal{YD}(\mathcal{C})}$.
\end{proof}

\section{Projections of Nichols algebras }
Let $(\mathbbm{k}G,\Phi)$ be a pointed coquasi-Hopf algebra, where $G$ is a finite group, $\Phi$ is a $3$-cocycle on $G$. In this section, we study projection of Nichols algebras in $\GG$, which will play an important role when considering reflection of simple Yetter-Drinfeld modules. The proofs of  some lemmas and theorems in this section, while are conceptually parallel to those in the Hopf algebra setting \cite{HSdualpair}, take a different path due to the non-strict monoidal structure.
\subsection{The structure  of the space of coinvariant $K$}
 To study the projection properties of Nichols algebras, we must first examine their structure in the category $\GG$. \par For any $M,N \in \GG$, there is a canonical surjection of braided Hopf algebra in $\GG$
$$ \pi_{\bB(N)}:\bB(M\oplus N) \rightarrow \bB(N), \  \  \ \pi_{\bB(N)}\mid_N=\operatorname{id}, \ \ \ \pi_{\bB(N)}\mid_M=0. $$  This surjection naturally induces a canonical projection of coquasi-Hopf algebras:
$$ \pi=\pi_{\bB(N)}\#\operatorname{id}_{\kG}:\bB(M\oplus N)\#\kG\rightarrow \bB(N)\# \kG.$$
To simplify our notation, we introduce the following conventions:
$$ \mathcal{A}(M \oplus N):=\bB(M\oplus N)\#\kG,\ \mathcal{A}(N):=\bB(N)\# \kG.$$
Building upon this projection structure, we can now construct the space of coinvariants. There is a natural injection $\iota: \mathcal{A}(N)\rightarrow \mathcal{A}(M\oplus N)$ such that $\pi\circ \iota=\operatorname{id}_{\mathcal{A}(N)}$. \par 
Let 
$$K= \mathcal{A}(M \oplus N)^{\operatorname{co}\AN}$$
be the space of right $\AN$-coinvariant elements with respect to the projection $\pi$. By virtue of Lemma \ref{C-Lemma 1.6}, 
$K$ inherits the structure of a braided Hopf algebra in $\BNG$ 
with $\AN$-coaction:
$$ \delta: K \rightarrow \AN \otimes K , \ \ \  X_{-1} \otimes X_0=:\delta(X):=\pi(X_1)\otimes X_2,$$
and  $\AN$-action:
\begin{equation}\label{C-3.1}
    \operatorname{ad}: \AN \otimes K \rightarrow K, \ \ \ \  A\otimes X \mapsto \operatorname{ad}(A)(X)=\Phi(A_1X_{-1}, \mathbb{S}(A_3)_1,A_4)(A_2X_0)\mathbb{S}(A_3)_2.
\end{equation}
   Here, $\mathbb{S}$ denotes preantipode of $\AN$.\par 

Denote $\pi_{\kG}: \AMN\rightarrow \kG$, and $\pi'_{\kG}:\AN\rightarrow \kG$ be the natural projection. These maps satisfy a compatibility condition, namely, there exists a commutative diagram relating these projections:
\begin{center}
	\begin{tikzpicture}
		\node (A) at (-2,0) {$\AMN$};
		\node (B) at (3,0) {$\AN$};
		\node (C) at (3,-2) {$\kG$};
		\node (D) at (3,2) {$\AN$};
  \draw[->] (A) --node [above ] {$\pi$} (B);
		\draw[->] (B) --node [ right] {$\pi'_{\kG}$} (C);	
		\draw[->] (A) --node [below ] {$\pi_{\kG}$} (C);
  \draw[->](D)--node [above left]{$\iota$}(A);
  \draw[->](D)--node[right]{$=$}(B);
	\end{tikzpicture}
\end{center}

   \begin{lemma}
     The space $K$ coincides with $\mathcal{B}(M \oplus N)^{\text{co} \mathcal{B}(N)}$, which is precisely the space of right $\mathcal{B}(N)$-coinvariant elements with respect to $\pi_{\mathcal{B}(N)}$.
   \end{lemma}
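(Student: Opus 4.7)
The strategy is to show that the inclusion $X \mapsto X \otimes 1$ restricts to a bijection $\bB(M \oplus N)^{\operatorname{co}\bB(N)} \xrightarrow{\sim} K$. The main ingredient is the explicit coproduct in the bosonization from Example~\ref{ex 1.7}: for $X \in {^x R}$ with $R = \bB(M \oplus N)$, writing $\Delta_R(X) = \sum X^1 \otimes X^2$ with $X^2 \in {^{x^2} R}$, normalization of the $3$-cocycle $\Phi$ forces $\Phi^{-1}(x^1, x^2, 1) = 1$ and hence
$$
\Delta(X \otimes 1) = \sum X^1 \otimes x^2 \otimes X^2 \otimes 1 \in \AMN \otimes \AMN.
$$
This formula entangles the coproduct of $R$ with the $G$-grading, which appears as an explicit group-like factor in the second tensor slot; controlling this entanglement is the crux of the proof.

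The first step is to show $K \subseteq \bB(M \oplus N) \otimes 1$. Given $Y \in K$, composing the defining identity $(\operatorname{id} \otimes \pi)\Delta(Y) = Y \otimes 1_{\AN}$ with $\operatorname{id} \otimes \pi'_{\kG}$ and using the commutative diagram of projections yields $(\operatorname{id} \otimes \pi_{\kG})\Delta(Y) = Y \otimes 1$, i.e., $Y$ is right $\kG$-coinvariant in $\AMN$ via the coaction $(\operatorname{id} \otimes \pi_{\kG})\Delta$. A direct calculation using the displayed formula shows that the right $\kG$-degree of $X \otimes h$ equals $h$, so coinvariance forces the $\kG$-factor of $Y$ to be trivial. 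Thus $Y = X \otimes 1$ for some $X \in \bB(M \oplus N)$.

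For the second step, applied to $Y = X \otimes 1$, the defining identity of $K$ becomes
$$
\sum (X^1 \otimes x^2) \otimes (\pi_{\bB(N)}(X^2) \otimes 1) = (X \otimes 1) \otimes (1 \otimes 1).
$$
I would decompose both sides according to the $G$-degree $b := x^2$ appearing in the group-like slot. Since the elements of $G$ are linearly independent in $\kG$, the equality splits into conditions indexed by $b$: for $b = 1$, $\sum_{X^2 \in {^1 R}} X^1 \otimes \pi_{\bB(N)}(X^2) = X \otimes 1$, and for $b \neq 1$, $\sum_{X^2 \in {^b R}} X^1 \otimes \pi_{\bB(N)}(X^2) = 0$. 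Because $\pi_{\bB(N)}$ preserves $G$-degrees, these conditions reassemble into the single identity $(\operatorname{id} \otimes \pi_{\bB(N)})\Delta_R(X) = X \otimes 1$ in $R \otimes \bB(N)$, which is precisely the coinvariance of $X$ in $\bB(M \oplus N)$. All steps being reversible, this establishes both inclusions. I expect the main obstacle to be purely bookkeeping: the non-strict monoidal structure of $\GG$ threatens to introduce associators whenever we expand compositions, but normalization of $\Phi$ renders all relevant associators trivial in the expressions above.
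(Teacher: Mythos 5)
Your proposal is correct and follows essentially the same route as the paper: first use the projection to $\kG$ to show $K\subseteq \bB(M\oplus N)\otimes 1$, then identify the coinvariance condition in $\AMN$ with that in $\bB(M\oplus N)$. The only cosmetic difference is that where you split the identity by the $G$-degree sitting in the group-like slot (using that $\pi_{\bB(N)}$ preserves $G$-degrees), the paper applies $\operatorname{id}\#\varepsilon$ to the first factor and invokes $\kG$-colinearity of $\pi_{\bB(N)}$ for the converse — the same bookkeeping in different clothing.
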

   \begin{proof}
       Let $X \in K$, then $X_1 \otimes \pi_{\kG}(X_2)=X_1\otimes \pi'_{\kG}\circ \pi(X_2)=X\otimes 1$. Hence $X \in  \AMN^{\operatorname{co}\kG}=\bB(M\oplus N)$. Now, we assume $X$ is homogeneous. Under this assumption, we have
       $$ X\otimes 1=(\operatorname{id}\otimes \pi)\Delta_{\AMN}(X)=X^1\#x^2 \otimes \pi(X^2)=X^1\#x^2 \otimes \pi_{\bB(N)}(X^2).$$
       On the other hand, consider the projection $\gamma=\operatorname{id}\# \varepsilon:\AMN \rightarrow \bB(M \oplus N)$. Applying this projection yields
        $$ 
       X \otimes 1=(\gamma\otimes \operatorname{id} )(X \otimes 1)=X^1 \otimes \pi_{\bB(N)}(X^2)=(\operatorname{id}\otimes \pi_{\bB(N)})(\Delta_{\bB(M\oplus N)}(X)).$$
       This demonstrates that $X \in \bB(M\oplus N)^{\operatorname{co}\bB(N)}$, completing one direction of the proof.\par 
       Conversely, let $X \in \bB(M \oplus N)$ be a homogeneous element with $$ (\operatorname{id}\otimes \pi_{\bB(N)})\Delta_{\bB(M\oplus N)}(X)=X^1 \otimes \pi_{\bB(N)}(X^2)=X \otimes 1.$$
       Since $\pi_{\bB(N)}$ is $\kG$-colinear, we can deduce that
       $$  X\otimes 1 \otimes 1=X^1 \otimes x^2 \otimes \pi_{\bB(N)}(X^2).$$
       which implies $ X \otimes 1=X^1\#x^2 \otimes \pi_{\bB(N)}(X^2).$
       Furthermore, observe that $$(\operatorname{id}\otimes \pi)\Delta_{\AMN}(X\#1)=X^1\#x^2\otimes \pi(X^2)=X^1\#x^2\otimes \pi_{\bB(N)}(X^2)=X\otimes 1.$$
       Therefore, we conclude that $X \in K$, establishing the reverse inclusion.
       
   \end{proof}
%The bosonization $\AN$ is a $\mathbb{Z}$-graded coquasi-Hopf algebra with $\operatorname{deg}(N)=1$ and $\operatorname{deg}\kG=0$. Furthermore, we always view the bosonization of Nichols algebra in $\GG$ as graded coquasi-Hopf algebra in this way.
Having characterized the coinvariant space $K$, we now turn our attention to its primitive elements. Indeed, the space of primitive elements of $K$ remains an object in $\BNG$.
\begin{lemma}\label{C-lem3.9}
    The space of primitive elements $P(K)=\{ x\in K \mid \Delta_K(x)=x\otimes 1+1\otimes x\}$ is a subobject of $K$ in $\BNG$.
\end{lemma}
\begin{proof}
Let $\Delta_K: K\rightarrow K\otimes K$ be its comultiplication map and $\widetilde{\Delta}:K \rightarrow K \otimes K, \ x \mapsto 1\otimes x+x \otimes 1$.
    We consider the following map 
    $$ \widehat{\Delta}:K\rightarrow K\otimes K,\ \ \  x \mapsto \Delta_K(x)-\widetilde{\Delta}(x).$$
    $\widehat{\Delta}$ is a comodule map since  both $\Delta_K$    and 
    $\widetilde{\Delta}$ is colinear. Furthermore, for all $A \in \BNG$ and $X \in K$, \begin{align*}
        \widehat{\Delta}(\operatorname{ad}(A)(X))&=\Delta_K(\operatorname{ad}(A)(X))-\operatorname{ad}(A)(X)\otimes 1-1\otimes\operatorname{ad}(A)(X)\\&=\operatorname{ad}(A)(\Delta_K(X))-\operatorname{ad}(A)(1\otimes X+X\otimes 1)
    \\&=\operatorname{ad}(A)(\Delta_K(X)-\widetilde{\Delta}(X)).
    \end{align*}
    Therefore $\widehat{\Delta}$ is a morphism in $\BNG$.
    We conclude that $P(K)$ is a subobject of $K$ in $\BNG$, because $P(K)$ is precisely the kernel of the map $\widehat{\Delta}$.
\end{proof}

With the structural properties of $P(K)$ established, we now investigate the explicit form of $K$ through its relation to the adjoint map. Since $K = \mathcal{B}(M \oplus N)^{\text{co}\mathcal{B}(N)}$, it is natural to consider $\text{ad}(\mathcal{B}(N))(M)$.

\begin{lemma}\label{C-lem3.2}
    Let $M,N \in \GG$, $K=(\AMN)^{\operatorname{co}\AN}$.
   The standard $\mathbb{N}_0$-grading of $\bB(M\oplus N)$ induces an $\mathbb{N}_0$-grading on 
    $$L:=\operatorname{ad}(\bB(N))(M)=\bigoplus_{n \in \mathbb{N}}\operatorname{ad}(N)^{n}(M)$$
    with degree $\operatorname{deg}(\operatorname{ad}(N)^n(M))=n+1$. Then $L$ is a $\mathbb{N}_0$-graded object in $\BNG$. Moreover, $L \subseteq K$ is a subobject of $K$ in $\BNG$.\par 
\end{lemma}
\begin{proof}
     Let $A \in N$ and $X \in \bB(M \oplus N)$ be homogeneous elements.
By equation (\ref{C-1.27}), we have
    $$ \operatorname{ad}(A)(X)=AX-\operatorname{ad}(a)(X)A.$$
The element $\operatorname{ad}(A)(X)$ is of degree $\operatorname{deg}(X)+1$ in the Nichols algebra $\mathcal{B}(M\oplus N)$. Moreover, we have $ \operatorname{ad}(N)^{n}(M) \cap \operatorname{ad}(N)^m(M)=0$ for $ n\neq m$. This is implies the direct sum decomposition of $L$. \par 
Since $M \subseteq K$ and $K \in \BNG$, we conclude that $L=\operatorname{ad}(\BN)(M)\subseteq K$.
Note that\\ $\operatorname{ad}(\BN)(M)\subseteq \operatorname{ad}(\AN)(M).$ Conversely, since $M \in \GG$, we have 
    $$ \operatorname{ad}(\AN)(M) \subseteq \operatorname{ad}(\BN)(\kG \rhd M) \subseteq \operatorname{ad}(\BN)(M).$$
  Hence $L=\operatorname{ad}(\AN)(M)$. \par 
 It is clear that the operation 
    $$ \operatorname{ad}:\AN \otimes L \rightarrow L$$ is well-defined since  $\operatorname{ad}$ satisfies the first  axiom of Yetter-Drinfeld modules.
Furthermore, the operation
$ \operatorname{ad}:\AN \otimes L\rightarrow L$ is $\mathbb{N}_0$-graded. 
Next, we want to show $L$ is a $\AN$-comodule. The comodule structure is given by 
$ \delta_K$. 
Now we assume $A \in \BN$ and $X \in M$ are homogeneous elements. A direct computation shows:
\begin{align*}
\delta_K(\operatorname{ad}(A)(X))&=p((\operatorname{ad}(A_3)(X))_{-1},A_4)q(A_1x,\mathcal{S}(A_6))(A_2x)\mathcal{S}(A_5)\otimes (\operatorname{ad}(A_3)(X))_0.
\end{align*}
Note that the condition 
$$  \operatorname{ad}(A_3)(X)_{-1}\notin G \Longrightarrow p((\operatorname{ad}(A_3)(X))_{-1},A_3) = 0.$$
Therefore, since $\operatorname{ad}(A_3)(X)$ is homogeneous,  we have
$$\delta_K(\operatorname{ad}(A)(X))=p(\operatorname{deg}(\operatorname{ad}(A_3)(X)),A_4)q(A_1x,\mathcal{S}(A_6))(A_2x)\mathcal{S}(A_5)\otimes (\operatorname{ad}(A_3)(X)),   $$
where $\operatorname{deg}(\operatorname{ad}(A_2)(X)) \in G$ represents its associated degree.
This shows $\delta_K:L  \rightarrow \AN \otimes L$ is well-defined. The third axiom holds automatically since $L \subseteq K$. Thus $L$ is a $\mathbb{N}_0$-graded object in $\BNG$ and  is a subobject of $K$.\par
\end{proof}

\begin{lemma}\label{C-lem3.3}
    For all $X \in L$, we have  $\Delta_{\bB(M\oplus N)}(X) - X \otimes 1 \in \bB( N)\otimes L$. \par 
\end{lemma}
\begin{proof} Recall $L=\bigoplus_{n \in \mathbb{N}}\operatorname{ad}(N)^{n}(M)$.
 We proceed by induction on $n$. Now let $A \in N$ and $X \in M$ be homogeneous elements. A direct computation yields:
\begin{align*}
\Delta_{\BMN}(\operatorname{ad}(A)(X))&=\Delta_{\BMN}(AX-(\operatorname{ad}(a)(X)A))\\
&=AX\otimes 1+1\otimes AX+A\otimes X+\operatorname{ad}(a)(X)\otimes A\\&-\operatorname{ad}(a)(X)A\otimes 1-1\otimes \operatorname{ad}(a)(X)A-\operatorname{ad}(a)(X)\otimes A-\operatorname{ad}(axa^{-1})(A)\otimes \operatorname{ad}(a)(X)\\
&=\operatorname{ad}(A)(X)\otimes 1+1\otimes\operatorname{ad}(A)(X)+A\otimes X-\operatorname{ad}(axa^{-1})(A)\otimes \operatorname{ad}(a)(X).
\end{align*}
Note that $X=\operatorname{ad}(1)(X) \in L$,
therefore $\Delta_{\BMN}(\operatorname{ad}(A)(X))- \operatorname{ad}(A)(X)\otimes 1\in \BN \otimes L$.\par 
For a fixed $m \in \mathbb{N}_0$. We assume that for all $Y \in \operatorname{ad}(N)^{m'}(M)$, $1\leq m' \leq m$, we have 
 $$ \Delta_{\bB(M\oplus N)}(Y) - Y \otimes 1 \in \bB( N)\otimes L.$$
 Now let  $A \in N $ and  $X \in \operatorname{ad}(N)^{m}(M)$ be homogeneous, we have
\begin{align*}
    &\Delta_{\BMN}(\operatorname{ad}(A)(X))=\Delta_{\BMN}(AX-(\operatorname{ad}(a)(X))A)\\
    &=(A \otimes 1+1 \otimes A)(X^1 \otimes X^2)\\&-\frac{\Phi(a,x^1,x^2)\Phi(ax^1a^{-1},ax^2a^{-1},a)}{\Phi(ax^1a^{-1},a,x^2)}(\operatorname{ad}(a)(X^1)\otimes \operatorname{ad}(a)(X^2))(1\otimes A+A \otimes 1)\\
&=\operatorname{ad}(A)(X)\otimes 1+ \sum_{X^1 \otimes X^2 \neq X \otimes 1}\Phi(a,x^1,x^2)AX^1\otimes X^2+\frac{\Phi(a,x^1,x^2)}{\Phi(ax^1a^{-1},a,x^2)}\operatorname{ad}(a)(X^1)\otimes AX^2\\&-\frac{\Phi(a,x^1,x^2)}{\Phi(ax^1a^{-1},a,x^2)}\operatorname{ad}(a)(X^1)\otimes \operatorname{ad}(a)(X^2)A\\ &-\frac{\Phi(a,x^1,x^2)\Phi(ax^1a^{-1},ax^2a(ax^2)^{-1},a)}{\Phi(ax^1a^{-1},a,x^2)}\operatorname{ad}(a)(X^1)\operatorname{ad}(ax^2a^{-1}) (A)\otimes \operatorname{ad}(a)(X^2)
\\
&=\operatorname{ad}(A)(X)\otimes 1+\sum_{X^1 \otimes X^2 \neq X \otimes 1}\Phi(a,x^1,x^2)AX^1\otimes X^2+\frac{\Phi(a,x^1,x^2)}{\Phi(ax^1a^{-1},a,x^2)}\operatorname{ad}(a)(X^1)\otimes \operatorname{ad}(A)(X^2)\\
&-\frac{\Phi(a,x^1,x^2)\Phi(ax^1a^{-1},ax^2a(ax^2)^{-1},a)}{\Phi(ax^1a^{-1},a,x^2)}\operatorname{ad}(a)(X^1)\operatorname{ad}(ax^2a^{-1}) (A)\otimes \operatorname{ad}(a)(X^2).
\end{align*}
By assumptions, $$ \Delta_{\bB(M\oplus N)}(X) - X \otimes 1 \in \bB( N)\otimes L.$$ Therefore the second term lies in $\BN \otimes{ L}$.
Since $A \in N$, $\operatorname{ad}(a)(X^1)\in \BN$ and 
$$\operatorname{ad}(A)(X^2)\in \operatorname{ad}(N)(L)=\operatorname{ad}(N)(\bigoplus_{n\in \mathbb{N}_0}\operatorname{ad}(N)^n(M)) \in L,$$
the third term lies in $\BN \otimes{ L}$.
For the last term,  by Lemma \ref{C-lem3.2}, we have  $L \in \BNG$. Hence it belongs to $\BN \otimes{ L}$ as well, which completes the induction and proves the lemma.

\iffalse
    Let $A \in \BN$ and $X \in M$, then by axiom of preantipode
    \begin{align*}
        \Delta_{\AMN}(\operatorname{ad}(A)(X))&=\Delta_{\AMN}(\Phi(A_{1}\pi(X_{1}),\mathbb{S}(A_3)_1,A_4)(A_2X_2)\mathbb{S}(A_3)_2)\\
       &=\Phi(A_1\pi(X_{1}),\mathbb{S}(A_4)_1,A_5)(A_2X_2)\mathbb{S}(A_4)_2 \otimes (A_3X_3)\mathbb{S}(A_4)_3\\
       &=\Phi(A_1x,\mathbb{S}(A_4)_1,A_5)(A_2X)\mathbb{S}(A_4)_2  \otimes A_3 \mathbb{S}(A_4)_3\\ &+\Phi(A_1x,\mathbb{S}(A_4)_1,A_5)(A_2x)\mathbb{S}(A_4)_2 \otimes (A_3X)\mathbb{S}(A_4)_3\\
       &=\Phi(A_1x,\mathbb{S}(A_3)_1,A_4)(A_2X)\mathbb{S}(A_3)_2\otimes 1\\&+\frac{\Phi(A_1x,\mathbb{S}(A_4)_1,A_5)}{\Phi(A_2x,\mathbb{S}(A_4)_2,A_5)}(A_2x)\mathbb{S}(A_4)_2 \otimes \Phi(A_2x,\mathbb{S}(A_4)_2,A_5)(A_3X)\mathbb{S}(A_4)_3 .
    \end{align*}
    Thus $\Delta(\operatorname{ad}(A)(X)) \in \operatorname{ad}(A)(X) \otimes 1+ \AN \otimes L.$
    Moreover, Since $\bB(M\oplus N) = \AMN^{\operatorname{co}\kG}$, then for $Y \in L$,
    $$ \Delta(Y)=\operatorname{id}\# \varepsilon \otimes \operatorname{id}(\Delta(Y)) \in  Y \otimes 1 + \bB( N)\otimes L.$$\fi
    \end{proof}
\subsection{The semisimplicity of  the object $L$}
We now turn to the structural analysis of the object $L$, whose properties are useful to our subsequent arguments regarding Nichols algebras.\par 
Recall that a graded coalgebra is a coalgebra \( C \) provided with a grading \( C = \bigoplus_{m \in \mathbb{N}_0} C^m \) such that \( \Delta(C^m) \subseteq \bigoplus_{i+j=m} C^i \otimes C^j \). 
Let \( \Delta_{i,j}: C^m \rightarrow C^i \otimes C^j \) denote the map \( \text{pr}_{i,j} \Delta \), where \( m = i + j \) and $\operatorname{pr}_{i,j}:C \otimes C \rightarrow C^i\otimes C^j$. More generally, if \( i_1, \ldots, i_n \in \mathbb{N}_0 \) and \( i_1 + \cdots + i_n = m \), then \( \Delta_{i_1, \ldots, i_n} \) is defined by the commutative diagram
  \begin{center}
	\begin{tikzpicture}
		\node (A) at (0,0) {$C^m$};
		\node (B) at (5,0) {$\bigoplus\limits_{j_1+j_2+\cdots+j_n=m}C^{j_1}\otimes C^{j_2} \otimes \cdots  \otimes C^{j_n}$};
		\node (C) at (0,-2) {$C^{i_1}\otimes C^{i_2}\otimes \cdots \otimes C^{i_n}$};
		\draw[->] (A) --node [above] {$\Delta^{n-1}$} (B);
		\draw[->] (B) --node [ below right] {$\operatorname{pr}_{i_1,..,i_n}$} (C);	
		\draw[->] (A) --node [left] {$\Delta_{i_1,..,i_n}$} (C)	;
		
	\end{tikzpicture}
\end{center}
This framework naturally leads us to consider the maximal coideal in a graded coalgebra. For notational convenience, we write $\Delta_{1,1,...,1}$ by $\Delta_{1^n}$.
Let \( I_C(n) = \ker(\Delta_{1^n}) \) for all \( n \geq 1 \), and
\[
I_C = \bigoplus_{n \geq 1} I_C(n) = \bigoplus_{n \geq 2} I_C(n).
\]
Note that \( I_C(1) = 0 \) since \( \Delta_1 = \text{id} \). \par 
\begin{lemma}\textup{[\citealp{rootsys}, Lemma 1.3.13, Proposition 1.3.14]\label{C-lem 1.1}}\\
\text{$(1)$} Assume that $C$ is connected, that is, $C^0$ is one dimensional. Then $I_C \subseteq C$ is a coideal of $C$.\\ 
\text{$(2)$} 
    \text{Assume that \( C \) is connected. Then the following are equivalent.}\par 
        \text{$(a)$} \( C \) is strictly graded, that is, $P(C)=C^1$.\par 
    \text{$(b)$}  For all \( n \geq 2 \), \( \Delta_{1^n}: C^n \to (C^{1})^{\otimes n} \) is injective.\par 
       \text{$(c)$} For all \( n \geq 2 \), \( \Delta_{1,n-1}: C^n  \to C^1  \otimes C^{n-1} \) is injective.\par 
\text{$(d)$} \( I_C = 0 \).
       \iffalse (iii) Assume that \( C \) is connected.    
        \begin{enumerate}
    \item[(a)] The coideal \( I_C \) is the only graded coideal \( I \) of \( C \) such that,
         \( C / I \) is strictly graded, and
         \( \pi(1): C^{1} \to (C / I)^{1} \) is bijective, where \( \pi: C \to C / I \) is the canonical map.
    \item[(b)] The coideal \( I_C \) is the largest coideal of \( C \) contained in \( \bigoplus_{n \geq 2} C^n \).
    \item[(c)] The coideal \( I_C \) is the only coideal \( I \) of \( C \) contained in \( \bigoplus_{n \geq 2} C^n \) such that \( P(C / I) = C^{1} \).
    \item[(d)] Let \( D \) be an \( \mathbb{N}_0 \)-graded coalgebra and \( \pi: C \to D \) a surjective graded coalgebra map such that \( \pi(1): C^{1} \to D^{1} \) is bijective. Then there is exactly one graded coalgebra map \( \tilde{\pi}: D \to C/I_C \) with \( \pi_C = \tilde{\pi} \pi \).
\end{enumerate}\fi
\end{lemma}
The following technical result provides the necessary machinery to analyze the semisimplicity properties of $L$.
\begin{lemma}\label{C-lem3.4}\textup{[\citealp{rootsys},Proposition 13.2.3}]
Let $C$ be an $\mathbb{N}_0$-graded coalgebra, $X$ an $\mathbb{N}_0$-graded left $C$-comodule, and $Y$ a $C$-subcomodule of $X$. Let $k > 0$ be an integer. Assume that the map $\delta_{n-k,k}^X: X(n) \to C(n-k) \otimes X(k)$ is injective for all $n\geq  k$, and $Y$ is not contained in $\bigoplus_{i=0}^{k-1} X(i)$.
Then $Y \cap \bigoplus_{i=0}^k X(i) \neq 0$.
\end{lemma}
\iffalse
    \begin{proof}
By assumptions, there is an element $0 \neq y = \sum_{i=0}^{n} x(i) \in Y$, $n \geq k$, with homogeneous components $x(i) \in X(i)$ for all $0 \leq i \leq n$, and $x(n) \neq 0$. Let $x = x(n)$, $z = y - x$. Since $\delta_{n-k,k}$ is injective,
\[
0 \neq (\pi_{n-k} \otimes \pi_k)(\delta(x)) \in C(n-k) \otimes X(k).
\]
Hence there exists $f \in C^*$ with $0 \neq f(x_{(-1)})x_{(0)} \in X(k)$ and $f(C(i)) = 0$ for all $i \neq n-k$. Note that $f(z_{(-1)})z_{(0)} \in \bigoplus_{i=0}^{k-1} X(i)$. Thus
\[
f(y_{(-1)})y_{(0)} = f(x_{(-1)})x_{(0)} + f(z_{(-1)})z_{(0)} \in \bigoplus_{i=0}^{k} X(i)
\]
is a non-zero element in $Y \cap \bigoplus_{i=0}^{k} X(i)$.
\end{proof}\fi

\begin{lemma}\label{C-lem3.5}
    If $L' \subseteq L$ is a non-zero $\AN$-subcomodule, then $L' \cap M \neq 0$.
\end{lemma}
\begin{proof}
We begin by observing that $\mathcal{B}(M \oplus N)$ carries a natural left $\mathcal{B}(N)$-comodule structure in the category $\GG$, defined via the composition: $$ \delta: \bB(M\oplus N)\xrightarrow {\Delta_{\bB(M\oplus N)}}\bB(M\oplus N) \otimes \bB(M\oplus N)\xrightarrow {\pi_{\BN} \otimes \operatorname{id}} \bB(N) \otimes \bB(M\oplus N).$$
    Now let $X \in L$, then $\delta(X) \in \BN \otimes L$ by Lemma \ref{C-lem3.3}, which implies that L inherits an  $\mathbb{N}_0$-graded  $\BN$-comodule  structure via the restriction of
    $\delta: L \rightarrow \BN \otimes L$. \par 
    Suppose $X \in L(n)$ for $n \geq 1$, consider the component map $\delta_{n-1,1}: L(n)\rightarrow \BN(n-1)\otimes L(1)=\BN(n-1)\otimes M.$ By Lemma \ref{C-lem3.3}, $\Delta_{\bB(M\oplus N)}(X)=X\otimes 1 + \BN \otimes L$,  it follows that
    $$ \delta_{n-1,1}(X)=(\Delta_{\bB(M\oplus N)})_{n-1,1}(X).$$
    As Nichols algebras are strictly graded  connected coalgebras, the map $(\Delta_{\mathcal{B}(M \oplus N)})_{n-1,1}$ is injective, and hence so is $\delta_{n-1,1}$. We now apply Lemma \ref{C-lem3.4} with $C=\BNG$, $X=L$, $Y=L'$ and $k=1$. Since $L(0)=0$ by its gradation. Therefore, we conclude that $L' \cap L(1)=L' \cap M\neq 0$, which completes the proof.
\end{proof}
Now we are ready to show the semisimplicity of $L$.

\begin{lemma}\label{C-irrlem}
     \text{$(1)$} Assume $M=\bigoplus_{i \in I} M_i$ is a direct sum  in $\GG$. Let $L_i=\operatorname{ad}\bB(N)(M_i)$ for all $i \in I$. Then we have such a decomposition in $\BNG$
    $$ L=\bigoplus_{i \in I} L_i.$$
  \text{$(2)$} If $M$ is irreducible in $\GG$, then $L=\operatorname{ad}\left(\BN\right)(M)$ is irreducible in  $\BNG$. 
\end{lemma}
\begin{proof}
 (1)  It suffices to show that the sum is direct. Suppose, for contradiction, that the sum is not a direct sum. Then there exists an index $k \in I$ such that
 $L_k \cap \sum_{i \neq k}L_i \neq 0$. By Lemma \ref{C-lem3.5}, this implies $$(L_k \cap \sum_{i \neq k}L_i) \cap M_k\neq 0,$$ hence $\sum_{i \neq k}L_i \cap M_k \neq 0 $. But since $M_k$ lies in degree one in $L_k$, we obtain 
$$\sum_{i \neq k} M_i \cap M_k \neq 0,$$
contradicting the directness of the sum $M = \bigoplus_{i \in I} M_i.$
    \par 
    (2) Now let $0\neq L' \subseteq L$ be a subobject in $\BNG$, then $L' \cap M \neq 0$ by Lemma \ref{C-lem3.5}.  Furthermore, $L' \cap M$ is a subobject of $M$ in $\GG$. But $M $ is irreducible in $\GG$, then $L'\cap M=M$. Hence 
    $$ L=\operatorname{ad}(\AN)(M)=\operatorname{ad}(\AN)(L'\cap M) \subseteq L',$$
    which implies $L^{\prime} = L$. We conclude that $L$ is irreducible.
\end{proof}
\subsection{Realizing $K$ as a Nichols algebra}
We now proceed to prove that $K$ possesses the structure of a Nichols algebra in an appropriate Yetter-Drinfeld module category. 
\begin{thm}\label{C-thm3.7}
    There is an isomorphism 
    \begin{equation}
        K \cong \bB(L)
    \end{equation}
    of Hopf algebras in the category $\BNG$. In particular, $P(K)=L$.
\end{thm}
To prepare for the proof of this theorem, we first establish a generation property:
\begin{lemma}\label{C-lem3.8}
    The algebra $K$ is generated by $L$ in $\BNG$.
\end{lemma}
\begin{proof}
    We have previously established that $L \subseteq K$ is a subobject of $K$ in $\BNG$. Let  $K'$ be the subalgebra of  $K$ generated by $L$ in $\BNG$. Obviously, $K'$ inherits the structure of an object in  $\BNG$ with structures induced by tensor product of $L$. \par Recall from Lemma \ref{C-lem3.2} that we have an algebra isomorphism  $K \# \BN \cong \bB(M\oplus N)$ in $\GG$ where the multiplication is induced by that of  $K \#(\BN \#\kG)$. Now we let $W$ be the image of $K' \# \BN$ under the isomorphism $K \# \BN \cong \bB(M\oplus N)$. To prove the lemma, it suffices to show that $W=\bB(M\oplus N)$. Note that $M \oplus N \subseteq W$ since $M,N \subseteq W$. Thus, we need only verify that $W$ forms a subalgebra of $\bB(M\oplus N)$. \par 
    A crucial observation is that $K'$ remains stable under  the  operation induced by $\BN$.
    Indeed, $L$ itself is stable under the  operation of $\BN$. Now consider arbitrary homogeneous elements $Y,Z\in L$ and $A \in \AN$, by (\ref{C-1.12}),
    \begin{align*}
        &\operatorname{ad}(A)(Y\otimes Z)\\&=\frac{\Phi(A_1,Y_{-1},Z_{-2})\Phi((\operatorname{ad}(A_2)(Y_0))_{-1},(\operatorname{ad}(A_4)(Z_0))_{-1},A_5))}{\Phi((\operatorname{ad(A_2)(Y_0)})_{-2},A_3,Z_{-1})}(\operatorname{ad}(A_2)(Y_0))_0\otimes(\operatorname{ad}(A_4)(Z_0))_0\\
&=\frac{\Phi(A_1,Y_{-1},Z_{-2})\Phi((\operatorname{deg}(\operatorname{ad}(A_2)(Y_0)),\operatorname{deg}(\operatorname{ad}(A_4)(Z_0)),A_5))}{\Phi((\operatorname{deg}(\operatorname{ad}(A_2)(Y_0)),A_3,Z_{-1})})\operatorname{ad}(A_2)(Y_0)\otimes \operatorname{ad}(A_4)(Z_0).
    \end{align*} 
    Since the associator vanishes if any  component does not lie in $G$.
    Note that $Y_0,Z_0 \in L$, it follows that $\operatorname{ad}(A)(Y\otimes Z)\in L \otimes L$. It follows that $K'$ remains stable under  the  operation induced by $\BN$ by induction.

    As both $K'$ and $\mathcal{B}(N)$ are subalgebras of $\mathcal{B}(M \oplus N)$, we only need to examine the case when $A \in N$ and $X \in K'$ are homogeneous. In this situation:
    \begin{align*}
        \operatorname{ad}(A)(X)=AX-\operatorname{ad}(a)(X)A.
    \end{align*}
    Since $\operatorname{ad}(A)(X)\in K'$ and $\operatorname{ad}(a)(X)A\in K' \# N$, we conclude that  $AX \in W$. Thus $W$ is a subalgebra of $\bB(M\oplus N)$, which establishes that $K$ is generated by $L$ in $\BNG$.
\end{proof}
To complete our analysis, we require an additional result concerning coalgebra filtrations. 
Let $C$ be a coalgebra. Recall that an $\mathbb{N}_0$-filtration  of $C$ is a family of subspaces $C_n$, $n \geq 0$, of $C$ satisfying
\begin{itemize}
    \item $C_n$ is a subspace of $C_m$ for all $m, n \in \mathbb{N}_0$ with $n \leq m$ 
    \item $C = \bigcup_{n \in \mathbb{N}_0} C_n$, 
    \item $\Delta_C(x) \in \sum_{i=0}^n C_i \otimes C_{n-i}$ for all $x \in C_n$, $n \in \mathbb{N}_0$.
\end{itemize}

\begin{lemma}\label{C-lem3.11}
Let $C$ be a coalgebra having an $\mathbb{N}_0$-filtration $\{C_n\}_{n\geq 0}$. Let $U$ be a non-zero comodule of $C$. Then there exists $u \in U \setminus \{0\}$ such that $\delta(u) \in C_0 \otimes U$.
\end{lemma}

\begin{proof}
Let $x \in U \setminus \{0\}$. Then there exists $n \in \mathbb{N}_0$ with $\delta(x) \in C_n \otimes U$. If $n = 0$, we are done. Assume now that $n \geq 1$ and let $\pi_0: C \to C/C_0$ be the canonical linear map. Since $C = \bigcup_{n \in \mathbb{N}_0} C_n$ is a coalgebra filtration, there is a maximal $m \in \mathbb{N}_0$ such that
\[
\pi_0(x_{-m}) \otimes \cdots \otimes \pi_0(x_{-1}) \otimes x_{0} \neq 0.
\]
Let $f_1, \ldots, f_m \in C^*$ with $f_i|_{C_0} = 0$ for all $i \in \{1, \ldots, m\}$ such that
\[
y := f_1(x_{-m}) \cdots f_m(x_{-1})x_{0} \neq 0.
\]
Then $\delta(y) = f_1(x_{-m-1}) \cdots f_m(x_{-2})x_{-1} \otimes x_{0} \in C_0 \otimes U$ by the maximality of $m$. 
\end{proof}
\begin{proof}[Proof of Theorem \ref{C-thm3.7}]
    We endow a non-standard grading on $\bB(M\oplus N)$ via setting $\operatorname{deg}(M)=1$ and $\operatorname{deg}(N)=0$. Under this grading, $\bB(M\oplus N)$ remains  a $\mathbb{N}_0 $-graded Hopf algebra in $\GG$. Furthermore, $\bB(M\oplus N) \# \kG$ becomes a $\mathbb{N}_0$-graded coquasi-Hopf algebra with $\operatorname{deg}(\kG)=0$. It follows direct that $K$ inherits an  $\mathbb{N}_0$-graded Hopf algebra in $\BNG$ with $K(n)=K \cap (\bB(M\oplus N) \# \kG)(n)$.  In particular, we have $K(1)=K \cap (\bB(M\oplus N) \# \kG)(1)=K \cap (\bB(M\oplus N)(1))=\operatorname{ad}(\BN)(M)=L $. Hence $K$ is generated by its degree one component $K(1)$.\par 
    It remains to  show  that $P(K)=K(1)$. That is, there is no primitive element in $K(n)$, $n\geq 2$. Suppose, for contradiction, that there exists a nonzero subspace $U \subseteq P(K(n))$ for some positive integer $n$. By Lemma \ref{C-lem3.9}, $U$ is an object in $\BNG$, since $\operatorname{deg}(N)=\operatorname{deg}(\kG)=0$. Now, consider the coalgebra filtration on $\BN\# \kG$ given by
    $(\BN\#\kG)_0=\kG$, $(\BN\#\kG)_1=\kG+N$. Applying Lemma \ref{C-lem3.11}, we find a nonzero element $u \in U$ such that $\delta_K(u)=u_{-1}\otimes u_0\in \kG\otimes U$. Note that 
    $$ \delta_K(u)=(\pi\otimes \operatorname{id})\Delta_{\AMN}(u).$$ Therefore
     $$\Delta_{\AMN}(u)=u \otimes 1+u_{-1}\otimes u_0.$$
    Applying the map $\mathrm{id} \# \varepsilon \otimes \mathrm{id}$, we obtain $$\Delta_{\bB(M\oplus N)}(u)=(\operatorname{id}\# \varepsilon \otimes \operatorname{id})\Delta_{\AMN}(u)=1\otimes u+u\otimes 1,$$
      which implies that $u$ is a primitive element in $\mathcal{B}(M \oplus N)$. However, since $K(n)$ is generated by $L = \operatorname{ad}(\mathcal{B}(N))(M)$, the element $u$ must have degree at least $n$ in the standard grading of $\mathcal{B}(M \oplus N)$. This leads to a contradiction, as primitive elements in a Nichols algebra lie in degree one.
\end{proof}
\subsection{From the Nichols algebra back to the space of coinvariants}
We now turn to a converse of the preceding result under an additional restriction.\par 
Let $C$ be a coalgebra and $D \subseteq C$ be a subcoalgebra. Let $V$ be a left comodule of $C$ with  comodule structure $\delta: V \rightarrow C\otimes V$.  We denote the largest $D$-subcomodule of $V$ by 
$$ V(D)=\{ v\in V\mid \delta(v)\in D \otimes V\}. 
$$

\begin{lemma}\label{C-lem 3.10}
Let $N \in \GG$ and $W \in \BNG$. Assume that $W=\bigoplus_{i \in I}W_i$ is a decomposition into irreducible objects in $\BNG(\Zgr)$. Let $M=W(\kG)$, and $M_i=M \cap W_i$ for all $i \in I$. \par
\text{$(1)$} $M=\bigoplus_{i\in I}M_i$ is a decomposition into irreducible
objects in $\GG(\Zgr)$.\par 
\text{$(2)$} For all $i \in I$, $M_i$ is the $\mathbb{Z}$-homogeneous component of $W_i$ of minimal degree, and $W_i=\BN\rhd M_i=\bigoplus_{n\geq 0}N^{\otimes n}\rhd M_i$.

\end{lemma}

\begin{proof}
    Let $W=\bigoplus_{n\in \mathbb{Z}}W(n)$ be the $\mathbb{Z}$-grading of $W$ in $\BNG$. Then $M$ is a $\mathbb{Z}$-graded object in $\GG$ with homogeneous components $M(n)=M \cap W(n)$ for all $n \in \mathbb{Z}$. It is clear that $M=\bigoplus_{n\in \mathbb{Z}}\bigoplus_{i \in I}M_i(n)=\bigoplus_{i \in I}M_i$, where $M_i=M \cap W_i=W_i(\kG)$.\par
    We now show that each  $M_i$ is an  irreducible object in $\GG(\Zgr)$ for each $i \in I$. First, by Lemma \ref{C-lem3.11}, 
  we have $M_i \neq 0$. Suppose $0 \neq M_i' \subseteq M_i$ is a homogeneous subobject in $\GG$, and let $m_i$ be the degree of $M_i'$. Define
 $$W_i':=\BN \rhd M_i'.$$
Using a method similar to that in Lemma \ref{C-lem3.2}, one verifies that $W_i'$ is a $\mathbb{Z}$-graded $\mathcal{A}(N)$-subcomodule. Moreover,
  $\AN \rhd (\BN \rhd M_i') \subseteq \AN \rhd M_i'=\BN \rhd M_i'$. Therefore $W_i'$ is a $\mathbb{Z}$-graded subobject of $W_i$ in $\BNG$.
The object $W_i'$ is concentrated in degrees $\geq m_i$, and its degree $m_i$'s component is exactly $M_i'$. Since $W_i$ is irreducible, we must have $W_i' = W_i$. Therefore,
$M_i'=W_i'\cap M=W_i \cap M=M_i$, which establishes the irreducibility of $ M_i$. Moreover, $M_i$ is indeed the homogeneous component of $W_i$ of minimal degree. Finally, for each $i \in I$ and $n \in \mathbb{N}_0$, 
    $$ \operatorname{deg}(N^{\otimes n} \rhd M_i)=n+\operatorname{deg}(M_i),$$ since the  map $\BN \# \kG \otimes W_i \rightarrow W_i$ is $\mathbb{Z}$-graded. It follows that $\BN \rhd M_i=\bigoplus\limits_{n \geq 0}N^{\otimes n} \rhd M_i$ as required.
\end{proof}
\begin{thm}\label{C-prop3.11}
    Let $N \in \GG$ and  $W$ be a semisimple object in the category $\BNG(\Zgr)$. Here $\AN$ is equipped with the standard $\mathbb{N}_0$-grading. Let $K=\mathcal{B}(W)$ be the Nichols algebra of $W$ in $\BNG$ and define $M=W(\kG)$. Then there is a unique isomorphism 
    \begin{equation}
        K \# \BN\cong \mathcal{B}(M\oplus N)
    \end{equation} of braided Hopf algebras in $\GG$ which is the identity on $M \oplus N$.
    \end{thm}
\begin{proof}
  Let $W=\bigoplus_{i \in I} W_i$ be the decomposition of $W$ into irreducible objects in the category \\$\BNG(\Zgr)$. For each $i \in I$, define $M_i=M\cap W_i$. By Lemma \ref{C-lem 3.10}, $W_i=\bigoplus_{n \in I}N^{\otimes n}\rhd M_i$. Let $\operatorname{deg}(M_i)=m_i$ for each $i \in I$. We endow $W_i$ a new grading by $\widetilde{W_i}=W_i$ with  
    $$ \widetilde{W_i}(n)=W(n+m_i-1)=N^{n-1}\rhd M_i,\ \text{for all} \ n \in \mathbb{N}_0.$$
 With the new grading, $\widetilde{W}=\bigoplus_{i \in I}\widetilde{W_i}$ remains an object in $\BNG$ since degree shifting preserves graded modules and graded comodules. Moreover, $\widetilde{W}(n)=0$ for all $n \leq 0$.  
So the Nichols algebra $\bB(\widetilde{W})$  is an $\mathbb{N}_0$-graded Hopf algebra in $\BNG$.  Note that $\bB(\widetilde{W})\cong \bB(W)$, as both are determined by the same module and comodule maps.
Under this grading, $\widetilde{W}(0)=0$ and $\widetilde{W}(1)=M$, the degree zero and degree one components of the Nichols algebra $\bB(\widetilde{W})$ are   $$\bB(\widetilde{W})(0)=\mathbbm{k}, \ \ \bB(\widetilde{W})(1)=M.$$ Moreover, $K\#(\BN \# \kG)$ is a $\mathbb{N}_0$-graded coquasi-Hopf algebra, and 
$$ R:=K\#\BN=(K \#(\BN\# \kG))^{\operatorname{co}\kG}$$
is a $\mathbb{N}_0$-graded Hopf algebra in $\GG$. Its degree $0$ component is $\mathbbm{k}$, and its degree $1$ component is 
$M\oplus N$. \par 
We now show that 
$R$ is generated by  $M \oplus N$ as an algebra in $\GG$. It is direct to see that $R$ is generated by $K(1)=\BN \rhd M$ and $N$. It therefore suffices to prove that $\BN \rhd M=\bigoplus_{n \geq 0}N^{\otimes n} \rhd M$ is contained in the subalgebra generated by $\BN$ and $M$. To see this, we proceed by induction on 
 $n$. For the base case, take homogeneous elements $X \in M$ and $ Y \in N$. Inside the coquasi-Hopf algebra $K\#(\BN \# \kG)$, we have
$$ YX=(Y_1\rhd X)Y_2=(y \rhd X)Y+(Y \rhd X).$$
Since $M \in \GG$ and $y \in G$, the term $Y \rhd X$ is contained in the subalgebra generated by $\BN$ and $M$. Now assume for some $k\geq 0$, that for all  homogeneous $Z \in \BN(k)$ and $X \in M$, the element $Z \rhd X$
lies in the subalgebra generated by $\BN$ and $M$, then for homogeneous $Y \in N$,
\begin{align*}
    (YZ)\rhd X&=\frac{\Phi(y,(Z_2\rhd X)_{-1},Z_3))}{\Phi(y,Z_1,x)}Y\rhd(Z_2\rhd X)_0\\
    &=\frac{\Phi(y,(Z_2\rhd X)_{-1},Z_3))}{\Phi(y,Z_1,x)}(Y(Z_2\rhd X)_0-(y\rhd(Z_2\rhd X)_0 ))\\
    &=\frac{\Phi(y,\operatorname{deg}(Z_2\rhd X),Z_3))}{\Phi(y,Z_1,x)}(Y(Z_2\rhd X)-y\rhd(Z_2\rhd X)).
\end{align*} 
By the induction hypothesis, the first term lies in the subalgebra generated by
 $\BN$ and $M$. Since $y$ stabilizes both $\BN$  and $M$, the second term also lies in that subalgebra. This completes the induction. Therefore, $R$ is generated by  $M \oplus N$, and hence a pre-Nichols algebra of $M\oplus N$.\par 
By the universal property of $\bB(M\oplus N)$, there exists a surjective morphism of $\mathbb{N}_0$-graded Hopf algebras in $\GG$:
$$ \rho : R \rightarrow \bB(M \oplus N), \rho\mid_{M\oplus N}=\operatorname{id}.$$
This induces a surjective coquasi-Hopf algebra map:
$$ \rho \# \operatorname{id}:R \# \kG \rightarrow \bB(M \oplus N)\# \kG.$$
Let $K'=(\bB(M\oplus N)\# \kG)^{\operatorname{co}\BN\#\kG}$. Then we obtain two bijective coquasi-Hopf algebra maps:
$$ R\# \kG \rightarrow K \# (\BN \# \kG), \ \ K' \# (\BN \# \kG)\rightarrow \bB(M\oplus N)\# \kG.$$
Then the map $\rho \# \operatorname{id}$ thus induces a surjective map of coquasi-Hopf algebras
$$ \rho': K\#(\BN \#\kG)\rightarrow K'\#(\BN \# \kG), \ \ \rho'\mid_{(M \oplus N)}=\operatorname{id}.$$
Moreover,  the following diagram commutes:
 \begin{center}
	\begin{tikzpicture}
		\node (A) at (0,0) {$K \# (\BN \# \kG)$};
		\node (B) at (6,0) {$K'\#(\BN\#\kG)$};
		\node (C) at (0,-3) {$\BN \# \kG$};
  	\node (D) at (6,-3) {$\BN \# \kG$};
		\draw[->] (A) --node [above] {$\rho'$} (B);
		\draw[->] (A) --node [  right] {$\varepsilon_K \# \operatorname{id}_{\BN \# \kG}$} (C);	
		\draw[->] (B) --node [right] {$\varepsilon_{K'} \# \operatorname{id}_{\BN \# \kG} $} (D)	;
		\draw[->] (C) --node [above] {$\operatorname{id} $} (D)	;
	\end{tikzpicture}
\end{center}
since ${\rho\mid}_{M\oplus N}=\operatorname{id}$. As $\BN \# \kG$ acts on $K'$ via adjoint operation.  Theorem $\ref{C-thm3.7}$ implies that  $K'\cong \bB(\operatorname{ad}(\BN)(M))$.
Therefore $\rho'$ induces a surjective map 
$$\phi: K\rightarrow \bB(\operatorname{ad}(\BN)(M))$$
in $\BNG$ between the right coinvariant subspaces of $\varepsilon_K \# \operatorname{id}_{\BN \# \kG}$ and $\varepsilon_{K'} \# \operatorname{id}_{\BN \# \kG} $, satisfying ${\phi\mid_M}=\operatorname{id}$.
Furthermore, there is a surjective map in $\BNG$:
$$ \phi_1: \BN \rhd M \rightarrow \operatorname{ad}(\BN)(M), \ \ \phi_1\mid_M=\operatorname{id}.$$
Since  $M=\bigoplus_{i\in I}M_i$ is a decomposition into irreducible
objects in $\GG$, each $\operatorname{ad}(\BN)(M_i)$ is irreducible in $\BNG$. On the other hand, each $\BN \rhd M_i$ is irreducible as well. Thus $\phi_1$ is an isomorphism in $\BNG$, and it follows that  $\phi$ is an isomorphism of Hopf algebra in $\BNG$. Consequently,
$\rho \# \operatorname{id}_{\mathbbm{k}G}=\phi\#\operatorname{id}_{\AN}$ is an isomorphism of coquasi-Hopf algebras. 
Therefore,
$$ \rho=(\rho \#\operatorname{id}_{\mathbbm{k}G})^{\operatorname{co}\mathbbm{k}G} : K\# \BN \rightarrow \bB(M\oplus N)$$ is an isomorphism of Hopf algebras in $\GG$.

\end{proof}
\section{Reflection of Nichols algebras}
In this section, we develop  reflections of Nichols algebras over pointed cosemisimple coquasi-Hopf algebras, which will serve as the cornerstone for our subsequent construction of semi-Cartan graphs. The reflection  allows us to systematically relate different Nichols algebra realizations through a well-defined transformation procedure.
\par 
\subsection{Definition of reflection and basic properties}
Let us begin by establishing the basic framework. We still assume $G$ is a finite group, $\Phi$ is a $3$-cocycle on $G$.  Fix a positive integer $\theta$ and  denote the index set $\mathbb{I}=\{ 1,2,...,\theta\}$. 
\begin{definition}\label{C-def5.1}
    Let $\mathcal{F}_{\theta}$ denote the class of all $\theta$-tuples $M = (M_1, \ldots, M_{\theta})$, where $M_1, \ldots, M_{\theta} \in \GG$ are finite-dimensional  Yetter-Drinfeld modules. If $M \in \mathcal{F}_{\theta}$, we define

\[
\mathcal{B}(M): = \mathcal{B}(M_1 \oplus \cdots \oplus M_{\theta}).
\]

For tuples $M, M' \in \mathcal{F}_{\theta}$, we write $M \cong M'$, if $M_j \cong M_j'$ in $\GG$ for each $j$.
The isomorphism class of $M \in \mathcal{F}_{\theta}$ is denote by $[M]$.
\par 
For $1 \leq i \leq \theta$ and $M \in \mathcal{F}_{\theta}$, we say the tuple $M$ admits the $i$-th reflection $R_i(M)$  if for all $j \neq i$ there is a natural number $m_{ij}^M \geq 0$ such that $(\mathrm{ad}\, M_i)^{m_{ij}^M} (M_j)$ is a non-zero finite-dimensional subspace of $\mathcal{B}(M)$, and $(\mathrm{ad}\, M_i)^{m_{ij}^M + 1} (M_j) = 0$. Assume  $M$ admits the $i$-th reflection. Then we set $R_i(M) = (V_1, \ldots, V_{\theta})$, where

\[
V_j = 
\begin{cases} 
M_i^*, & \text{if } j = i, \\
(\mathrm{ad}\, M_i)^{m_{ij}^M} (M_j), & \text{if } j \neq i.
\end{cases}
\]
\end{definition}
Having defined individual reflections, we now extend this notion to sequences of reflections, which will be important for our study of repeated reflections of  tuples.
\begin{definition}\label{C-def5.2}
    Suppose $M \in \mathcal{F}_{\theta}$, such that $M_i$ is irreducible for each $i$. Let $l \in \mathbb{N}_0$ and $i_1,i_2,...,i_l\in \mathbb{I}$, \par 
   \text{$(1)$} We say $M$ admits the reflection sequence $(i_1,i_2,...,i_l)$ if $l=0$ or $M$ amdits the $i_1$-th reflection and  $R_{i_1}(M)$ satisfies the reflection sequence  $(i_2,i_3,...,i_l)$.\par
    \text{$(2)$} We say $M$ admits all reflection sequence if $M$ admits all reflection sequence $(i_1,i_2,...,i_l)$ for all $l \in \mathbb{N}_0$ and $i_1,i_2,...,i_l \in \mathbb{I}$.
\end{definition}
For $M \in \mathcal{F}_{\theta}$ admitting all reflections, we denote 
$$\mathcal{F}_{\theta}(M)=\{R_{i_1}(...(R_{i_l}(M))...)\mid l\in \mathbb{N}_0,\ i_1,..,i_l \in \mathbb{I}\}.    $$
\par A natural question arises: how do the irreducibility properties behave under reflections? The following result provides a reassuring answer.
\begin{lemma}\label{C-lem3.13}
    Suppose $M \in \mathcal{F}_{\theta}$ admits the $i$-th reflection for some $i \in \mathbb{I}$, and $M_j$ is irreducible for each $j \in \mathbb{I}$. Then each $R_i(M)_j$ is irreducible in $\GG$ for  $1 \leq j \leq \theta$.
\end{lemma}
\begin{proof}
    By definition,  $R_i(M)$ is defined, thus ${R_i(M)}_i \cong M_i^*$ is irreducible since $M_i$ is. For $j \neq i$, we observe that
    $\operatorname{ad}\bB(M_i)(M_j)=\bigoplus_{n=0}^{m_{ij}}\operatorname{ad}(M_i)^n(M_j)$. Consider the embedding
    $\mathbbm{k}G \rightarrow \mathcal{A}(M_i) $ and the projection $\mathcal{A}(M_i) \rightarrow \kG$. The object 
    $\operatorname{ad}(M_i)^{m_{ij}}(M_j)$ belongs to $\GG$. Now suppose $0 \neq L \subset \operatorname{ad}(M_i)^{m_{ij}}(M_j)$ is a Yetter-Drinfeld submodule over $\GG$. 
    Let $\langle L \rangle$ be the $\mathcal{A}(M_i)$-subcomodule of $\operatorname{ad}\bB(M_i)(M_j)$ generated by $L$,  defined explicitly as
    $$ \langle L \rangle:=\{\langle f,X_{-1}\rangle X_0\mid X \in L, f \in \operatorname{Hom}(\mathcal{A}(M_i),\mathbbm{k})\}.$$
    We want to prove 
    $ \langle L\rangle$ is a Yetter-Drinfeld submodule of  $\operatorname{ad}\bB(M_i)(M_j)$ in $\AMI$. To see this, take any homogeneous $A \in \mathcal{A}(M_i)$. Then  for all homogeneous $X \in L$, by Lemma \ref{C-lem2.6},
\begin{align*}
    \operatorname{ad}(A)&(\langle f,X_{-1}\rangle X_0)=\langle f,X_{-1}\rangle \operatorname{ad}(A)(X_0)\\
    &=\langle f,s(\mathcal{S}(A_1),(\operatorname{ad}(A_4)(X_0))_{-1}A_6 )t(A_3,X_{-1})\mathcal{S}(A_2)((\operatorname{ad}(A_4)(X_0))_{-2}A_5)\rangle (\operatorname{ad}(A_4)(X_0))_0.
\end{align*}
Note that $A$ and $X$ are homogeneous, thus if $(\operatorname{ad}(A_4)(X_0))_{-1} \notin G$, then this term equals zero. 
Therefore \begin{align*}
   & \operatorname{ad}(A)(\langle f,X_{-1}\rangle X_0)\\&=\langle f,s(\mathcal{S}(A_1),\operatorname{deg}(\operatorname{ad}(A_4)(X_0))A_6 )t(A_3,X_{-1})\mathcal{S}(A_2)((\operatorname{ad}(A_4)(X_0))_{-1}A_5)\rangle (\operatorname{ad}(A_4)(X_0))_0.
\end{align*}
Hence $\operatorname{ad}(A)(\langle f,X_{-1}\rangle X_0) \in \langle L\rangle.$
This ensures the first axiom of a Yetter-Drinfeld module, and the third axiom holds automatically.
  Therefore $ \langle P\rangle$ is a Yetter-Drinfeld submodule of  $\operatorname{ad}\bB(M_i)(M_j)$ in $\AMI$. \par 
    Since $\operatorname{ad}(\bB(M_i))(M_j)$  is irreducible in 
$\AMI$ by Lemma \ref{C-irrlem} (2), and $P \neq 0$, we have $\langle P \rangle= \operatorname{ad}(\bB(M_i))(M_j)$. It is direct to see that $ \langle P \rangle= \bigoplus_{i=0}^{m_{ij}} \langle P \rangle\cap \bB^{i}(M_i \oplus M_j)$. Then $$P=\langle P \rangle\cap \bB^{m_{ij}}(M_i \oplus M_j)=\operatorname{ad}(\bB(M_i))(M_j)\cap \bB^{m_{ij}}(M_i \oplus M_j)=\operatorname{ad}(M_i)^{m_{ij}}(M_j).$$ This establishes the irreducibility.
\end{proof}
\begin{lemma}\label{C-lem5.4}
Suppose $M \in \mathcal{F}_{\theta}$ and $M$ admits the  $i$-th reflection for each $i \in \mathbb{I}$.  We define $a_{ii}^M = 2$ for all $1\leq i \leq \theta$ and define $a_{ij}^M = -m_{ij}^M$. Then $(a_{ij}^M)_{i,j\in \mathbb{I}}$  is a generalized Cartan matrix.
\end{lemma}
\begin{proof}
    To establish this result, we need only verify: if $1\leq i<j \leq \theta $ such that $a_{ij}^M=0$, then $a_{ji}^M=0$.\par 
    Let $X \in M_i, Y \in M_j$ be homogeneous. A direct computation yields:
   \begin{align*}
       \Delta_{\mathcal{B}(M)}(\operatorname{ad}(X)(Y))&=\Delta_{\mathcal{B}(M)}(XY-(\operatorname{ad}(x)(Y))X)\\
       &=(1\otimes X+ X\otimes 1)(1 \otimes Y+ Y \otimes 1)-( \operatorname{ad}(x)(Y)\otimes 1+1 \otimes \operatorname{ad}(x)(Y))(1\otimes X+X \otimes 1)\\
       &=XY\otimes 1+X\otimes Y+1\otimes XY+\operatorname{ad}(x)(Y)\otimes X\\&-\operatorname{ad}(x)(Y)\otimes X-\operatorname{ad}(x)(Y)X \otimes 1-1\otimes \operatorname{ad}(x)(Y)X-\operatorname{ad}(xyx^{-1})(X)\otimes \operatorname{ad}(x)(Y) \\
      & =\operatorname{ad}(X)(Y) \otimes 1+ 1\otimes \operatorname{ad}(X)(Y)+X\otimes Y-c^2(X\otimes Y).
   \end{align*} 
Note that   $a_{ij}^M=0$ implies $\operatorname{ad}(X)(Y)=0$, from which we deduce $X\otimes Y-c^2(X\otimes Y)=0$.
This establishes $(\operatorname{id}-c^2)(M_i \otimes M_j)=0$. Furthermore, we have $(\operatorname{id}-c^2)c(M_j \otimes M_i)=0$. Since the braiding 
$c$ is invertible, it follows that  $(\operatorname{id}-c^2)(M_j \otimes M_i)=0.$ Consequently, $\Delta_{\mathcal{B}(M)}(\operatorname{ad}(Y)(X))=0$ in $\bB(M_i \oplus M_j)$, for all $X \in M_i,  Y \in M_j$, which implies $\operatorname{ad}(Y)(X)=0$.  We conclude that  $a_{ji}^M=0$, as required.
\end{proof}
\begin{lemma}\label{C-lem3.15}
    Suppose $M \cong N$ in $\mathcal{F}_{\theta}$, if  $M$  admits the $i$-th reflection for some $i \in \mathbb{I}$, so does $N$. Furthermore, $R_i(M)\cong R_i(N)$ and $a_{ij}^M=a_{ij}^N $ for each $j \in \mathbb{I}$.
\end{lemma}
\begin{proof}
    Let $\phi: M \rightarrow N$ be an  isomorphism of tuples with each component 
 $\phi_j:M_j \rightarrow N_j$ an isomorphism of Yetter-Drinfeld modules.
We first demonstrate that 
$\phi_i$ inducing a unique morphism $\mathcal{B}(\phi_i):\bB(M_i)\rightarrow \bB(N_i)$. \par The tensor algebra map $T(\phi_i): T(M_i) \rightarrow T(N_i) $ is a morphism of bialgebras in $\GG$. Since $T(\phi_i)$ is a $\mathbb{N}_0$-graded coalgebra map, it sends the coideal $I(M_i)=\bigoplus_{n\geq 2}\operatorname{ker}(\delta_{1^n})$ to $I(N_i)$. Hence $\phi_i$ descends to a well-defined morphism $\mathcal{B}(\phi_i):\mathcal{B}(M_i)\rightarrow \bB(N_i)$. 
This morphism is $\mathbb{N}_0$-graded, as its restriction to each homogeneous component $\mathcal{B}(V)(n)$ is induced by $\phi_i^{\otimes n}$. Explicitly, for any $n \in \mathbb{N}_0$ and $\gamma, \gamma_1, \ldots, \gamma_n \in \mathbb{N}_0$ with $\gamma = \gamma_1 + \cdots + \gamma_n$, we have
\[
\mathcal{B}(\phi_i)(V(\gamma_1) \cdots V(\gamma_n)) = \phi_i(V(\gamma_1)) \cdots \phi_i(V(\gamma_n)) \subseteq \mathcal{B}(W)(\gamma).
\] The claim on the surjectivity of $\mathcal{B}(\phi_i)$ is immediate. While injectivity follows from the equation 
\[
\Delta_{1^n} \phi_i^{\otimes n} = \phi_i^{\otimes n} \Delta_{1^n}
\]
for all $n \in \mathbb{N}_0$. We thus obtain an equivalence of categories: $\AMI \cong \ANI$.
For each $j \neq i \in \mathbb{I}$,
$$ \operatorname{ad}(\bB(\phi_i))(\phi_j):\operatorname{ad}(\bB(M_i))(M_j)\cong \operatorname{ad}(\bB(N_i))(N_j)$$ is an isomorphism of $\mathbb{N}_0$-graded objects in $\AMI$. In particular, $\operatorname{ad}(M_i)^{m_{ij}^M}(M_j)\cong \operatorname{ad}(N_i)^{m_{ij}^M}(N_j)\in \GG$ and $\operatorname{ad}(N_i)^{m_{ij}^M+1}(N_j)=0$. This establishes both $a_{ij}^M=a_{ij}^N$ and $R_i(M) \cong R_i(N)$.
\end{proof}
\subsection{Main Theorem}
In this subsection, we always fix a tuple $M = (M_1, \ldots, M_{\theta})\in \mathcal{F}_{\theta}$, with each component $M_i$ being irreducible, $i\in \mathbb{I}$. Furthermore, we assume $\bB(M_i)$ is finite-dimensional for each $i \in \mathbb{I}$.
 By Corollary \ref{C-cor3.8}, we have such a braided tensor equivalence for each $i\in \mathbb{I}$.
\begin{equation}
  \Omega_i:  \AMI \cong \AMID.
\end{equation}

The main result of this paper  proves that  the reflection of Nichols algebras in $\GG$ gives rise to a semi-Cartan graph. The first lemma provides some relations between reflection of $M$ and the braided tensor equivalence $\Omega_i$. 
\begin{lemma}
    With above assumptions on $M$, and suppose $M$ admits the $i$-th reflection with $R_i(M)=(V_1,...,V_{\theta})$. Then
    \begin{align*}
        \operatorname{ad}(\bB(M_i)(M_j))=\bigoplus_{n=0}^{m_{ij}}\operatorname{ad}(M_i)^n(M_j)&, \ \ \ \ V_j= \mathcal{F}_0(\operatorname{ad}\bB(M_i)(M_j)), \\
       \Omega_i(\operatorname{ad}(\bB(M_i)(M_j)))=\bigoplus_{n=0}^{m_{ij}}\operatorname{ad}(M_i^*)^n(V_j)&, \ \ \ \ M_j \cong \operatorname{ad}(M_i^*)^{m_{ij}}(V_j).
    \end{align*}
\end{lemma}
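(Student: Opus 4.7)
The plan is to establish the four equalities in sequence. Set $L := \operatorname{ad}(\bB(M_i))(M_j)$ and equip it with the $\mathbb{N}_0$-grading $L(n) := \operatorname{ad}(M_i)^n(M_j)$ from Lemma \ref{lem3.2}. The main tools will be this grading, the braided monoidal equivalence $\Omega_i$ from Theorem \ref{cor3.8}, and its interaction with $\mathbb{Z}$-gradings recorded in Lemma \ref{lemOmega}.

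The first identity is immediate from Lemma \ref{lem3.2} once the hypothesis that $M$ admits the $i$-th reflection is used to truncate the sum at $n = m_{ij}$. For the second identity, the inclusion $V_j \subseteq \mathcal{F}_0(L)$ is immediate: $V_j = L(m_{ij})$ is annihilated by $\operatorname{ad}(M_i)$ by definition of $m_{ij}$, and since $\bB(M_i)$ is algebra-generated by $M_i$, the Yetter-Drinfeld module axiom of Definition \ref{def2.5} (associativity of the action up to nonzero associator factors) extends this to all of $\bB(M_i)^+$. For the reverse inclusion, I will note that $\mathcal{F}_0(L)$ is $\mathbb{N}_0$-graded because the action is graded, so it decomposes along the $L(n)$; showing $\mathcal{F}_0(L)\cap L(n) = 0$ for $n < m_{ij}$ then combines the irreducibility of $L$ in $\AMI$ from Lemma \ref{irrlem}(2) with the Nichols-algebra identification $K \cong \bB(L)$ of Theorem \ref{thm3.7}.

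For the third and fourth identities, I would apply $\Omega_i$ to the decomposition in (1). By Lemma \ref{lemOmega}(3), $\Omega_i(L)$ is $\mathbb{Z}$-graded with $\Omega_i(L)(-n) = L(n)$, so its grade-zero piece is $M_j$ and its lowest-degree piece is $V_j$, while Lemma \ref{lemOmega}(1)--(2) give $\mathcal{F}^0(\Omega_i(L)) = \mathcal{F}_0(L) = V_j$ and $\mathcal{F}_0(\Omega_i(L)) = \mathcal{F}^0(L)$. Applying Lemma \ref{lem3.2} anew, this time inside the dual category $\AMID$ to the irreducible object $V_j \in \GG$, yields the $\mathbb{N}_0$-graded decomposition $\bigoplus_{n\geq 0}\operatorname{ad}(M_i^*)^n(V_j)$. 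Matching graded profiles with $\Omega_i(L)$ (which has exactly $m_{ij}+1$ nonzero components) forces the truncation at $n = m_{ij}$, identifies $\operatorname{ad}(M_i^*)^{m_{ij}}(V_j)$ with the grade-zero piece $M_j$ of $\Omega_i(L)$, and delivers both (3) and (4).

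The main obstacle is the reverse inclusion in (2): for every nonzero $x \in L(n)$ with $n < m_{ij}$, one must exhibit some $y \in \bB(M_i)^+$ with $\operatorname{ad}(y)(x) \neq 0$. The nontrivial associator in $\GG$ rules out a direct translation of the Hopf-algebra weight argument of \cite{HSdualpair}, and the cleanest approach appears to be through the Nichols-algebra structure $K \cong \bB(L)$ combined with the irreducibility of $L$ in $\AMI$. A secondary difficulty, for (3), is verifying that iterated $\operatorname{ad}(M_i^*)$ applied to $V_j$ in $\AMID$ really generates the whole of $\Omega_i(L)$; this requires unwinding the explicit formulas for $\Omega_i$ from Theorem \ref{thm2.6} and Remark \ref{rmk2.7}(3) and matching them against the coquasi-Hopf adjoint action in $\AMID$.
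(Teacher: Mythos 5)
Your overall architecture coincides with the paper's: the first identity is read off from Lemma \ref{lem3.2}, the second from the irreducibility of $L=\operatorname{ad}\bB(M_i)(M_j)$ in $\AMI$ (Lemma \ref{irrlem}), and the last two by transporting the grading through $\Omega_i$ via Lemma \ref{lemOmega}. For (2) the paper's argument is exactly the one you sketch: a nonzero $X\in\mathcal{F}_0(L)\cap \operatorname{ad}(M_i)^l(M_j)$ with $l<m_{ij}$ generates a Yetter--Drinfeld submodule contained in $\bigoplus_{k\le l}\operatorname{ad}(M_i)^k(M_j)$, which misses $V_j\neq 0$; the detour through $K\cong\bB(L)$ (Theorem \ref{thm3.7}) is not needed there.

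The genuine gap is in step (3). The crux is that $V_j$ generates $\Omega_i(L)$ under the $\bB(M_i^*)$-action, and neither of your proposed devices closes it. ``Matching graded profiles'' cannot: knowing that $\Omega_i(L)$ has $m_{ij}+1$ nonzero homogeneous components says nothing about whether the submodule $\bB(M_i^*)\rhd V_j$ fills them. Re-running Lemma \ref{lem3.2} ``in the dual category'' does not apply either, since that lemma concerns the adjoint action \emph{inside a Nichols algebra} $\bB(M\oplus N)$, whereas what is needed here is a statement about the abstract module action of $\bB(M_i^*)$ on the Yetter--Drinfeld module $\Omega_i(L)$; the two pictures are only reconciled afterwards, in Theorem \ref{prop3.11}. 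Unwinding the explicit formulas for $\Omega_i$ is likewise not how the paper proceeds. The missing ingredient is Lemma \ref{lem 3.10}: for an irreducible $\mathbb{Z}$-graded object $W$ in $\BNG(\Zgr)$, the subspace $\bB(N)\rhd M'$ generated by a nonzero homogeneous subobject $M'$ of minimal degree is again a graded Yetter--Drinfeld submodule, hence equals $W$. Applied to $\Omega_i(L)$ --- irreducible, with minimal-degree component $\mathcal{F}^0(\Omega_i(L))=\mathcal{F}_0(L)=V_j$ by Lemma \ref{lemOmega} --- this yields $\Omega_i(L)=\bigoplus_{n\ge 0}(M_i^*)^{\otimes n}\rhd V_j$, and only then does degree bookkeeping force the truncation at $n=m_{ij}$ together with $\operatorname{ad}(M_i^*)^{m_{ij}}(V_j)=\mathcal{F}_0(\Omega_i(L))=\mathcal{F}^0(L)=M_j$.
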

\begin{proof}
    The first equality follows directly from the decomposition:  $$\operatorname{ad}\left(\bB(M_i)(M_j)\right)=\bigoplus_{n \geq 0} \operatorname{ad}(M_i)^n(M_j).$$
    \par For the second statement, the inclusion $V_j \subseteq \mathcal{F}_0(\operatorname{ad}\bB(M_i)(M_j))$  is immediate. Suppose, for contradiction, that the containment is proper. Then there exists a nonzero $X \in \operatorname{ad}(M_i)^{l}(M_j)$ for $0 \leq l \leq m_{ij}-1$ such that  $ X \in \mathcal{F}_0(\operatorname{ad}\bB(M_i)(M_j))$. The Yetter-Drinfeld submodule generated by $X$ is contained in $\bigoplus_{k=0}^{l}\operatorname{ad}(M_i)^{k}(M_j)$. However, since  $V_j \neq 0$, and  $\operatorname{ad}(\bB(M_i)(M_j))$ is irreducible in $\AMI$ by Lemma \ref{C-irrlem}(ii), this leads to a contradiction. Hence, $V_j=\mathcal{F}_0(\operatorname{ad}\bB(M_i)(M_j))$.\par  
   For the third statement, we consider the category $\AMI(\Zgr)$ by setting $\operatorname{deg}(M_k)=1$ for $1\leq k \leq n$. Then $\operatorname{ad}(M_i)^l(M_j)$ has degree $l+1$ for $1\leq l \leq m_{ij}$. By Lemma \ref{C-lemOmega}, $\Omega_i(\operatorname{ad}(M_i)^l(M_j))=\operatorname{ad}(M_i)^l(M_j)$ is  $\mathbb{Z}$-graded with $\operatorname{deg}(\Omega_i(\operatorname{ad}(M_i)^l(M_j)))=-1-l$. Moreover,
    $$ \mathcal{F}^0\Omega_i(\operatorname{ad}\bB(M_i)(M_j))=\mathcal{F}_0(\operatorname{ad}\bB(M_i)(M_j))=\operatorname{ad}(M_i)^{m_{ij}}(M_j)=V_j.$$
    Since $\Omega_i(\operatorname{ad}(\bB(M_i)(M_j)))$ is irreducible in $\AMID$,  with the minimal degree part $V_j$. By Lemma \ref{C-lem 3.10},
    $$\Omega_i\left(\operatorname{ad}\left(\bB(M_i)(M_j)\right)\right)=\operatorname{ad}\left(\bB(M_i^*)(V_j)\right)=\bigoplus_{n=0}^{m_{ij}}\operatorname{ad}(M_i^*)^n(V_j).$$ In particular, we have $$M_j=\mathcal{F}^0\operatorname{ad}\bB(M_i)(M_j)=\mathcal{F}_0\operatorname{ad}\bB(M_i^*)(V_j)=\operatorname{ad}(M_i^*)^{m_{ij}}(V_j).$$
\end{proof}

The next theorem gives a natural explanation of reflections of tuples of Yetter-Drinfeld modules. Although the proof strategy parallels that of the Hopf algebra case [\citealp{rootsys}, Theorem 13.4.9], we include the details here for the sake of completeness and the reader's convenience.
\begin{thm}\label{C-thm 3.15}
With the above assumptions on $M$. For each $i\in \mathbb{I}$, suppose $M$ admits the $i$-th reflection, then there is an isomorphism of Hopf algebras in $\GG$:
     \begin{equation}
         \Theta_i:\bB(R_i(M))\cong \Omega_i\left(\bB(M)^{\operatorname{co}\bB(M_i)}\right)\# \bB(M_i^*).
     \end{equation}
\end{thm}
\begin{proof}
    Let $N=\bigoplus_{j\neq i}M_j$, and define $Q=\operatorname{ad}\bB(M_i)(N)$. By Lemma \ref{C-irrlem}, we have the decomposition $Q=\bigoplus_{j \neq i}Q_j$, where each $Q_j=\operatorname{ad}\bB(M_i)(M_j)$ is irreducible in $\AMI$.  Applying  Theorem \ref{C-thm3.7} yields an isomorphism of Hopf algebras:
    $$ \bB(M)^{\operatorname{co}\bB(M_i)} \cong \bB(Q).$$
    Now observe that the functor $\Omega_i$ sends Nichols algebras to Nichols algebras by Lemma \ref{C-lem2.18}. Therefore,
    $$\Omega_i\left(\bB(M)^{\operatorname{co}\bB(M_i)}\right)\cong \Omega_i(\bB(Q))\cong \bB(\Omega_i(Q)).$$
    Hence, in the category $\GG$,
    $$ \Omega_i\left(\bB(M)^{\operatorname{co}\bB(M_i)}\right) \# \bB(M_i^*)\cong \bB(\Omega_i(Q))\#\bB(M_i^*).$$
    Since $Q$ is a direct sum of irreducible objects, it is semisimple. The equivalence $\Omega_i$ preserves semisimplicity, so $\Omega_i(Q)$ is also semisimple in $\AMID(\Zgr)$.   By Proposition \ref{C-prop3.11}, we obtain
    $$ \bB\left(\Omega_i\left(Q\right)\right)\#\bB(M_i^*)\cong \bB\left(\mathcal{F}^0(\Omega_i(Q))\oplus M_i^*\right).$$
   We have already shown that $\mathcal{F}^0(\Omega_i(Q))=\bigoplus_{j \neq i}V_j$.
    Thus 
    $$ \bB(\Omega_i(Q))\#\bB(M_i^*)\cong \bB\left(\bigoplus_{j \neq i}V_j \oplus M_i^*\right)\cong \bB(R_i(M)).$$
   Combining the above isomorphisms, we conclude that in $\GG$, there is an isomorphism of Hopf algebras:
   $$  \Theta_i:\bB(R_i(M))\cong \Omega_i\left(\bB(M)^{\operatorname{co}\bB(M_i)}\right)\# \bB(M_i^*).$$
\end{proof}
\begin{cor}\label{C-cor5.8}
    With above assumptions on $M$. For each $i \in \mathbb{I}$, suppose $M$ admits $i$-th reflection, then $R_i(M) $  admits $i$-th reflection. Furthermore, we have:
    \begin{equation}
        R_i^2(M) \cong M,
    \end{equation}
    and 
    \begin{equation}
        a_{ij}^M=a_{ij}^{R_i(M)}, 
    \end{equation}
    for all $1 \leq j \leq \theta$.
\end{cor}
\begin{proof}
    By Theorem \ref{C-thm 3.15}, $R_i(M)=(V_1,...,M_i^*,...,V_{\theta}).$ 
    Note that 
    $$ \operatorname{ad}\bB(M_i^*)(V_j)=\Omega_i\left(\operatorname{ad}\bB(M_i)(M_j)\right).$$
It follows that 
    $$ \operatorname{ad}\bB(M_i^*)^{1-a_{ij}^M}(V_j)=0,   $$  which implies that $R_i(M)$ admits $i$-th reflection and $ a_{ij}^M=a_{ij}^{R_i(M)}$ for all $j \neq i$.
    It is clear that the $i$-th position of $R_i^2(M)$ is isomorphic to $M_i$. For $j \neq i$, 
    $$ \mathcal{F}_0\left(\operatorname{ad}\bB(M_i^*)(V_j)\right)=\mathcal{F}_0\left(\Omega_i\left(\operatorname{ad}\left(\bB\left(M_i\right)(M_j)\right)\right)\right)=\mathcal{F}^0(\operatorname{ad}(\bB(M_i))(M_j))=M_j.$$
    Therefore $R_i^2(M)_j \cong M_j$ for all $j \neq i$, and hence 
    $R_i^2(M)\cong M$.
\end{proof}

\begin{thm}\label{C-thm5.9}
   Let  $M = (M_1, \ldots, M_{\theta}) \in \mathcal{F}_{\theta}$ such that each component $M_i$ is irreducible. Suppose $M$ admits all reflections. We define the set $$\mathcal{X}=\{ [P]\mid P \in \mathcal{F}_{\theta}(M) \},$$ and the map  $$r: \mathbb{I} \times \mathcal{X} \rightarrow \mathcal{X},\ i \times [X] \mapsto [R_i(X)].$$ Furthermore, we assume each $\mathcal{B}(P_i)$ is finite-dimensional for $P \in \mathcal{X}$ and $i \in \mathbb{I}$, then 
    $$ \mathcal{G}(M)=(\mathbb{I},\mathcal{X},r,(A^X)_{X\in \mathcal{X}}),$$
    where $A^{[X]}=(a_{ij}^X)_{i,j \in \mathbb{I}}$ for all $[X] \in \mathcal{X}$, is a semi-Cartan graph.\end{thm}
 \begin{proof}
     It follows by definition of semi-Cartan graph and Corollary \ref{C-cor5.8}.
 \end{proof}
\section{Applications}
Building upon the framework developed in the preceding sections, we now present some applications of the semi-Cartan graph theory to  Nichols algebras over certain coquasi-Hopf algebras.  We establish several criteria that connect the finite-dimensionality of a Nichols algebra to the finiteness and structure of its associated semi-Cartan graph and Weyl groupoid. Then we apply this theory to provide a new proof for the key result in [\citealp{huang2024classification}].
\subsection{ A criterion for finite-dimensional Nichols algebras}

We still work on the setting of $\GG$, where $G$ is a finite group and $\Phi$ is a $3$-cocycle on $G$.
Now fix a positive integer $\theta$ and  denote the index set $\mathbb{I}=\{ 1,2,...,\theta\}$. 
  Let  $M = (M_1, \ldots, M_{\theta}) \in \mathcal{F}_{\theta}$. We endow a $\mathbb{Z}^{\theta}$-grading on $\mathcal{B}(M)$ by setting
 $$ \operatorname{deg}(M_i)=\alpha_i, \ 1\leq i \leq \theta,$$ where $\alpha_i$, $1\leq i \leq \theta$ are the standard basis of $\mathbb{Z}^{\theta}$. For each $\beta \in \mathbb{Z}^{\theta}$, define the homogeneous component $\mathcal{B}(M)_{\beta}=\{x \in \bB(M)\mid \operatorname{deg}(x)=\beta \}$,
 and the support of $\bB(M) $ as 
 $$\operatorname{Supp}\bB(M):=\{ \beta \in \mathbb{Z}^{\theta}\mid \bB(M)_{\beta}\neq 0 \}.$$
 \begin{lemma} For each $i \in \mathbb{I}$, suppose $M$ admits $i$-th reflection,
     \begin{align}\label{C-4.1}
         \operatorname{Supp}\bB(M)\cup \operatorname{Supp}\bB(M^*)&=\operatorname{Supp}\bB(R_i(M))\cup \operatorname{Supp}\bB(R_i(M)^*),\\
         s_i^M(\operatorname{Supp}\bB(M)\cup \operatorname{Supp}\bB(M^*))&=\operatorname{Supp}\bB(R_i(M))\cup \operatorname{Supp}\bB(R_i(M)^*).\label{C-4.2}
     \end{align}
 \end{lemma}

\begin{proof}
    If we regard $ \mathcal{B}(M)$ as an $\mathbb{N}_0$-graded object in  $\GG$, by [\citealp{defofnichols}, Lemma 2.6] the graded dual of $\bB(M)$ is isomorphic to $\bB(M^*)$. We thus obtain an isomorphism of $\mathbb{N}_0$-graded objects in  $\GG$:
    $$\bB(M) \otimes \bB(M^*)\cong K_i \otimes \bB(M_i) \otimes K_i^{\operatorname{gr-dual}}  \otimes \bB(M_i^*).$$
    Similarly, for the reflected tuple, we have
    $$ \bB(R_i(M)) \otimes \bB(R_i(M)^*)\cong K_i \otimes \bB(M_i^*) \otimes  K_i^{\operatorname{gr-dual}} \otimes \bB(M_i).$$
    Since $A \otimes B \cong B \otimes A$ as $\mathbb{Z}^{\theta}$-graded vector spaces, the supports on both sides coincide, which establishes the first claim. \par 
   The second claim follows directly from the definition of $s_i^M$.
\end{proof}
Next, we show that reflections preserve the dimension of Nichols algebras.
\begin{lemma}\label{C-lem4.2}
    Suppose $\operatorname{dim}\bB(M)< \infty$, then $M$ gives rise to a semi-Cartan graph, and $\operatorname{dim}\bB(R_i(M))=\operatorname{dim}\bB(M)$ for each $i \in \mathbb{I}$.
\end{lemma}
\begin{proof}
    For each $i\in \mathbb{I}$, as vector spaces, we have the isomorphism $\bB(M)\cong K_i \otimes \bB(M_i)$. Therefore $K_i$ is finite-dimensional, and $\operatorname{ad}(\bB(M_i))(M_j)$ is finite-dimensional for each $j \in \mathbb{I}$. It follows by definition that $M$ admits the $i$-th reflection. Applying the  equivalence $\Omega_i$, we obtain $\Omega_i(K_i)=K_i \in \AMID$  and  $\bB(R_i(M))\cong K_i \otimes \bB(M_i^*)$ as vector space. Since $\bB(M_i)$ is finite-dimensional, we have $\operatorname{dim}\bB(M_i)=\operatorname{dim}\bB(M_i^*)$. Therefore $\operatorname{dim}\bB(R_i(M))=\operatorname{dim}\bB(M)$ for each $i \in \mathbb{I}$. By iterated reflections, $M$ admits all reflections and gives rise to a semi-Cartan graph.
\end{proof}
Before we state the next lemma, we recall the definition of a finite semi-Cartan graph.
\begin{definition}
   Let $\mathcal{G}=(\mathbb{I},\mathcal{X},r,(A^X)_{X\in \mathcal{X}})$ be  a semi-Cartan graph. For all $X \in \mathcal{X}$, the set
   $$ \Delta^{X,\operatorname{re}}=\{\omega(\alpha_i)\in \mathbb{Z}^{\mathbb{I}}\mid \omega \in \operatorname{Hom}(\mathcal{W}(\mathcal{G}),X), i \in \mathbb{I} \}$$
   is called the set of real roots of $\mathcal{G}$ at $X$. We call the semi-Cartan graph $\mathcal{G}$  finite, if $\Delta^{X,\operatorname{re}}$ is finite for all $X \in \mathcal{X}$.
\end{definition}
We now relate the finiteness of the Nichols algebra to the structure of the associated semi-Cartan graph.
\begin{lemma}\label{C-lem6.3}
Suppose $\bB(M)$ is finite-dimensional, then the semi-Cartan graph $\mathcal{G}(M)$ is finite.
\end{lemma}

\begin{proof}
Let  $W= M_1 \oplus \cdots \oplus M_\theta$. Clearly, the homogeneous degrees of elements in $W$ lie in the union $W \subseteq \operatorname{Supp}\bB(M)\cup \operatorname{Supp}\bB(M^*)$. For $1 \leq j \leq \theta $, we apply equations  (\ref{C-4.1}),(\ref{C-4.2}),  together with the fact that  $R_i^2(M)\cong M$ to obtain
    \begin{align*}
        s_i^{R_i(M)}(\alpha_j)&\subseteq s_i^{R_i(M)}(\operatorname{Supp}\bB(M)\cup \operatorname{Supp}\bB(M^*)) \\
        &=s_i^{R_i(M)}(\operatorname{Supp}\bB(R_i(M))\cup \operatorname{Supp}\bB(R_i(M^*)))\\
        &=\operatorname{Supp}\bB(M)\cup \operatorname{Supp}\bB(M^*).
    \end{align*}
   We recall that every element $\omega \in \operatorname{Hom}(\mathcal{W}(\mathcal{G}), M)$ can be expressed as an iteration of simple reflections. It follows that  $\Delta^{M \operatorname{re}} \subseteq \operatorname{Supp}\bB(M)\cup \operatorname{Supp}\bB(M^*)$.
   Now, since $\mathcal{B}(M)$ is finite-dimensional, the set $\operatorname{Supp} \mathcal{B}(M)$ is finite. Moreover, as vector spaces, $\mathcal{B}(M) \cong \mathcal{B}(M^*)^{\mathrm{gr\text{-}dual}}$, so $\operatorname{Supp} \mathcal{B}(M^*)$ is also finite. Combining these observations, we conclude that $\Delta^{M,\operatorname{re}}$ is finite. The same reasoning applies to any $N \in X$: we have 
    $$ \Delta^{N, \operatorname{re}} \subseteq \operatorname{Supp}\bB(N)\cup \operatorname{Supp}\bB(N^*).$$
 Then $\Delta^{N, \operatorname{re}}$ is  again finite by Lemma \ref{C-lem4.2}. Therefore, $\mathcal{G}(M)$ is a finite semi-Cartan graph.
    \end{proof}
    \begin{lemma}\textup{[\citealp{HS10}, Lemma 5.1]}\label{C-lem4.4}
     Suppose  $\mathcal{G}=(\mathbb{I},\mathcal{X},r,(A^X)_{X\in \mathcal{X}})$ is a semi-Cartan graph. Then $\mathcal{G}$ is finite semi-Cartan graph if and only if the Weyl groupoid $\mathcal{W}(\mathcal{G})$ is finite.
    \end{lemma}
\begin{definition}
    Suppose $\mathcal{G}=(\mathbb{I},\mathcal{X},r,(A^X)_{X\in \mathcal{X}})$ is a semi-Cartan graph. If $A^X=A^Y$ for all $X,Y \in \mathcal{X}$, then $\mathcal{G}$ is called a standard semi-Cartan graph.
\end{definition}
The proof of the next theorem is similar to the Hopf algebra situation; we include it here for completeness.
\begin{thm}\label{C-thm4.6}
 Let $M = (M_1, \ldots, M_{\theta}) \in \mathcal{F}_{\theta}$, where each $M_i$ is irreducible for all $i \in \mathbb{I}$. If $\operatorname{dim}\bB(M) < \infty$ and $\mathcal{G}(M)$ is standard, then $A^M$ must be a finite Cartan matrix.
\end{thm}
\begin{proof}
    Since $\operatorname{dim}\bB(M) < \infty$, Lemma \ref{C-lem6.3} implies that $\mathcal{G}(M)$  is a finite Semi-Cartan graph.  By Lemma \ref{C-lem4.4}, $\mathcal{W}(\mathcal{G}(M))$ is finite, in particular, $\operatorname{Hom}(\mathcal{W}(\mathcal{G}(M)))$ is finite.\par 
    Let $W(A^M)$ be the Weyl group of the Kac-Moody Lie algebra of the Cartan matrix $A^M$. Note that 
    $$ W(A^M)=\langle s_i^M \in \operatorname{Aut}(\mathbb{Z}^{\theta})\mid 1\leq i \leq \theta \rangle.$$
    Since $\mathcal{G}(M)$ is standard, we know that
    $$ \operatorname{Hom}(\mathcal{W}(\mathcal{G}(M)))\longrightarrow W(A^M),\ \ (Y,s,X)\mapsto s $$
    is well-defined and surjective. Therefore $W(A^M)$ is finite, which implies that $A^M$ is of finite type.
\end{proof}
\subsection{A class of infinite-dimensional Nichols algebras}
The purpose of this section is to give a new proof of Proposition 4.1 in \cite{huang2024classification} through applying our previous observations.\par 
Let $G=\mathbb{Z}_{m_1}\times \mathbb{Z}_{m_2}\times \mathbb{Z}_{m_3}=\langle g_1\rangle\times \langle g_2\rangle \times \langle g_3\rangle$ with $m_1 \mid m_2 $, $m_2\mid m_3$, and
\begin{align}
	& \Phi\left( g_1^{i_1}  g_2^{i_2}g_3^{i_3}, g_1^{j_1} g_2^{j_2} g_3^{j_3}, g_1^{k_1} g_2^{k_2}g_3^{k_3}\right)  = (-1)^{k_1 j_2 i_3}
\end{align}
be a $3$-cocycle on $G$, where $0 \leq i_l < m_1$, $0 \leq j_l < m_2$, $0\leq k_l < m_3$, $1\leq l \leq 3$.\par
Let $V_1,V_2,V_3 \in \GG $ be simple modules such that $\operatorname{deg}(V_1)=g_1$, $\operatorname{deg}(V_2)=g_2$, $\operatorname{deg}(V_3)=g_3$ and $\operatorname{dim}(V_1)=\operatorname{dim}(V_2)=\operatorname{dim}(V_3)=2$. Moreover, $V_1=\left\{X_1',X_2'\right\},V_2=\left\{Y_1',Y_2'\right\},V_3=\left\{Z_1',Z_2'\right\}$ can be assumed satisfying the following equations:

\begin{equation*}
	\left\{
	\begin{aligned}
		%\nonumber
		&g_1\rhd X_i'=-X_i', \ i=1,2, \\
		&g_2\rhd X_1'= X_1', g_2\rhd X_2'=-X_2',\\
		&g_3 \rhd X_1'= X_2', g_3 \rhd X_2'= X_1'.\\
	\end{aligned}
	\right.
\quad
	\left\{
	\begin{aligned}
		%\nonumber
		&g_2\rhd Y_i'=-Y_i', \ i=1,2, \\
		&g_3\rhd Y_1'= Y_1', g_3\rhd Y_2'=-Y_2',\\
		&g_1 \rhd Y_1'= Y_2', g_1 \rhd Y_2'= Y_1',\\
	\end{aligned}
	\right.
\quad
	\left\{
	\begin{aligned}
		%\nonumber
		&g_3\rhd Z_i'=-Z_i',\ i=1,2, \\
		&g_2\rhd Z_1'= Z_1', g_2\rhd Z_2'=- Z_2',\\
		&g_1 \rhd Z_1'=Z_2', g_1 \rhd Z_2'= Z_1'.\\
	\end{aligned}
	\right.
\end{equation*}

%\begin{lemma}
%%	(i)  The integer $m_1,m_2,m_3$ must be even.\par 
	%(ii) The number $a_{123}=\frac{(m_1,m_2,m_3)}{2}$, i.e. 	$\omega(e^{i_1}f^{i_2}g^{i_3},e^{j_1}f^{j_2}g^{j_3},e^{k_1}f^{k_2}g^{k_3})=(-1)^{k_1j_2i_3}$
%\end{lemma}
%\begin{proof}
%Note $m_1,m_2,m_3$ must be even. Since $X_i=e^{m_1} \rhd X_i=(-1)^{m_1}X_i$ implies $m_1$ is even. Similiarly $m_2, m_3$ are even.\par 

%\end{proof}
\begin{rmk}\label{C-rmk 4.7}\rm
(i) Note that $m_1,m_2,m_3$ must be even. Indeed, from the relation $X_i=e^{m_1} \rhd X_i=(-1)^{m_1}X_i$, it follows that  $m_1$ is even. Similarly $m_2, m_3$ are even. \par 
(ii) $\bB(V_1)$, $\bB(V_2)$, $\bB(V_3)$ are finite-dimensional Nichols algebras by [\citealp{huang2024classification}, Proposition 2.12]. Moreover, $\bB(V_1\oplus V_2)$, $\bB(V_1 \oplus V_3)$, $
\bB(V_2\oplus V_3)$ are finite-dimensional by [\citealp{nondiagonal}, Proposition 5.1].\par
(iii)  The requirement $\operatorname{dim}(V_1)=\operatorname{dim}(V_2)=\operatorname{dim}(V_3)=2$ is in fact necessary, otherwise the corresponding Nichols algebra must be infinite-dimensional.
\end{rmk}
\begin{thm}\label{C-thm 4.8}\textup{[\citealp{huang2024classification}, Proposition 4.1]}
    With above notations, $\mathcal{B}(V_1\oplus V_2 \oplus V_3)$ is infinite-dimensional.
\end{thm}
It should be emphasized that the above theorem  plays a key role in that paper and the original proof relies on heavy computations
The purpose of the following discussion is to give a new proof of this theorem by applying the reflection theory of Nichols algebras.  \par 
To simplify the proof, we proceed with the following reduction. Let $H=\mathbb{Z}_2 \times \mathbb{Z}_2 \times \mathbb{Z}_2=\langle h_1\rangle\times \langle h_2\rangle \times \langle h_3\rangle$, and 
$$\phi\left( h_1^{i_1}  h_2^{i_2}h_3^{i_3}, h_1^{j_1} h_2^{j_2} h_3^{j_3}, h_1^{k_1} h_2^{k_2}h_3^{k_3}\right)  = (-1)^{k_1 j_2 i_3},$$
be a $3$-cocycle on $H$, where $0\leq i_l,j_l,k_l\leq 1$, $1\leq l \leq 3$.
By [\citealp{huang2024classification}, Lemma 3.5], we may give a complete list of irreducible Yetter-Drinfeld modules.  In particular, restricting to those irreducible Yetter–Drinfeld modules that generate finite-dimensional Nichols algebras, the isomorphism classes are given by the set
\begin{equation}
    S=\{ [^hW]\mid \ ^hW \ \text{is irreducible},\  h \in \mathbb{Z}_2 \times \mathbb{Z}_2 \times \mathbb{Z}_2-\{1,h_1h_2h_3\}\}.
\end{equation}
There are exactly six isomorphism classes. Here $^hW $ represents the simple Yetter–Drinfeld module has comodule structure
$$^hW:=\lbrace w\in W\mid \delta_W(w)=h\otimes w\rbrace.$$ 
We now choose explicit representatives of $S$. For $1\leq i \leq 6$, let $W_i\in S $ be pairwise non-isomorphic simple modules such that $\operatorname{dim}(W_i)=2$, and  $\operatorname{deg}(W_1)=h_1$, $\operatorname{deg}(W_2)=h_2$, $\operatorname{deg}(W_3)=h_3$, $\operatorname{deg}(W_4)=h_1h_2$, $\operatorname{deg}(W_5)=h_1h_3$, $\operatorname{deg}(W_6)=h_2h_3$. Moreover, we may take $$\begin{aligned}
W_1 &= \{X_1, X_2\}, & W_2 &= \{Y_1, Y_2\}, & W_3 &= \{Z_1, Z_2\}, \\
W_4 &= \{R_1, R_2\}, & W_5 &= \{S_1, S_2\}, & W_6 &= \{T_1, T_2\},
\end{aligned}$$ satisfying the following relations:

\begin{equation*}
	\left\{
	\begin{aligned}
		%\nonumber
		&h_1\rhd X_i=-X_i, \ i=1,2, \\
		&h_2\rhd X_1= X_1, h_2\rhd X_2=-X_2,\\
		&h_3 \rhd X_1= X_2, h_3 \rhd X_2= X_1.\\
	\end{aligned}
	\right.
\quad
	\left\{
	\begin{aligned}
		%\nonumber
		&h_2\rhd Y_i=-Y_i, \ i=1,2, \\
		&h_3\rhd Y_1= Y_1, h_3\rhd Y_2=-Y_2,\\
		&h_1 \rhd Y_1= Y_2, h_1 \rhd Y_2= Y_1.\\
	\end{aligned}
	\right.
\quad
	\left\{
	\begin{aligned}
		%\nonumber
		&h_3\rhd Z_i=-Z_i,\ i=1,2, \\
		&h_2\rhd Z_1= Z_1, h_2\rhd Z_2=- Z_2,\\
		&h_1 \rhd Z_1=Z_2, h_1 \rhd Z_2= Z_1.\\
	\end{aligned}
	\right.
\end{equation*}
\begin{equation*}
	\left\{
	\begin{aligned}
		%\nonumber
		&(h_1h_2)\rhd R_i=-R_i, \ i=1,2, \\
		&h_1\rhd R_1= R_1, h_1\rhd R_2=-R_2,\\
		&h_3 \rhd R_1= R_2, h_3 \rhd R_2= R_1.\\
	\end{aligned}
	\right.
\quad
	\left\{
	\begin{aligned}
		%\nonumber
		&(h_1h_3)\rhd T_i=-T_i, \ i=1,2, \\
		&h_1\rhd T_1= T_1, h_1\rhd T_2=-T_2,\\
		&h_2 \rhd T_1= T_2, h_1 \rhd T_2= T_1.\\
	\end{aligned}
	\right.
\quad
	\left\{
	\begin{aligned}
		%\nonumber
		&(h_2h_3)\rhd S_i=-S_i,\ i=1,2, \\
		&h_2\rhd S_1= S_1, h_2\rhd S_2=-S_2,\\
		&h_1 \rhd S_1=S_2, h_1 \rhd S_2= S_1.\\
	\end{aligned}
	\right.
 \end{equation*}
 \begin{rmk}\rm
  The category $\HHp_{\operatorname{fd}}$ is rigid. It is straightforward to verify that for each $1\leq i\leq 6$,  $W_i^*$ does exist  with $W_i^*\cong W_i$. This implies $W_i^* \in S$ as well. Moreover, $\mathcal{B}(W_i)\cong \mathcal{B}(W_i^*)$ as Hopf algebras in $\HHp$.
 \end{rmk}
 The application of semi-Cartan graph theory is the following result.
\begin{thm}\label{C-thm 4.10}
    Let $W=(W_1,W_2,W_3)$ be the $3$-tuple, then $W$ admits all reflections and $\mathcal{G}(W)$ is a standard semi-Cartan graph.
\end{thm}
To establish this result, we first prove the following key lemma:
\begin{lemma}\label{C-lem6.10}
    For all $1\leq i\neq j \leq 6$, if $\operatorname{deg}(W_i)\cdot \operatorname{deg}(W_j) \neq 1,h_1h_2h_3$, then
    $\operatorname{ad}(W_i)^2(W_j)=0.$ Moreover, $\operatorname{ad}(W_i)(W_j) \in S$, and as Yetter-Drinfeld module in $\HHp$, we have
    \begin{equation}
        \operatorname{ad}(W_i)(W_j)\cong \operatorname{ad}(W_j)(W_i).
    \end{equation}
    We list the following isomorphism for further use
    \begin{equation}\label{C-6.9}
     \begin{aligned}
        & \operatorname{ad}(W_1)(W_2)\cong  W_4, \operatorname{ad}(W_1)(W_3)\cong  W_5,\operatorname{ad}(W_1)(W_4)\cong W_2, \\ &\operatorname{ad}(W_1)(W_5)\cong W_3,
        \operatorname{ad}(W_2)(W_3)\cong   W_6, \operatorname{ad}(W_2)(W_4)\cong  W_1, \\
        &\operatorname{ad}(W_2)(W_6)\cong W_3,
        \operatorname{ad}(W_3)(W_5)\cong   W_2, \operatorname{ad}(W_3)(W_6)\cong  W_2 \\
        &\operatorname{ad}(W_4)(W_5)\cong W_6,
        \operatorname{ad}(W_4)(W_6)\cong   W_5, \operatorname{ad}(W_5)(W_6)\cong  W_4. 
     \end{aligned}         
     \end{equation}
\end{lemma}
\begin{proof}
     This proof involves extensive but analogous computations. 
   
  \textbf{Step1}\par
     We begin by establishing that $\operatorname{ad}(W_1)^2(W_2)=0.$ To this end, we compute the coproducts:
	$$
	\begin{aligned}
		& \Delta\left(\operatorname{ad}(X_1)\left(Y_1\right)\right)=1 \otimes \operatorname{ad}({X_1})\left(Y_1\right)+\operatorname{ad}({X_1})\left(Y_1\right) \otimes 1+X_1 \otimes\left(Y_1+Y_2\right), \\
		& \Delta\left(\operatorname{ad}({X_1})\left(Y_2\right)\right)=1 \otimes \operatorname{ad}({X_1})\left(Y_2\right)+\operatorname{ad}({X_1})\left(Y_2\right) \otimes 1+X_1 \otimes\left(Y_1+Y_2\right), \\
		& \Delta\left(\operatorname{ad}({X_2})\left(Y_1\right)\right)=1 \otimes \operatorname{ad}({X_2})\left(Y_1\right)+\operatorname{ad}({X_2})\left(Y_1\right) \otimes 1+X_2 \otimes\left(Y_1-Y_2\right), \\
		& \Delta\left(\operatorname{ad}({X_2})\left(Y_2\right)\right)=1 \otimes \operatorname{ad}({X_2})\left(Y_2\right)+\operatorname{ad}({X_2})\left(Y_2\right) \otimes 1+X_2 \otimes\left(Y_2-Y_1\right) .
	\end{aligned}
	$$
	From these we deduce the relations: 
	\begin{equation}\label{C-4.5}
\begin{aligned}
	& \operatorname{ad}({X_1})\left(Y_1\right)-\operatorname{ad}({X_1})\left(Y_2\right)=0,\\
	& \operatorname{ad}({X_2})\left(Y_1\right)+\operatorname{ad}({X_2})\left(Y_2\right)=0.
\end{aligned}
	\end{equation}
Therefore $\operatorname{ad}(W_1)(W_2)=\operatorname{span}\{ \operatorname{ad}(X_1)(Y_1),\operatorname{ad}(X_2)(Y_1) \}$.
 Now since 	$X_1^2=0$, we have 
	$$	\operatorname{ad}(X_1)^2\left(Y_1\right)=X_1(X_1Y_1-Y_2X_1)-h_1 \rhd ((X_1Y_1-Y_2X_1))X_1=0 $$
	which implies $\operatorname{ad}(X_1)^2\left(Y_1\right)=\operatorname{ad}(X_1)^2\left(Y_2\right)=0$. 
	Meanwhile, $$\operatorname{ad}({X_2})\left(\operatorname{ad}({X_1})\left(Y_1\right)\right)= \operatorname{ad}({X_2})\left(\operatorname{ad}({X_1})\left(Y_2\right)\right)=0.$$ Since
	$$\begin{aligned}
	\Delta(\operatorname{ad}({X_2})\left(\operatorname{ad}({X_1})\left(Y_1\right)\right))&=(1\otimes X_2+X_2\otimes 1)(1 \otimes \operatorname{ad}({X_1})\left(Y_1\right)+\operatorname{ad}({X_1})\left(Y_1\right) \otimes 1+X_1 \otimes\left(Y_1+Y_2\right)\\&-(1 \otimes \operatorname{ad}({X_1})\left(Y_1\right)+\operatorname{ad}({X_1})\left(Y_1\right) \otimes 1+X_1 \otimes\left(Y_1+Y_2\right))(1\otimes X_2+X_2\otimes 1)\\
	&=1 \otimes\operatorname{ad}({X_2})\left(\operatorname{ad}({X_1})\left(Y_1\right)\right)+\operatorname{ad}({X_2})\left(\operatorname{ad}({X_1})\left(Y_1\right)\right) \otimes 1
	\end{aligned},$$
where we have used equation (\ref{C-4.5}). A similar argument shows $	\operatorname{ad}(X_2)^2\left(Y_1\right)=0$ and \\$\operatorname{ad}({X_1})\left(\operatorname{ad}({X_2})\left(Y_1\right)\right)=0$. We conclude that  $\operatorname{ad}(W_1)^2(W_2)=0$.
\par 
By Lemma \ref{C-lem3.13},  the object
$\operatorname{ad}(W_1)(W_2)\in \HHp$ is simple. Since $\operatorname{deg}(\operatorname{ad}(W_1)(W_2))=h_1h_2$, $\operatorname{ad}(W_1)(W_2)\cong W_4$. Furthermore, by proof of Lemma \ref{C-lem3.15}, we have an isomorphism of  $\mathbb{N}_0$-graded objects
$\operatorname{ad}\bB(W_1)(W_4)\cong \operatorname{ad}\bB(W_1)(\operatorname{ad}(W_1)(W_2))
$. Since $\operatorname{ad}(W_1)^2(W_2)=0$, we have $\operatorname{ad}(W_1)^2(W_4)=0$ since it has same degree as $\operatorname{ad}(W_1)^2(W_2)$ in the standard $\mathbb{N}_0$-grading.
Therefore, $\operatorname{ad}(W_1)(W_4)$ is irreducible and one readily verifies that $\operatorname{ad}(W_1)(W_4)\cong W_2$ in $\HHp$. \par 
Analogous computations yield:
$$ \operatorname{ad}(W_2)^2(W_1)=\operatorname{ad}(W_4)^2(W_1)=0.$$
These identities further imply:
$$ \operatorname{ad}(W_2)(W_1) \cong W_4, \ \operatorname{ad}(W_4)(W_1)\cong W_2,$$
and
\begin{align*}
    &\operatorname{ad}(W_2)^2(W_4)=\operatorname{ad}(W_4)^2(W_2)=0,\\
    &\operatorname{ad}(W_2)(W_4) \cong W_1, \ \operatorname{ad}(W_4)(W_2)\cong W_1.
\end{align*} \par

\textbf{Step2}\par
We now proceed to prove \textbf{$\operatorname{ad}(W_1)^2(W_3)=0$}:
    $$
	\begin{aligned}
		& \Delta\left(\operatorname{ad}({X_1})\left(Z_1\right)\right)=1 \otimes \operatorname{ad}({X_1})\left(Z_1\right)+\operatorname{ad}({X_1})\left(Z_1\right) \otimes 1+X_1 \otimes Z_1+X_2\otimes Z_2, \\
		& \Delta\left(\operatorname{ad}({X_1})\left(Z_2\right)\right)=1 \otimes \operatorname{ad}({X_1})\left(Z_2\right)+\operatorname{ad}({X_1})\left(Z_2\right) \otimes 1+X_1 \otimes Z_2+X_2 \otimes Z_1, \\
		& \Delta\left(\operatorname{ad}({X_2})\left(Z_1\right)\right)=1 \otimes \operatorname{ad}({X_2})\left(Z_1\right)+\operatorname{ad}({X_2})\left(Z_1\right) \otimes 1+X_2 \otimes Z_1 +X_1 \otimes Z_2, \\
		& \Delta\left(\operatorname{ad}({X_2})\left(Z_2\right)\right)=1 \otimes \operatorname{ad}({X_2})\left(Z_2\right)+\operatorname{ad}({X_2})\left(Z_2\right) \otimes 1+X_2 \otimes Z_2+X_1\otimes Z_1 .
	\end{aligned}
	$$
It is straightforward to observe that:
	\begin{equation}\label{C-6.10}
\begin{aligned}
	& \operatorname{ad}({X_1})\left(Z_1\right)-\operatorname{ad}({X_2})\left(Z_2\right)=0, \\
	& \operatorname{ad}({X_2})\left(Z_1\right)+\operatorname{ad}({X_1})\left(Z_2\right)=0.
\end{aligned}
	\end{equation}
 Hence $\operatorname{ad}(W_1)(W_3)=\operatorname{span}\{ \operatorname{ad}(X_1)(Z_1),\operatorname{ad}(X_2)(Z_1) \}$.
 By $X_1^2=X_2^2=0$,
	\begin{align*}
	    \operatorname{ad}(X_1)^2\left(Z_1\right)&=X_1(X_1Z_1-Z_2X_1)-h_1 \rhd (X_1Z_1-Z_2X_1)X_1=0,\\
     \operatorname{ad}(X_1)^2(Z_2)&=X_1(X_1Z_2-Z_1X_1)-h_1 \rhd (X_1Z_2-Z_1X_1)X_1=0,\\
       \operatorname{ad}(X_2)^2\left(Z_1\right)&=X_2(X_2Z_1-Z_2X_2)-h_1 \rhd (X_2Z_1-Z_2X_2)X_2=0,\\
     \operatorname{ad}(X_2)^2(Z_2)&=X_2(X_2Z_2-Z_1X_2)-h_1 \rhd (X_2Z_2-Z_1X_2)X_2=0.
	\end{align*} 
 This argument, combined with (\ref{C-6.10}), implies 
	$\operatorname{ad}(W_1)^2(W_3)=0$. 
 As in the first case, we can derive the following identities:
 \begin{align*}
     \operatorname{ad}(W_3)^2(W_1)=&\operatorname{ad}(W_1)^2(W_5)=\operatorname{ad}(W_3)^2(W_5)=\operatorname{ad}(W_5)^2(W_1)=\operatorname{ad}(W_5)^2(W_3)=0.\\
     &\operatorname{ad}(W_1)(W_3)\cong \operatorname{ad}(W_3)(W_1)\cong W_5,\\ &\operatorname{ad}(W_1)(W_5)\cong \operatorname{ad}(W_5)(W_1)\cong W_3, \\ &\operatorname{ad}(W_3)(W_5) \cong \operatorname{ad}(W_5)(W_3)\cong W_1.
 \end{align*}
\par 
\textbf{Step3}\par Now we turn to proving  $\operatorname{ad}(W_2)^2(W_3)=0$:
$$
	\begin{aligned}
		& \Delta\left(\operatorname{ad}({Y_1})\left(Z_1\right)\right)=1 \otimes \operatorname{ad}({Y_1})\left(Z_1\right)+\operatorname{ad}({Y_1})\left(Z_1\right) \otimes 1, \\
		& \Delta\left(\operatorname{ad}({Y_1})\left(Z_2\right)\right)=1 \otimes \operatorname{ad}({Y_1})\left(Z_2\right)+\operatorname{ad}({Y_1})\left(Z_2\right) \otimes 1+2Y_1 \otimes Z_2, \\
		& \Delta\left(\operatorname{ad}({Y_2})\left(Z_1\right)\right)=1 \otimes \operatorname{ad}({Y_2})\left(Z_1\right)+\operatorname{ad}({Y_2})\left(Z_1\right) \otimes 1+2Y_2 \otimes Z_1, \\
		& \Delta\left(\operatorname{ad}({Y_2})\left(Z_2\right)\right)=1 \otimes \operatorname{ad}({Y_2})\left(Z_2\right)+\operatorname{ad}({Y_2})\left(Z_2\right) \otimes 1.
	\end{aligned}
	$$
Then $\operatorname{ad}(Y_1)(Z_1)=\operatorname{ad}(Y_2)(Z_2)=0$ and 
$\operatorname{ad}(W_2)(W_3)=\{\operatorname{ad}(Y_1)(Z_2), \operatorname{ad}(Y_2)(Z_1)\}.$ By $Y_1^2=Y_2^2=0$, 
\begin{align*}
   \operatorname{ad}(Y_1)^2(Z_2)&=Y_1(Y_1Z_2+Z_2Y_1)-h_2 \rhd (Y_1Z_2+Z_2Y_1)Y_1=0,\\
       \operatorname{ad}(Y_2)^2\left(Z_1\right)&=Y_2(Y_2Z_1-Z_1Y_2)-h_2 \rhd (Y_2Z_1-Z_1Y_2)Y_2=0.
\end{align*}
On the other hand, since $\operatorname{ad}Y_2(Z_2)=0,$ and $Y_1Y_2+Y_2Y_1=0$. The following identity holds.
 $$\begin{aligned}
	\Delta(\operatorname{ad}({Y_2})\left(\operatorname{ad}({Y_1})\left(Z_2\right)\right))&=(1\otimes Y_2+Y_2\otimes 1)(1 \otimes \operatorname{ad}({Y_1})\left(Z_2\right)+\operatorname{ad}({Y_1})\left(Z_2\right) \otimes 1+2Y_1 \otimes Z_2)\\&-(1 \otimes \operatorname{ad}({Y_1})\left(Z_2\right)+\operatorname{ad}({Y_1})\left(Z_2\right) \otimes 1+2Y_1 \otimes Z_2)(1\otimes Y_2+Y_2\otimes 1)\\
	&=1 \otimes\operatorname{ad}({X_2})\left(\operatorname{ad}({X_1})\left(Y_1\right)\right)+\operatorname{ad}({X_2})\left(\operatorname{ad}({X_1})\left(Y_1\right)\right) \otimes 1.
	\end{aligned}$$
Similarly, $\operatorname{ad}(Y_1)(\operatorname{ad}(Y_2)(Z_1))=0$. Hence $\operatorname{ad}(W_2)^2(W_3)=0$.
 We list all identities which can be obtained in this case.
 \begin{align*}
     \operatorname{ad}(W_3)^2(W_2)=&\operatorname{ad}(W_2)^2(W_6)=\operatorname{ad}(W_6)^2(W_2)=\operatorname{ad}(W_3)^2(W_6)=\operatorname{ad}(W_6)^2(W_3)=0.\\
     &\operatorname{ad}(W_2)(W_3)\cong \operatorname{ad}(W_3)(W_2)\cong W_6, \\ &\operatorname{ad}(W_2)(W_6)\cong \operatorname{ad}(W_6)(W_2)\cong W_3, \\ &\operatorname{ad}(W_3)(W_6) \cong \operatorname{ad}(W_6)(W_3)\cong W_2.
 \end{align*}\par
 \textbf{Step4}\par
 The final case  to confirm is $\operatorname{ad}(W_4)^2(W_5)=0$:
$$
	\begin{aligned}
		& \Delta\left(\operatorname{ad}({R_1})\left(T_1\right)\right)=1 \otimes \operatorname{ad}({R_1})\left(T_1\right)+\operatorname{ad}({R_1})\left(T_1\right) \otimes 1+R_1\otimes T_1-R_2\otimes T_2, \\
		& \Delta\left(\operatorname{ad}({R_1})\left(T_2\right)\right)=1 \otimes \operatorname{ad}({R_1})\left(T_2\right)+\operatorname{ad}({R_1})\left(T_2\right) \otimes 1+R_1\otimes T_2+R_2\otimes T_1, \\
		& \Delta\left(\operatorname{ad}({R_2})\left(T_1\right)\right)=1 \otimes \operatorname{ad}({R_2})\left(T_1\right)+\operatorname{ad}({R_2})\left(T_1\right) \otimes 1+R_2\otimes T_1+R_1\otimes T_2, \\
		& \Delta\left(\operatorname{ad}({R_2})\left(T_2\right)\right)=1 \otimes \operatorname{ad}({R_2})\left(T_2\right)+\operatorname{ad}({R_2})\left(T_2\right) \otimes 1+R_2\otimes T_2-R_1\otimes T_1.
	\end{aligned}
	$$
 Then it is direct to see that
	\begin{equation}\label{C-6.95}
\begin{aligned}
	& \operatorname{ad}({R_1})\left(T_1\right)+\operatorname{ad}({T_2})\left(R_2\right)=0, \\
	& \operatorname{ad}({R_2})\left(T_1\right)-\operatorname{ad}({R_1})\left(T_2\right)=0.
\end{aligned}
	\end{equation}
 Hence $\operatorname{ad}(W_4)(W_5)=\operatorname{span}\{ \operatorname{ad}(R_1)(T_1),\operatorname{ad}(R_2)(T_1) \}$.
 By $R_1^2=R_2^2=0$,
	\begin{align*}
	    \operatorname{ad}(R_1)^2\left(T_1\right)&=R_1(R_1T_1+T_2R_1)-(h_1h_2) \rhd (R_1T_1+T_2R_1)R_1=0,\\
     \operatorname{ad}(R_1)^2(T_2)&=R_1(R_1T_2-T_1R_1)-(h_1h_2) \rhd (R_1T_2-T_1R_1)R_1=0,\\
       \operatorname{ad}(R_2)^2\left(T_1\right)&=R_2(R_2T_1+T_2R_2)-(h_1h_2) \rhd (R_2T_1+T_2R_2)R_2=0,\\
     \operatorname{ad}(R_2)^2(T_2)&=R_2(R_2T_2-T_1R_2)-(h_1h_2) \rhd (R_2T_2-T_1R_2)R_2=0.
	\end{align*} 
 This combines (\ref{C-6.95}) implies 
	$\operatorname{ad}(W_4)^2(W_5)=0$.\\
\par The following identities  can be obtained in this case.
 \begin{align*}
     \operatorname{ad}(W_4)^2(W_6)=&\operatorname{ad}(W_6)^2(W_4)=\operatorname{ad}(W_6)^2(W_5)=\operatorname{ad}(W_5)^2(W_6)=\operatorname{ad}(W_5)^2(W_4)=0.\\
     &\operatorname{ad}(W_4)(W_5)\cong \operatorname{ad}(W_5)(W_4)\cong W_6, \\ &\operatorname{ad}(W_4)(W_6)\cong \operatorname{ad}(W_6)(W_4)\cong W_5, \\ &\operatorname{ad}(W_5)(W_6) \cong \operatorname{ad}(W_6)(W_5)\cong W_4.
 \end{align*}\par
We have listed all cases, which proves this lemma.
\end{proof}

To establish that the tuple $W$ admits all reflections, the following structural observation is essential.
\begin{lemma}\label{C-lem6.12}
    Suppose $W$ admits the reflection sequence $(i_1,i_2,..,i_l)$, where $i_1,..,i_l\in \{1,2,3\}$. Then for $j\in \{1,2,3\}$, 
    \begin{equation}
        \operatorname{deg}(R_{i_l}(...(R_{i_1}(W))...)_j) \neq 1,h_1h_2h_3.
    \end{equation} 
    and $$\prod_{j=1}^3\operatorname{deg}(R_{i_l}(...(R_{i_1}(W))...)_j)=h_1h_2h_3.$$
\end{lemma}
\begin{proof}
    We prove it by induction. For the base case $l=1$, Lemma \ref{C-lem3.15} gives,
    $$ R_1(W)\cong (W_1,W_4,W_5), \ R_2(W)\cong (W_4,W_2,W_6), \ R_3(W)\cong (W_5,W_6,W_3).$$
It is straightforward to verify that for each  $1\leq i,j \leq 3$, $\prod_{j=1}^3R_i(W)=h_1h_2h_3$.\par 
    Now assume, for some $l \geq 2$, that $W$ admits the reflection sequence $(i_1,i_2,..,i_{l-1})$, and $$\prod_{j=1}^3\operatorname{deg}(R_{i_{l-1}}(...(R_{i_1}(W))...)_j)=h_1h_2h_3.$$
    Let us denote $\operatorname{deg}(R_{i_{l-1}}(...(R_{i_1}(W))...)_j)=h_j'$ for each $1\leq j \leq 3$, where $h_j' \in \mathbb{Z}_2 \times \mathbb{Z}_2 \times \mathbb{Z}_2$. Then for any $1\leq i_l \leq 3$, we compute
    \begin{align*}
       \prod_{j=1}^3\operatorname{deg}(R_{i_l}(...(R_{i_1}(W))...)_j)&=(\prod_{k\neq i_l}h_k'h_{i_l})h_{i_l}^{-1}\\
       &=h_1h_2h_3,
    \end{align*}
    which establishes the second claim. \par 
Without loss of generality,  Suppose for contradiction that  $\operatorname{deg}(R_{i_l}(...(R_{i_1}(W))...)_1) =h_1h_2h_3$. We must have $\operatorname{deg}(R_{i_l}(...(R_{i_1}(W))...)_2)=\operatorname{deg}(R_{i_l}(...(R_{i_1}(W))...)_3)=h'$ by the second claim. If $i_l=1$, then 
   $$ R_{i_{l-1}}(...(R_{i_1}(W))...)=(h_1h_2h_3,h_1h_2h_3h',h_1h_2h_3h'),$$ which contradicts the assumption. If $i_1=2$, the degrees become $$R_{i_{l-1}}(...(R_{i_1}(W))...)=(h_1h_2h_3h',h',1),$$
   and if $i_1=3$  $$R_{i_{l-1}}(...(R_{i_1}(W))...)=(h_1h_2h_3h',1,h').$$
  Both of which again lead to contradictions.
\par    A similar contradiction arises if we assume $\operatorname{deg}(R_{i_l}(...(R_{i_1}(W))...)_1) =1$,\\ $\operatorname{deg}(R_{i_l}(...(R_{i_1}(W))...)_2)=k'$, $ \operatorname{deg}(R_{i_l}(...(R_{i_1}(W))...)_3)=k''$, where $k'\cdot k''=h_1h_2h_3$. Now if $i_l=1$, we have
   $$ R_{i_{l-1}}(...(R_{i_1}(W))...)=(1,k',k'').$$
   If $i_l=2$, we obtain
    $$R_{i_{l-1}}(...(R_{i_1}(W))...)=(k',k',k'k''),$$
and if $i_l=3$
    $$R_{i_{l-1}}(...(R_{i_1}(W))...)=(k'',k'k'',k'').$$
    All of these cases lead to contradictions. This completes the induction.
\end{proof}

\begin{proof}[Proof of Theorem \ref{C-thm 4.10}]
We proceed by induction to show that $W$ admits all reflection sequences. Without loss of generality, we first consider the $1$st-reflection.
    By proof of Lemma \ref{C-lem6.10}, for all $1\leq j\leq 3$, we have $R_1(W)_j \in S$. Recall that two tuples $W$ and $W'$ are isomorphic if and only if their corresponding components are isomorphic. 
    $R_1(W)\cong (W_1,W_4,W_5)$. By Lemma \ref{C-lem3.15}, 
    $R_i(R_1(W))\cong R_i(W_1,W_4,W_5)$ for $1\leq i \leq 3$. Applying Lemma  \ref{C-lem6.10} again,  we conclude that $(W_1, W_4, W_5)$ admits the $i$-th reflection by analogous reasoning.\par  
Now, suppose W admits the reflection sequence $(i_1, i_2, \ldots, i_{l-1})$, where $1\leq i_1,...,i_{l-1}\leq 3$. 
Then we have $\operatorname{deg}(R_{i_{l-1}}(...(R_{i_1}(W))...)_j)\neq 1,h_1h_2h_3$ for $1\leq j\leq 3$ and $\prod_{j=1}^3\operatorname{deg}(R_{i_{l-1}}(...(R_{i_1}(W))...)_j)=h_1h_2h_3.$
Consequently, for any $1 \leq m < n \leq 3$, the product
$$\operatorname{deg}(R_{i_{l-1}}(...(R_{i_1}(W))...)_m)\cdot \operatorname{deg}(R_{i_{l-1}}(...(R_{i_1}(W))...)_n)$$ is neither $1$ nor $h_1h_2h_3$.
Therefore $W$ admits the $(i_1,i_2,...,i_{l-1},i_l)$-th reflection by Lemma \ref{C-lem6.10}. This completes the induction and shows that $W$ admits all reflection sequences.\par 
For any $N \in \mathcal{F}_3(W)$, we have $\operatorname{deg}(N_i)\neq 1, h_1h_2h_3$ for $1\leq i\leq 3$ by Lemma \ref{C-lem6.12}. Therefore, $\mathcal{B}(N_i)$ is finite-dimensional since $N_i$ belongs to $S$. Thus $W$ will give rise to a semi-Cartan graph $\mathcal{G}(W)$.\par 
Now for all $N \in \mathcal{F}_3(W)$, Lemma \ref{C-lem6.10} implies that the generalized Cartan matrix $A^N$ is given by
$$\begin{pmatrix}
    2 & -1 & -1\\
    -1 &2 & -1 \\
    -1 & -1 &2
\end{pmatrix}$$
Thus 
$\mathcal{G}(W)$ is a standard semi-Cartan graph.
\end{proof}
\begin{cor}
    The Nichols algebra $\mathcal{B}(W)$ is infinite-dimensional.
\end{cor}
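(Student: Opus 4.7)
The plan is to derive the result as a direct contradiction from Theorem \ref{thm4.6} combined with Theorem \ref{thm 4.10}. The key observation is that Theorem \ref{thm 4.10} has already established two facts: the tuple $W$ admits all reflections (so $\mathcal{G}(W)$ is a well-defined semi-Cartan graph by Theorem \ref{thm5.9}), and this semi-Cartan graph is standard with the common Cartan matrix
\[
A = \begin{pmatrix} 2 & -1 & -1 \\ -1 & 2 & -1 \\ -1 & -1 & 2 \end{pmatrix}
\]
at every vertex. This matrix is the generalized Cartan matrix of affine type $A_2^{(1)}$, which is not of finite type.

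First I would assume for contradiction that $\mathcal{B}(W)$ is finite-dimensional. Then all hypotheses of Theorem \ref{thm4.6} are met: the components $W_1, W_2, W_3$ are irreducible (they lie in $S$, hence are simple), $\mathcal{G}(W)$ is standard, and $\dim \mathcal{B}(W)<\infty$. The theorem forces $A^W = A$ to be a finite Cartan matrix.

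The only remaining thing to justify is that $A$ is not of finite type. This can be seen directly: the determinant computation gives $\det(A) = 0$, so $A$ is degenerate and cannot be a Cartan matrix of finite type (all finite-type Cartan matrices are positive definite). Equivalently, one recognizes $A$ as the Cartan matrix of the affine Kac--Moody algebra $\widetilde{A}_2$. This contradiction proves that $\mathcal{B}(W)$ is infinite-dimensional.

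No step here is expected to be a serious obstacle, since the heavy lifting was already carried out in Lemma \ref{lem6.10}, Lemma \ref{lem6.12}, and Theorem \ref{thm 4.10}, where the reflection sequences and the uniform shape of the Cartan matrices were worked out. The proof of the corollary itself is essentially a one-line application of the finite-type obstruction supplied by Theorem \ref{thm4.6}; the entire point of building the semi-Cartan graph machinery in the coquasi-Hopf setting was precisely to make such obstructions available without computation inside $\mathcal{B}(W)$ itself, thereby bypassing the direct arguments of \cite{huang2024classification}.
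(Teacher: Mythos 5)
Your proposal is correct and follows exactly the same route as the paper: assume $\dim\mathcal{B}(W)<\infty$, invoke Theorem \ref{thm 4.10} to get a standard semi-Cartan graph, apply Theorem \ref{thm4.6} to force $A^W$ to be of finite type, and derive a contradiction since the matrix is the affine $A_2^{(1)}$ Cartan matrix. The only difference is that you explicitly justify the non-finite-type claim via $\det(A)=0$, which the paper asserts without comment.
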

\begin{proof}
   $\mathcal{G}(W)$ is a standard semi-Cartan graph by Theorem \ref{C-thm 4.10}.  Suppose, for contradiction, that $\mathcal{B}(W)$ is finite-dimensional, then $A^W$ is a finite Cartan matrix by Theorem \ref{C-thm4.6}. However 
$$A^W=\begin{pmatrix}
    2 & -1 & -1\\
    -1 &2 & -1 \\
    -1 & -1 &2
\end{pmatrix}$$
is not a finite Cartan matrix. This contradiction implies that  $\mathcal{B}(W)$ must be infinite-dimensional.
\end{proof}

\begin{proof}[Proof of Theorem \ref{C-thm 4.8}]
By Remark \ref{C-rmk 4.7}(1), there is a group surjection satisfying
$$ \pi: G \longrightarrow H,\ g_i \mapsto h_i, \ 1\leq i \leq 3.$$
Furthermore, $\pi^*(\phi)=\Phi$.\par 
We construct such a linear map for $1 \leq j \leq 2$, $$F:V_1\oplus V_2 \oplus V_3 \rightarrow W_1 \oplus W_2 \oplus W_3, \ X_j' \mapsto X_j, Y_j'\mapsto Y_j, Z_j' \mapsto Z_j$$
 A direct verification shows that for each $1\leq i \leq 3$,  $v_i \in V_i$ and $g \in G$, the following compatibilities hold:
\begin{align*}
    (\pi \otimes F)\delta_{V_i}(v_i)&=\delta_{W_i}(F(v_i)),\\
    F(g \rhd v_i)&=\pi(g)\rhd F(v_i).
\end{align*}
By [\citealp{QQG}, Lemma 4.4], we deduce that 
$$ \bB(V) \cong \bB(W)$$ in the sense of [\citealp{QQG}, Definition 4.3]. It follows that $\bB(V)$ is infinite-dimensional.    
\end{proof}
\iffalse\begin{rmk}\rm
    Consider such a tuple $N=(W_1,W_2) \in \mathcal{F}_2$, it is not hard to see that $N$ admits all reflections and 
    $\mathcal{G}(N)$ is a standard finite semi-Cartan graph with Cartan matrix
    $$A^N=\begin{pmatrix}
    2 & -1 \\
    -1 &2 
\end{pmatrix}
\end{rmk}
\fi
\iffalse
\begin{rmk}
    For the sake of simplicity, this article  studies only the cosemisimple pointed coquasi-Hopf algebras under minor conditions. We hope to extend this theory to all coquasi-Hopf algebras with bijective antipodes.
\end{rmk}
\fi

\noindent\textbf{Acknowledgment} This work is supported by the National Key R\&D Program of China\\ 2024YFA1013802 and NSFC 12271243.\\[5pt]

\iffalse
\noindent\textbf{Data availability}
 This manuscript has no associated data.\\[15pt]
 \noindent\textbf{Declarations}\\[7pt]
 \textbf{Conflict of interest}: On behalf of all authors, the corresponding author states that there is no conflict of interest.
 \fi
	\bibliographystyle{plain}\small
	\bibliography{ref}
\end{document}